\newcommand{\thickhline}{%
    \noalign {\ifnum 0=`}\fi \hrule height 1pt
    \futurelet \reserved@a \@xhline
}
\newcolumntype{"}{@{\hskip\tabcolsep\vrule width 1pt\hskip\tabcolsep}}
\def\NC{{\sf{NC}}}
\def\Ab{\widetilde{A}_{B_n}}
\newcommand\bt{{\mathbf{t}}}
\newcommand\bS{{\mathbf{S}}}
\def\lt{{{\ell_T}}}
\newcommand\bs{{\mathbf{s}}}
\definecolor{darkblue}{rgb}{0.0,0,0.7} 
\newcommand{\darkblue}{\color{darkblue}} 
\definecolor{darkred}{rgb}{0.7,0,0} 
\newcommand{\defn}[1]{\emph{\darkblue #1}} 
\def\NC{{\sf{NC}}}
\def\supp{{\sf{supp}}}
\newtheorem{theorem}{Theorem}[section]
\newtheorem{proposition}[theorem]{Proposition}
\newtheorem{lemma}[theorem]{Lemma}
\newtheorem{corollary}[theorem]{Corollary}
\theoremstyle{definition}
\newtheorem{definition}[theorem]{Definition}
\newtheorem{rmq}[theorem]{Remark}
\newtheorem{exple}[theorem]{Example}
\theoremstyle{remark}
\title{Simple dual braids, noncrossing partitions and Mikado braids of type $D_n$}
\author{Barbara Baumeister}
\address{Barbara Baumeister, Fakult\"{a}t f\"{u}r Mathematik,
Universit\"{a}t Bielefeld,
Postfach 10 01 31,
33501 Bielefeld, Germany.}
\email{b.baumeister@math.uni-bielefeld.de}
\author{Thomas Gobet}\thanks{The second author was partially funded by the ANR Geolie ANR-15-CE40-0012}
\address{Thomas Gobet, Institut \'Elie Cartan de Lorraine, Universit\'e  de Lorraine, site de Nancy,
B.P. 70239,
54506 Vandoeuvre-l\`{e}s-Nancy Cedex, France}
\email{thomas.gobet@univ-lorraine.fr}
\begin{document}
\maketitle
\begin{abstract}
We show that the simple elements of the dual Garside structure of an Artin group of type $D_n$ are Mikado braids, giving a positive answer to a conjecture of Digne and the second author. To this end, we use an embedding of the Artin group of type $D_n$ in a suitable quotient of an Artin group of type $B_n$ noticed by Allcock, of which we give a simple algebraic proof here. This allows one to give a characterization of the Mikado braids of type $D_n$ in terms of those of type $B_n$ and also to describe them topologically. Using this topological representation and Athanasiadis and Reiner's model for noncrossing partitions of type $D_n$ which can be used to represent the simple elements, we deduce the above mentioned conjecture. 
\end{abstract}
  
\medskip
~\\
\noindent\textbf{AMS 2010 Mathematics Classification}: : ~20F36, ~20F55.\\

\noindent\textbf{Keywords.} Coxeter groups, Artin-Tits groups, dual braid monoids, Garside theory, noncrossing partitions. 

\tableofcontents
\thispagestyle{empty}

\section{Introduction}

 The dual braid monoid $B_c^*$ of a Coxeter system $(W,S)$ of spherical type was introduced by Bessis \cite{Dual} and depends on the choice of a standard Coxeter element $c\in W$ (a product of all the elements of $S$ in some order). It is generated by a copy $T_c$ of the set $T$ of reflections of $W$, that is, elements which are conjugates to elements of $S$. As a Garside monoid, it embeds into its group of fractions, which was shown by Bessis to be isomorphic to the Artin group $A_W$ 
 corresponding to $W$. Unfortunately, this isomorphism is poorly understood, and the proof of its existence requires a case-by-case argument \cite[Fact 2.2.4]{Dual}. 

The aim of this note is to study properties of the simple elements $\mathrm{Div}(c)$ in $B_c^*$ viewed inside $A_W$ in case $W$ is of type $D_n$ and to show that they are \textit{Mikado braids}, that is, that they can be represented as a quotient of two positive canonical lifts of elements of $W$. These braids appeared in work of Dehornoy \cite{D} in type $A_n$ and in work of Dyer \cite{Dyernil} for arbitrary Coxeter systems and have many interesting properties. For example, they satisfy an analogue of Matsumoto's Lemma in Coxeter groups \cite[Section 9]{Dyernil}. We refer the reader to \cite[Section 9]{Dyernil}, \cite[Section 4]{DG} (there the Mikado braids are called \textit{rational permutation braids}, while the terminology Mikado braids rather refers to braids viewed topologically; it is shown however in \cite{DG} that both are equivalent) or \cite[Section 3.2]{Twisted} for more on the topic. Another important property is that their images in the Iwahori-Hecke algebra $H(W)$ of the Coxeter system $(W,S)$ have positivity properties; let us be more precise. There is a natural group homomorphism $a: A_W\longrightarrow H(W)^\times$. If $\beta\in A_W$ is a Mikado braid and if we express its image $a(\beta)$ in the canonical basis $\{C_w~|~w \in W\}$ of the Hecke algebra, then the coefficients are Laurent polynomials with positive coefficients (see \cite[Section 8]{DG}). This is one of the main motivations for studying Mikado braids, and showing that simple dual braids are Mikado braids. This last property was conjectured for an arbitrary Coxeter system $(W,S)$ of spherical type in \cite{DG}, and shown to hold in all the irreducible types different from $D_n$ \cite[Theorems 5.12, 6.6, 7.1]{DG}. 

In the classical types $A_n$ and $B_n$, the conjecture is proven using a topological characterization of Mikado braids: it can be seen on any reduced braid diagram (resp. symmetric braid diagram in type $B_n$) whether a braid is a Mikado braid or not. The present paper gives topological models for Mikado braids of type $D_n$, similar to those given in types $A_n$ and $B_n$ in \cite{DG}, and solves the above conjecture in the remaining type $D_n$:

\begin{theorem}\label{theorem:main}
Let $c$ be a standard Coxeter element in a Coxeter group $(W, S)$ of type $D_n$. Then every element of $\mathrm{Div}(c)$ is a Mikado braid. 
\end{theorem}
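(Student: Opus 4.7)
The strategy is to push the problem from type $D_n$ to type $B_n$, where a clean topological characterization of Mikado braids is already available from \cite{DG}. The bridge will be the embedding $A_W \hookrightarrow \Ab$ observed by Allcock; my first step is to give a self-contained algebraic proof of this embedding, since everything downstream relies on moving freely between elements of $A_{D_n}$ and representatives inside the relevant quotient of $A_{B_n}$.

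Second, I would establish a dictionary between the two Mikado notions: a braid $\beta \in A_W$ is a Mikado braid (in the sense of being a quotient of two canonical positive lifts of elements of $W$) if and only if it admits, via Allcock's embedding, a representative in $\Ab$ that lifts to a Mikado braid of type $B_n$. Combined with the known topological criterion in type $B_n$, this will yield a topological model for Mikado braids of type $D_n$ as equivalence classes of symmetric braid diagrams in $A_{B_n}$ satisfying the usual visibility/Mikado condition, modulo Allcock's relations.

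Third, I would describe $\mathrm{Div}(c)$ combinatorially. The Bessis correspondence identifies $\mathrm{Div}(c)$ with the noncrossing partition lattice $\NC(W,c)$; for type $D_n$ I would use the Athanasiadis--Reiner planar model of this lattice, in which each noncrossing partition is drawn as a centrally symmetric diagram on a cycle with a distinguished zero block. From such a diagram, the task is to write down, block by block, an explicit positive braid word for the associated simple dual braid, directly in the form of a symmetric diagram inside $\Ab$, generalising the block-by-block recipe known in types $A_n$ and $B_n$.

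The final step is to verify by direct inspection of these diagrams that the topological Mikado criterion is satisfied. The main obstacle, I expect, will be the zero block together with the symmetric pair of ``diameter'' blocks specific to type $D_n$: these configurations are exactly where the Athanasiadis--Reiner model departs from the type $B_n$ picture, and one must check that, after passing to the Allcock quotient, the braid attached to such a block can be straightened to a symmetric Mikado diagram. The non-zero ordinary blocks should contribute exactly as in the $B_n$ case, so once this zero-block analysis is in hand, the theorem follows by combining the Athanasiadis--Reiner description of $\NC(W,c)$ with the topological Mikado criterion transported from type $B_n$.
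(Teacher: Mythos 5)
Your proposal follows essentially the same route as the paper: a direct algebraic proof of Allcock's embedding $A_{D_n}\hookrightarrow\Ab$, the characterization of type $D_n$ Mikado braids as images of those type $B_n$ Mikado braids surjecting onto $W_{D_n}$, the conversion of Athanasiadis--Reiner noncrossing diagrams (with the middle/zero block handled separately) into symmetric braids on $2n$ strands, and the verification of the type $B_n$ topological criterion on those diagrams. The one step you underplay is proving that the diagrammatically defined braid really does represent the simple dual braid $x_c$ in $\Ab$ --- in the paper this is the technical heart, done by checking the dual braid relations along the cover relations of the noncrossing partition lattice --- but your outline does flag this identification as a task to be carried out, so the approach is the same.
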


 As a consequence, every simple dual braid in every spherical type Artin group is a Mikado braid, the reduction to the irreducible case being immediate. Licata and Queffelec recently informed us that they also have a proof of the conjecture in types $A, D, E$ with a different approach using categorification \cite{QL}.

To prove the conjecture, we proceed as follows. Firstly, we explicitly realize the Artin group $A_{D_n}$ of type $D_n$ as an index two subgroup of a quotient of the Artin group $A_{B_n}$ of type $B_n$. The existence of such a realization, which is of independent interest, is not new: it was noticed by Allcock \cite[Section 4]{Allcock}.
We give a simple proof of it here (Proposition~\ref{prop:quotient}). This allows to realize elements of $A_{D_n}$ topologically by Artin braids. We then characterize Mikado braids of type $D_n$ as the images of those Mikado braids of type $B_n$ which surject onto elements of $W_{D_n}\subseteq W_{B_n}$ under the canonical map from $A_{B_n}$ onto $W_{B_n}$ (Theorem~\ref{thm:mikado_bd}). This implies that Mikado braids of type $D_n$ satisfy a nice topological condition, and gives a model for their study in terms of symmetric Artin braids, because elements of $A_{B_n}$ can be realized as symmetric Artin braids on $2n$ strands (see Section~\ref{typeB}). Using Athanasiadis and Reiner's graphical model \cite{AR} for $c$-noncrossing partitions of type $D_n$ (which are in canonical bijections with the simple elements $\mathrm{Div}(c)$ of $B_c^*$; we denote this bijection by $x\mapsto x_c$, where $x$ is a $c$-noncrossing partition), we attach to every such noncrossing partition $x$ an Artin braid $\beta_x$ of type $B_n$, whose image in the above mentioned quotient is precisely the element $x_c\in A_{D_n}$ (Section~\ref{main}). Using the topological characterization of Mikado braids of type $B_n$ from~\cite{DG}, we then prove that $\beta_x$ is a Mikado braid of type $B_n$ (Proposition~\ref{beta_mik}), which concludes by the above mentioned characterization of Mikado braids of type $D_n$ (Theorem~\ref{thm:main}).\\    
~\\
\textbf{Acknowledgments.} We thank Luis Paris for useful discussions with the second author and Jon McCammond for pointing out the reference~\cite{MS}.

\section{Artin groups of type $D_n$ inside quotients of Artin groups of type $B_n$}

\subsection{Coxeter groups and Artin groups}

This section is devoted to recalling basic facts on Coxeter groups and their Artin groups. We refer the reader to  \cite{Bou, Hum} or \cite{BjBr} for more on the topic. A \defn{Coxeter system} $(W,S)$ is a group $W$ generated by a set $S$ of involutions subject to additional \defn{braid relations}, that is, relations of the form $st\cdots = ts\cdots$ for $s,t\in S$, $s\neq t$. Here $st\cdots$ denotes a strictly alternating product of $s$ and $t$, and the number $m_{st}$ of factors in the left hand side equals the number $m_{ts}$ of factors in the right hand side. We have $m_{st}\in\{2, 3,\dots\}\cup\{\infty\}$, the case $m_{st}=\infty$ meaning that there is no relation between $s$ and $t$. Let $\ell:W\rightarrow \mathbb{Z}_{\geq 0}$ be the length function with respect to the set of generators $S$. 

Finite irreducible Coxeter groups are classified in four infinite families of types $A_n$, $B_n$, $D_n$, $I_2(m)$ and six exceptional groups of types $E_6, E_7, E_8, F_4, H_3, H_4$. If $X$ is a given type, we denote by $(W_X, S_X)$ a Coxeter system of this type. 

The \defn{Artin group} $A_W$ attached to the Coxeter system $(W,S)$ is generated by a copy $\bS$ of the elements of $S$, subject only to the braid relations. This gives rise to a canonical surjection $\pi: A_W\twoheadrightarrow W$ induced by $\bs\mapsto s$. If $W$ has type $X$, we simply denote $A_W$ by $A_X$. 

The canonical map $\pi$ has a set-theoretic section $W\hookrightarrow A_W$ built as follows: let $w=s_1 s_2\cdots s_k$ be a reduced expression for $w$, that is, we have $s_i\in S$ for all $i=1,\dots, k$ and $k=\ell(w)$. Then the lift $\bs_1 \bs_2\cdots \bs_k$ in $A_W$ is independent of the chosen reduced expression, and we therefore denote it by $\mathbf{w}$. This is a consequence of the fact that in every Coxeter group, one can pass from any reduced expression of a fixed element $w$ to any other just by applying a sequence of braid relations. The element $\mathbf{w}$ is the \defn{canonical positive lift} of $w$.     

\subsection{Embeddings of Coxeter groups}\label{emb:cox}
Let $(W_{B_n}, S_{B_n})$ be a Coxeter system of type $B_n$. We will identify it with the signed permutations group as follows: let $S_{-n,n}$ be the group of permutations of $[-n,n]=\{-n,-n+1,\dots, -1,1,\dots, n\}$ and define
$$W_{B_n}:=\{w\in S_{-n,n}~|~w(-i)=-w(i),~\mbox{for all}~ i\in[-n,n]\}.$$
Then setting $s_0:=(-1, 1)$ and $s_i=(i, i+1)(-i, -i-1)$ for all $i=1, \dots, n-1$ we get that $S_{B_n}=\{s_0, s_1,\dots, s_{n-1}\}$ is a simple system for $W_{B_n}$ (see \cite[Section 8.1]{BjBr}).

Let $(W_{D_n}, S_{D_n})$ be a Coxeter group of type $D_n$. Recall that $W_{D_n}$ can be realized as an index two subgroup of $W_{B_n}$ as follows: setting $t_0=s_0 s_1 s_0$, $t_i=s_i$ for all $i=1,\dots, n-1$ we have that $S_{D_n}:=\{t_0, t_1,\dots, t_{n-1}\}$ is a simple system for the Coxeter group $W_{D_n}=\left\langle t_0, t_1,\dots, t_{n-1}\right\rangle$ of type $D_n$ (see \cite[Section 8.2]{BjBr}). In the following, a Coxeter group of type $D_n$ will always be viewed inside $W_{B_n}$, with the above identifications. 

\subsection{Embeddings of Artin groups}

We assume the reader to be familiar with Artin groups attached to Coxeter groups and refer to \cite[Chapter IX]{Garside} for basic results. Notice that there are two surjective maps $q_B: A_{B_n}\longrightarrow A_{A_{n-1}}$, $q_D: A_{D_n}\longrightarrow A_{A_{n-1}}$ defined as follows: if we denote by $\{\sigma_1, \dots, \sigma_{n-1}\}$ the set of standard Artin generators of the $n$-strand Artin braid group $A_{A_{n-1}}$, then $q_B(\mathbf{s}_0)=1$, $q_B(\mathbf{s}_i)=\sigma_i$ for $i\neq 0$, while $q_D(\mathbf{t}_0)=\sigma_1$, $q_D(\mathbf{t}_i)=\sigma_i$ for all $i\neq 0$ (see~\cite[Section 2.1]{CP}). Both maps $q_B$ and $q_D$ are split and one can write $A_{X_n}\cong \mathrm{ker}(q_X)\rtimes A_{A_{n-1}}$ for $X\in\{B,D\}$.  

Crisp and Paris showed that the embedding of $W_{D_n}$ in $W_{B_n}$ which we recalled in Subsection~\ref{emb:cox} does not come from an embedding $\varphi: A_{D_n}\longrightarrow A_{B_n}$ such that $q_D=q_B\circ\varphi$ \cite[Proposition 2.6]{CP}. In this section we show that there is an embedding of $A_{D_n}$ inside a quotient $\widetilde{A}_{B_n}$ of $A_{B_n}$; this embedding can be seen as a natural lift of the embedding of Coxeter groups and has the expected properties (see Lemma~\ref{lem:comp}). This is mostly a reformulation of results of Allcock \cite[Sections 2 and 4]{Allcock}, but we will give a simple algebraic proof of this fact here.  

\begin{definition}
Define $\Ab$ to be the quotient of $A_{B_n}$ by the smallest normal subgroup containing $\bs_0^2$. 
\end{definition}

It follows immediately from this definition that the canonical map $\pi_n: A_{B_n}\twoheadrightarrow W_{B_n}$ factors through $\Ab$ via two surjective maps $\pi_{n,1}: A_{B_n}\twoheadrightarrow \Ab$ and $\pi_{n,2}: \Ab\twoheadrightarrow W_{B_n}$. 

\begin{rmq}
In \cite[Definition 3.3]{MS}, a similar group, called the \textit{middle group}, is considered. It is defined as the quotient of $A_{B_n}$ by the smallest normal subgroup containing $\bs_1^2$ (as a consequence, every $\bs_i^2$ for $i\geq 1$ is equal to $1$ in the quotient since $\bs_i$ lies in the same conjugacy class as $\bs_1$).  
\end{rmq}

Denote by $s_i'$, $i=0,\dots, n-1$ the image of $\mathbf{s}_i\in A_{B_n}$ in $\Ab$, for all $s_i\in S_{B_n}$. Set $t_0'= s_0' s_1' s_0'$ and $t_i'= s_i'$ for $i=1, \dots, n-1$. 

\begin{lemma}\label{RelationsDn}
The elements $t_0', t_1',\dots t_{n-1}'$ satisfy the braid relations of type $D_n$, that is, we have $$t_0' t_1'=t_1' t_0', ~t_0' t_2' t_0'= t_2' t_0' t_2', ~t_i ' t_{i+1}' t_i'= t_{i+1}' t_i' t_{i+1}'~\mbox{for all}~ i=1, \dots, n-2,$$  $$t_i't_j'=t_j' t_i'\text{ if }|i-j|>1\text{ and }\{i,j\}\neq\{0, 2\}.$$
\end{lemma}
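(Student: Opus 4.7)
The plan is to verify the four families of relations directly in $\Ab$, using only (a) the defining braid relations of $A_{B_n}$ and (b) the quotient relation $(s_0')^2=1$. Only one of the listed relations actually requires~(b); the others are immediate from~(a).

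For $i,j\geq 1$ we have $t_i'=s_i'$, so the braid relations $t_i't_{i+1}'t_i'=t_{i+1}'t_i't_{i+1}'$ (with $i\geq 1$) and the commutations $t_i't_j'=t_j't_i'$ (with $|i-j|>1$ and $i,j\geq 1$) are just the corresponding type-$A$ relations already present in $A_{B_n}$. The commutation $t_0't_j'=t_j't_0'$ for $j\geq 3$ reduces to showing that $s_j'$ commutes with the word $s_0's_1's_0'$, which is immediate since $s_j'$ commutes with both $s_0'$ and $s_1'$ in $A_{B_n}$ whenever $j\geq 3$. The commutation $t_0't_1'=t_1't_0'$ unfolds to $s_0's_1's_0's_1'=s_1's_0's_1's_0'$, which is precisely the length-$4$ braid relation between $s_0$ and $s_1$ in $A_{B_n}$.

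The only non-trivial case is the braid relation $t_0't_2't_0'=t_2't_0't_2'$, where the quotient relation plays an essential role. The idea is to use $s_0's_2'=s_2's_0'$ to swap the middle $s_2'$ on the left-hand side past one of the flanking $s_0'$'s, producing a factor $(s_0')^2$ that collapses to $1$ in $\Ab$. The resulting word has the form $s_0's_1's_2's_1's_0'$, to which the type-$A$ braid relation $s_1's_2's_1'=s_2's_1's_2'$ applies; two further applications of $s_0's_2'=s_2's_0'$ then produce the right-hand side. This is the only point where the collapsed square $(s_0')^2=1$ is used, and beyond a short symbolic manipulation I do not anticipate any real obstacle.
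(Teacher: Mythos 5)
Your proposal is correct and follows essentially the same route as the paper: the authors likewise dispose of all relations except $t_0't_2't_0'=t_2't_0't_2'$ as immediate consequences of the type $B_n$ braid relations, and verify that remaining one by commuting $s_0'$ past $s_2'$ to create a collapsing $(s_0')^2$, applying $s_1's_2's_1'=s_2's_1's_2'$ to the resulting word $s_0's_1's_2's_1's_0'$, and commuting the outer $s_0'$'s past $s_2'$ to finish. No discrepancies to report.
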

\begin{proof}
All the relations except the second one are immediate consequences of the type $B_n$ braid relations satisfied by the $s_0', s_1', \dots, s_{n-1}'$. For the second relation we have 
\begin{eqnarray*}
t_0' t_2' t_0' &=& s_0' s_1' s_0' s_2' s_0' s_1' s_0'=s_0' s_1' s_0'^2 s_2' s_1' s_0'= s_0' s_1' s_2' s_1' s_0'=s_0' s_2' s_1' s_2' s_0'\\
&=& s_2' s_0' s_1' s_0' s_2'= t_2' t_0' t_2'.
\end{eqnarray*} 

\end{proof}
An immediate corollary is
\begin{corollary}
There is a group homomorphism $\iota_n: A_{D_n}\longrightarrow \Ab$ defined by $\iota_n(\mathbf{t}_i)=t_i'$ for all $i=0, \dots, n-1$. 
\end{corollary}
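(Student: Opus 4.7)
The plan is to invoke the standard universal property presentation of the Artin group $A_{D_n}$. By definition, $A_{D_n}$ is the group generated by the symbols $\mathbf{t}_0,\mathbf{t}_1,\dots,\mathbf{t}_{n-1}$ subject only to the braid relations of type $D_n$, i.e.\ exactly the list of relations displayed in Lemma~\ref{RelationsDn}. Hence to produce a homomorphism out of $A_{D_n}$ into any group $G$, it suffices to exhibit elements of $G$, one for each generator, satisfying those relations; the universal property then yields a unique extension to a homomorphism on all of $A_{D_n}$.

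First I would take $G=\Ab$ and the candidate images $t_0',t_1',\dots,t_{n-1}'\in\Ab$ defined just before Lemma~\ref{RelationsDn}. Lemma~\ref{RelationsDn} asserts precisely that these elements satisfy the full set of type $D_n$ braid relations, so the map $\mathbf{t}_i\mapsto t_i'$ is compatible with all the defining relations of $A_{D_n}$.

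Consequently, there is a well-defined group homomorphism $\iota_n\colon A_{D_n}\longrightarrow\Ab$ with $\iota_n(\mathbf{t}_i)=t_i'$ for $i=0,\dots,n-1$, and it is the unique such homomorphism. There is essentially no obstacle here: the content has already been extracted in Lemma~\ref{RelationsDn}, and the corollary is merely the formal application of the presentation of $A_{D_n}$. The nontrivial question of whether $\iota_n$ is actually \emph{injective}, which is what would justify calling it an embedding, is not part of the statement and would be addressed separately.
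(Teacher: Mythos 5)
Your proposal is correct and is exactly the argument the paper intends: the paper labels this an ``immediate corollary'' of Lemma~\ref{RelationsDn} with no written proof, the implicit justification being precisely the universal property of the presentation of $A_{D_n}$ by generators subject only to the type $D_n$ braid relations. You also correctly note that injectivity is a separate matter, deferred to Proposition~\ref{prop:quotient}.
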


We have the following situation

\begin{lemma}\label{lem:diag}

There is a commutative diagram

$$\xymatrix{
A_{B_n} \ar@{->>}[rd]_{\pi_{n}^B} \ar@{->>}[r]^{\pi_{n,1}} & \relax \Ab  \ar@{->>}[d]^{\pi_{n,2}} & \langle t_0', \dots, t_{n-1}'\rangle \ar@{^{(}->}[l] \ar@{->>}[d]_{\pi_{n}^D} & A_{D_n} \ar@{->>}[l] \\  
	~ & W_{B_n} & W_{D_n} \ar@{^{(}->}[l] & ~
  }$$

\noindent where $\pi_n^D: \langle t_0', \dots, t_{n-1}'\rangle \longrightarrow W_{D_n}$ is defined by $\pi_n^D(t_i')=t_i$ for all $i=0,\dots, n-1$.  

\end{lemma}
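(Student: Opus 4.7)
The plan is to assemble the diagram from pieces that have essentially already been constructed, and verify commutativity by evaluating on generators. First I would address the maps already mentioned in the text: $\pi_{n,1}$ is the canonical quotient map by definition, and $\pi_{n,2}$ exists because $\pi_n^B(\mathbf{s}_0^2) = s_0^2 = 1$ in $W_{B_n}$, so the canonical surjection $\pi_n^B$ vanishes on the normal subgroup of $A_{B_n}$ generated by $\mathbf{s}_0^2$ and hence factors uniquely through $\Ab$. This universal-property argument \emph{simultaneously} yields $\pi_{n,2}$ and the equality $\pi_n^B = \pi_{n,2}\circ\pi_{n,1}$, giving commutativity of the left triangle for free.

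Next I would define $\pi_n^D$ as the restriction of $\pi_{n,2}$ to the subgroup $\langle t_0',\ldots,t_{n-1}'\rangle\subseteq \Ab$. To see that this restriction takes values in $W_{D_n}\subseteq W_{B_n}$ and has the stated effect on generators, I would evaluate directly: $\pi_{n,2}(t_0')=\pi_{n,2}(s_0' s_1' s_0')=s_0 s_1 s_0 = t_0$ (using the identification of Subsection~\ref{emb:cox}), and $\pi_{n,2}(t_i')=s_i=t_i$ for $i\geq 1$. Since $W_{D_n}=\langle t_0,\ldots,t_{n-1}\rangle$, the image of $\pi_n^D$ is precisely $W_{D_n}$, so $\pi_n^D$ is surjective. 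Commutativity of the middle square (i.e.\ that the inclusion $W_{D_n}\hookrightarrow W_{B_n}$ composed with $\pi_n^D$ equals $\pi_{n,2}$ restricted to the subgroup) then holds tautologically, again because $\pi_n^D$ was \emph{defined} as that restriction.

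Finally, the rightmost horizontal map $A_{D_n}\twoheadrightarrow \langle t_0',\ldots,t_{n-1}'\rangle$ is the map $\iota_n$ of the preceding corollary, which is well-defined thanks to Lemma~\ref{RelationsDn} and is surjective onto its image by construction. No further commutativity condition involves this arrow (the bottom right of the xymatrix is empty).

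I do not foresee a real obstacle: the substantive content (the type $D_n$ braid relations among the $t_i'$, and the vanishing of $\bs_0^2$ needed for $\pi_{n,2}$ to exist) has been isolated into Lemma~\ref{RelationsDn} and the definition of $\Ab$. What remains is purely a diagram chase checked on the generators $\mathbf{s}_i$ and $\mathbf{t}_i$.
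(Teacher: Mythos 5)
Your proof is correct and follows essentially the same route as the paper: the existence of $\pi_{n,2}$ and the left triangle come from the universal property of the quotient (noted in the paper right after the definition of $\Ab$), and the only content of the lemma is that the composite $\langle t_0',\dots,t_{n-1}'\rangle\hookrightarrow\Ab\twoheadrightarrow W_{B_n}$ lands in $W_{D_n}$, which you verify on generators exactly as the paper does ($\pi_{n,2}(t_0')=s_0s_1s_0=t_0$, $\pi_{n,2}(t_i')=s_i=t_i$). Nothing is missing.
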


\begin{rmq}
In Proposition~\ref{prop:quotient} below we will show that the map $\iota_n$ is injective; hence $\pi_n^D$ is in fact simply the canonical surjection $A_{D_n}\twoheadrightarrow W_{D_n}$. 
\end{rmq}

\begin{proof}
We have to show that the composition of $\pi_{n,2}$ and $\langle t_0',\dots, t_{n-1}'\rangle\hookrightarrow \Ab$ factors through $W_{D_n}$. It suffices to show that the image of $t_i'$ under this composition is precisely $t_i$ (viewed inside $W_{B_n}$ via the embedding $W_{D_n}\hookrightarrow W_{B_n}$) for all $i=0,\dots, n-1$, which is immediate. 
\end{proof}

\begin{proposition}\label{prop:quotient}
The homomorphism $\iota_n$ is injective and $\langle t_0', \ldots , t_{n-1}'\rangle$ is a subgroup of $\Ab$ of index two. Hence $A_{D_n}$ can be identified with the subgroup of $\Ab$ generated by the $t_i'$, $i=0, \dots, n-1$.  
\end{proposition}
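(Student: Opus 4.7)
The idea is to build an explicit isomorphism $\widetilde{A}_{B_n}\cong A_{D_n}\rtimes \mathbb{Z}/2\mathbb{Z}$, where the nontrivial element $\tau$ of $\mathbb{Z}/2\mathbb{Z}$ acts on $A_{D_n}$ via the diagram automorphism swapping $\mathbf{t}_0$ and $\mathbf{t}_1$ and fixing the other $\mathbf{t}_i$. Once this is established, both assertions of the proposition follow immediately: the inclusion $A_{D_n}\hookrightarrow A_{D_n}\rtimes \mathbb{Z}/2\mathbb{Z}$ is injective and of index $2$, and transporting along the isomorphism identifies this inclusion with $\iota_n$.

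First I would define the candidate inverse $\beta: A_{D_n}\rtimes \langle\tau\rangle \to \widetilde{A}_{B_n}$ by $\mathbf{t}_i\mapsto t_i'$ for all $i$ and $\tau\mapsto s_0'$. This is well defined: the $t_i'$ satisfy the type $D_n$ braid relations by Lemma~\ref{RelationsDn}, the relation $\tau^2=1$ goes to $(s_0')^2=1$, and a direct check shows that conjugation by $s_0'$ on the $t_i'$ matches the action of $\tau$ on the $\mathbf{t}_i$: one has $s_0' t_0' s_0' = s_0'(s_0' s_1' s_0') s_0' = s_1' = t_1'$ and $s_0' t_i' s_0' = t_i'$ for $i\geq 2$ by type $B_n$ commutation. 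By construction the restriction of $\beta$ to $A_{D_n}$ is precisely $\iota_n$.

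Next I would define $\alpha: \widetilde{A}_{B_n}\to A_{D_n}\rtimes \langle\tau\rangle$ by $s_0'\mapsto \tau$, $s_1'\mapsto \mathbf{t}_1$ and $s_i'\mapsto \mathbf{t}_i$ for $i\geq 2$. The main thing to check is that the type $B_n$ braid relations and $(s_0')^2=1$ are preserved. The nontrivial case is the braid relation $s_0 s_1 s_0 s_1 = s_1 s_0 s_1 s_0$, which becomes $\tau\mathbf{t}_1\tau\mathbf{t}_1 = \mathbf{t}_1\tau\mathbf{t}_1\tau$ in the semidirect product; using $\tau^2=1$ and $\tau\mathbf{t}_1\tau^{-1}=\mathbf{t}_0$, both sides simplify to $\mathbf{t}_0\mathbf{t}_1$ and $\mathbf{t}_1\mathbf{t}_0$ respectively, which are equal in $A_{D_n}$. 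The commutations $s_0 s_i=s_i s_0$ for $i\geq 2$ become $\tau\mathbf{t}_i=\mathbf{t}_i\tau$, which holds because $\tau$ fixes $\mathbf{t}_i$; all remaining relations only involve the $s_i'$ for $i\geq 1$ and translate verbatim into true relations between the $\mathbf{t}_i$.

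Finally I would verify that $\alpha$ and $\beta$ are mutually inverse by checking on generators: $\alpha(\beta(\mathbf{t}_0)) = \alpha(s_0' s_1' s_0') = \tau\mathbf{t}_1\tau = \mathbf{t}_0$, $\alpha(\beta(\mathbf{t}_i))=\alpha(s_i')=\mathbf{t}_i$ for $i\geq 1$, $\alpha(\beta(\tau))=\alpha(s_0')=\tau$, and conversely $\beta(\alpha(s_i'))=s_i'$ for each generator (with the case $s_0'$ immediate and the case $s_1'=s_0' t_0' s_0'$ obtained by $\beta(\mathbf{t}_1) = s_1'$). Thus $\alpha$ is an isomorphism, $\iota_n=\beta|_{A_{D_n}}$ is injective, and its image has index $2$, which is exactly the statement of the proposition. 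The main obstacle in the argument is the verification of the single braid relation $s_0 s_1 s_0 s_1 = s_1 s_0 s_1 s_0$ under $\alpha$; it is the only place where the commutativity $\mathbf{t}_0\mathbf{t}_1=\mathbf{t}_1\mathbf{t}_0$ in $A_{D_n}$ is really used, and it explains why passing to the quotient $\widetilde{A}_{B_n}$ (forcing $(s_0')^2=1$) is exactly what is needed to accommodate the embedding.
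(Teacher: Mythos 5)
Your proof is correct, and it takes a more explicit route than the paper's. The paper argues in two stages: first it shows $U=\langle t_0',\dots,t_{n-1}'\rangle$ is proper (via the surjection onto $W_{B_n}$) and normalized by the involution $s_0'$, giving index two, and then it invokes the Reidemeister--Schreier algorithm with transversal $\{1,s_0'\}$ together with Tietze transformations to assert that the resulting presentation of $U$ is exactly the type $D_n$ presentation of Lemma~\ref{RelationsDn}, whence injectivity of $\iota_n$. You instead exhibit an explicit isomorphism $\widetilde{A}_{B_n}\cong A_{D_n}\rtimes\mathbb{Z}/2\mathbb{Z}$ by writing down both maps on generators and checking all defining relations by hand; the only points needing care are that the swap $\mathbf{t}_0\leftrightarrow\mathbf{t}_1$ is a diagram automorphism of $A_{D_n}$ (so the semidirect product makes sense) and that the length-four braid relation $s_0s_1s_0s_1=s_1s_0s_1s_0$ collapses, modulo $\tau^2=1$, to the commutation $\mathbf{t}_0\mathbf{t}_1=\mathbf{t}_1\mathbf{t}_0$ --- both of which you verify. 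The two arguments are close in substance (the Reidemeister--Schreier computation with that transversal is implicitly the same semidirect product decomposition, which is Allcock's original observation), but yours has the advantage of being fully self-contained and verifiable line by line, where the paper leaves the rewriting computation to the reader; it also yields the index-two statement for free from the structure of $A_{D_n}\rtimes\mathbb{Z}/2\mathbb{Z}$ rather than via the separate properness argument through $W_{B_n}$. The paper's method, on the other hand, is the standard machine one would reach for if the complement of $U$ were not visibly a group of order two.
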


\begin{proof}
We first notice that, as an immediate consequence of Lemma~\ref{lem:diag}, the subgroup $U:= \langle t_0', \ldots , t_{n-1}'\rangle\subseteq \Ab$ is proper since $W_{D_n}$ is a proper subgroup of $W_{B_n}$.


As $s'_0$ interchanges $t_0'$ and ${s'_0}t_0's_0'=t_1'$,  and as $s_0'$ commutes with $t_i'$ for $i = 2, \ldots n-1$, the involution $s_0'$ normalizes $U$ and induces on $U$ an automorphism of order $2$ 
(which is in fact an outer automorphism).
Therefore, $U  = \iota_n(A_{D_n})$ is of index $2$ in $\Ab$. 

Next we determine a presentation of $U$ using the Reidemeister-Schreier algorithm (see for instance \cite{LS}). We take as a Schreier-transversal $T:=\{1,  s_0'\}$ for the right cosets of $U$ in $\Ab$.
This yields the generating set $$\{ts_i' \overline{ts_i'}^{-1}~|~ t \in T~\mbox{and}~0 \leq i \leq n-1\} = \{t_i'~|~0 \leq i \leq n-1\}$$
where $\overline{x}$ is the representative of $Ux$ in $T$ for $x \in \Ab$. 
Application of this algorithm and of Tietze-transformations (see \cite{LS})
then precisely yields the braid relations as stated in Lemma~\ref{RelationsDn}. 
This shows that $\iota_n$ is injective.
\end{proof}

From now on we identify the subgroup $\langle t_0', t_1',\dots, t_{n-1}'\rangle\subseteq\Ab$ with $A_{D_n}$ and we set $\mathbf{t}_i = t_i'$ for all $i=0,\dots, n-1$. Note that by definition of $\Ab$, the map $q_B$ factors through $\Ab$, giving rise to a surjection $\widetilde{q}_B: \Ab\longrightarrow A_{A_{n-1}}$. Then we have 

\begin{lemma}\label{lem:comp}
The map $\iota_n$ satisfies $\widetilde{q}_B\circ \iota_n= q_D$.
\end{lemma}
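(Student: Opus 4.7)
The plan is to observe that both maps $\widetilde{q}_B \circ \iota_n$ and $q_D$ are group homomorphisms from $A_{D_n}$ to $A_{A_{n-1}}$, so it is enough to check that they agree on the Artin generators $\mathbf{t}_0, \mathbf{t}_1, \dots, \mathbf{t}_{n-1}$ of $A_{D_n}$. No further machinery is needed; the work has already been done in setting up the diagram of Lemma~\ref{lem:diag} and in noting that $q_B$ factors through $\widetilde{A}_{B_n}$ because $q_B(\mathbf{s}_0) = 1$.

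For an index $i \geq 1$, I would simply unwind the definitions: $\iota_n(\mathbf{t}_i) = t_i' = \pi_{n,1}(\mathbf{s}_i)$, and since $\widetilde{q}_B \circ \pi_{n,1} = q_B$ by definition of $\widetilde{q}_B$, one gets $\widetilde{q}_B(\iota_n(\mathbf{t}_i)) = q_B(\mathbf{s}_i) = \sigma_i = q_D(\mathbf{t}_i)$.

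For $i = 0$, the computation is essentially the same but uses the relation $q_B(\mathbf{s}_0) = 1$ to absorb the outer $s_0'$ factors: $\iota_n(\mathbf{t}_0) = t_0' = s_0' s_1' s_0' = \pi_{n,1}(\mathbf{s}_0 \mathbf{s}_1 \mathbf{s}_0)$, hence $\widetilde{q}_B(\iota_n(\mathbf{t}_0)) = q_B(\mathbf{s}_0)\, q_B(\mathbf{s}_1)\, q_B(\mathbf{s}_0) = 1 \cdot \sigma_1 \cdot 1 = \sigma_1 = q_D(\mathbf{t}_0)$.

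There is no real obstacle: the definition of $\iota_n$ on $\mathbf{t}_0$ was tailored precisely so that its image under $\widetilde{q}_B$ collapses (via killing the squared $s_0'$ factors implicitly, here only needing that a single $s_0'$ kills under $\widetilde{q}_B$) to $\sigma_1$, matching $q_D$. The only thing worth emphasizing in the write-up is that the factorization $\widetilde{q}_B$ is well-defined because $\mathbf{s}_0^2 \in \ker q_B$, which ensures $q_B$ descends to $\widetilde{A}_{B_n}$.
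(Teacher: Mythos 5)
Your proposal is correct and is essentially identical to the paper's own proof: both reduce to checking the identity on the generators $\mathbf{t}_0,\dots,\mathbf{t}_{n-1}$, use that $\widetilde{q}_B$ is well-defined because $\mathbf{s}_0^2\in\ker q_B$, and compute $\widetilde{q}_B(s_0's_1's_0')=q_B(\mathbf{s}_0)q_B(\mathbf{s}_1)q_B(\mathbf{s}_0)=\sigma_1$ together with $\widetilde{q}_B(s_i')=q_B(\mathbf{s}_i)=\sigma_i$ for $i\geq 1$. No gaps.
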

\begin{proof}
We have $q_D(\mathbf{t}_0)= \sigma_1$ and $(\widetilde{q}_B\circ \iota_n)(\mathbf{t}_0)= \widetilde{q}_B(s_0' s_1' s_0')= q_B(\bs_0) q_B (\bs_1) q_B(\bs_0)= q_B(\bs_1)=\sigma_1$. For $i\geq 1$ we have $q_D(\bt_i)=\sigma_i= q_B(\bs_i)= \widetilde{q}_B(s_i')=(\widetilde{q}_B\circ \iota_n)(\bt_i)$.  
\end{proof}

\begin{definition} Given $x\in W_{D_n}$, we denote by $\mathbf{x}^D$ the canonical positive lift of $x$ in $A_{D_n}$ (which we will systematically view inside $\Ab$) and by $\mathbf{x}^B$ the canonical positive lift of $x$ in $A_{B_n}$.
\end{definition}

\begin{proposition}\label{prop:lifts}
Let $x\in W_{D_n}$. We have $\pi_{n,1}(\mathbf{x}^B)=\mathbf{x}^D$. 
\end{proposition}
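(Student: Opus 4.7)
The plan is to proceed by induction on $\ell_B(x)$. The base case $x = 1$ will be trivial, so assume $x \neq 1$ and pick a left $B_n$-descent $s_i$ of $x$. I will split into two cases depending on whether some $s_i$ with $i \geq 1$ is a $B_n$-descent, or the only $B_n$-descent is $s_0$.

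In the first case, some $s_i$ with $i \geq 1$ is a $B_n$-descent. Then $s_i = t_i$ lies in $W_{D_n}$, and $s_i x \in W_{D_n}$. I would first establish a descent-compatibility lemma: for $x \in W_{D_n}$ and $i \geq 1$, $\ell_B(s_i x) < \ell_B(x)$ if and only if $\ell_D(t_i x) < \ell_D(x)$. This follows from $\alpha_i^B = \alpha_i^D$ for $i \geq 1$ together with the fact that $W_{D_n}$ preserves the long roots of $\Phi_B$, which coincide with the roots of $\Phi_D$. Writing $y = s_i x \in W_{D_n}$, one then has $\mathbf{x}^B = \mathbf{s}_i \cdot \mathbf{y}^B$ in $A_{B_n}$ and $\mathbf{x}^D = \mathbf{t}_i \cdot \mathbf{y}^D$ in $A_{D_n}$; applying $\pi_{n,1}$ and the induction hypothesis to $y$ gives $\pi_{n,1}(\mathbf{x}^B) = s_i' \cdot \mathbf{y}^D = t_i' \cdot \mathbf{y}^D = \mathbf{x}^D$.

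In the remaining case the unique $B_n$-descent of $x$ is $s_0$; by the lemma above this forces $t_0$ to be the unique $D_n$-descent of $x$. The central claim will be that $\ell_B(t_0 x) = \ell_B(x) - 3$; granting this, one has $\mathbf{x}^B = \mathbf{s}_0 \mathbf{s}_1 \mathbf{s}_0 \cdot \mathbf{z}^B$ where $z = t_0 x$, and the induction hypothesis applied to $z$ then gives $\pi_{n,1}(\mathbf{x}^B) = s_0' s_1' s_0' \cdot \mathbf{z}^D = t_0' \cdot \mathbf{z}^D = \mathbf{x}^D$. To establish the length drop, I would use the signed-permutation realization of $W_{B_n}$ together with the classical formula $\ell_B(x) = \ell_D(x) + \mathrm{neg}(x)$, where $\mathrm{neg}(x)$ denotes the number of negative entries in the one-line notation of $x$. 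The hypotheses of this case will force a ``decreasing-then-increasing'' pattern on the sequence $x^{-1}(e_1), \ldots, x^{-1}(e_n)$: all negatives appear first (say $k$ of them, with decreasing absolute values), followed by all positives (with increasing absolute values). Since $x \in W_{D_n}$ forces $k = \mathrm{neg}(x)$ to be even and the $s_0$-descent gives $k \geq 1$, we will have $k \geq 2$, so in particular $x^{-1}(1), x^{-1}(2) < 0$. Because $t_0 = (1, -2)(-1, 2)$ flips the sign of $x$ at exactly the two positions $|x^{-1}(1)|$ and $|x^{-1}(2)|$, both of which turn from negative to positive, we will obtain $\mathrm{neg}(t_0 x) = \mathrm{neg}(x) - 2$; combined with $\ell_D(t_0 x) = \ell_D(x) - 1$, this yields $\ell_B(t_0 x) = \ell_B(x) - 3$.

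The main obstacle will be the length-drop calculation in the second case, for which the signed-permutation bookkeeping above is essential. The first case and the overall inductive framework are routine once the descent-compatibility lemma between $\ell_B$ and $\ell_D$ is in place.
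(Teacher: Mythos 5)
Your proof is correct, but it follows a genuinely different route from the paper's. The paper inducts on $\ell_D(x)$: it takes an arbitrary $S_{D_n}$-reduced word for $x$, substitutes $t_0\mapsto s_0s_1s_0$ and $t_i\mapsto s_i$, and shows the resulting word can be turned into a $B_n$-reduced word using only type $B_n$ braid moves and deletions of $s_0^2$ (operations that preserve the image in $\widetilde{A}_{B_n}$); the delicate step there is a case analysis, via the lifting property and Matsumoto's lemma, on whether $s_0$ and $s_1$ are left descents of the suffix. You instead induct on $\ell_B(x)$, peeling off a left descent $s_i$ with $i\geq 1$ whenever one exists (where the two length functions drop together by the descent criteria of Bj\"orner--Brenti, Propositions 8.1.2 and 8.2.2), and reserving $t_0$ for the case where $s_0$ is the unique $B_n$-descent. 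Your key computation checks out: in that case the window of $x^{-1}$ is increasing with $x^{-1}(1)<x^{-1}(2)<0$ (the second inequality because $\mathrm{neg}(x)$ is even and positive), so left-multiplying by $t_0=(1,-2)(-1,2)$ turns exactly the two negative entries $-1$ and $-2$ of the window of $x$ into positive ones, whence $\mathrm{neg}(t_0x)=\mathrm{neg}(x)-2$ and, by $\ell_B=\ell_D+\mathrm{neg}$ on $W_{D_n}$, $\ell_B(t_0x)=\ell_B(x)-3$. This buys you something the paper's argument does not make explicit: the factorization $\mathbf{x}^B=\mathbf{s}_0\mathbf{s}_1\mathbf{s}_0\,\mathbf{z}^B$ holds already in $A_{B_n}$, so your induction exhibits a particular $D_n$-reduced word for $x$ whose substituted $B_n$-word is already reduced, and the relation $\mathbf{s}_0^2=1$ of the quotient is never needed in the proof itself. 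The paper's version, conversely, establishes the Hoefsmit-style statement that \emph{every} $D_n$-reduced word behaves well after substitution, at the cost of a more intricate word-combinatorial argument. Both are complete proofs of the proposition.
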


\begin{proof}
Let $t_{i_1} t_{i_2}\cdots t_{i_k}$ be an $S_{D_n}$-reduced expression of $x$ in $W_{D_n}$. Replacing $t_0$ by $s_0 s_1 s_0$ and $t_i$ by $s_i$ for $i=1,\dots, n-1$ we get a word in the elements of $S_{B_n}$ for $x$. Note that this may not be a reduced expression for $x$ in $W_{B_n}$. It suffices to show that one can transform the above word into a reduced expression for $x$ in $W_{B_n}$ just by applying braid relations of type $B_n$ and the relation $s_0^2=1$.

We prove the above statement by induction on $k$. If $k=1$ then the claim holds since $t_i$, $i\geq 1$ is replaced by $s_i$ while $t_0$ is replaced by $s_0 s_1 s_0$ which is $S_{B_n}$-reduced. Hence assume that $k>1$. By induction the claim holds for $x'=t_{i_2}\cdots t_{i_k}$. By \cite[Propositions 8.1.2, 8.2.2]{BjBr} one has that $s_j$, $j\geq 1$ is a left descend of $x'$ in $W_{B_n}$ if and only if it is a left descent of $x'$ in $W_{D_n}$. Hence we can assume that $t_{i_1}=t_0$ and that it is the only left descent of $x$ in $W_{D_n}$.

Firstly, assume that $s_0$ is a left descent of $x'$ in $W_{B_n}$, hence $s_0$ is not a left descent of $s_0 x'$. We claim that it suffices to show that $s_1$ is not a left descent of $s_0 x'$: indeed, it implies that $\ell(s_0 s_1 s_0 x')=\ell(s_0 x')+2$ (where $\ell$ is the length function in $W_{B_n}$) by the lifting property (see~\cite[Corollary 2.2.8(i)]{BjBr}). Moreover by induction we can get every $S_{B_n}$-reduced decomposition of $x'$ using only the claimed relations, hence we can by induction get a reduced expression for $x'$ starting with $s_0$ with these relations. The only additional relation to apply to get a reduced decomposition of $x$ is the deletion of the $s_0^2=1$ which appears when appending $s_0 s_1 s_0$ at the left of such a reduced expression of $x'$. Hence assume that $s_1 s_0 x' < s_0 x'$ in $W_{B_n}$, i.e., that $s_1$ is a left descent of $s_0 x'$. By~\cite[Proposition 8.1.2]{BjBr} it follows that $x'^{-1} s_0 (1)>x'^{-1} s_0 (2)$ which implies that $x'^{-1}(-1) >x'^{-1}(2)$, hence $-x'^{-1}(2)>x'^{-1}(1)$. But by~\cite[Proposition 8.2.2]{BjBr} it precisely means that $t_0$ is a left descent of $x'$, a contradiction.    

Now assume that $s_0$ is not a left descent of $x'$ in $W_{B_n}$. Then $s_1$ is not a left descent of $x'$ in $W_{B_n}$, otherwise using \cite[Proposition 8.1.2]{BjBr} again it would be a left descent of $t_{i_1} x'$ in $W_{B_n}$, hence in $W_{D_n}$ by \cite[Proposition 8.2.2]{BjBr}, a contradiction. It follows that a reduced expression for $y=s_1 s_0 x'$ in $W_{B_n}$ is obtained by concatenating $s_1 s_0$ at the left of a reduced expression for $x'$ (which we can obtain by induction). If $s_0 y > y$ then we are done, while if $s_0 y < y$ then by Matsumoto's Lemma we can obtain a reduced expression of $y$ starting with $s_0$ just by applying type $B_n$ braid relations. Deleting the $s_0^2$ at the beginning of the word we then have a reduced expression of $x$.  
   
\end{proof}

\begin{rmq}

The fact that reduced expressions of an element $x\in W_{D_n}$ can be transformed into reduced expressions in $W_{B_n}$ as we did in the proof above had been noticed by Hoefsmit in his thesis~\cite[Section 2.3]{Hoef} without a proof. The fact that $A_{D_n }$ can be realized as a subgroup of $\Ab$ also implies that the corresponding Iwahori-Hecke algebra $H(W_{D_n})$ of type $D_n$ embeds into the two-parameter Iwahori-Hecke algebra $H(W_{B_n})$ of type $B_n$ where the parameter corresponding to the conjugacy class of $s_0$ is specialized at $1$. This is precisely what Hoefsmit uses to study representations of Iwahori-Hecke algebras of type $D_n$ using the representation theory of those algebras in type $B_n$.  

\end{rmq}

\section{Mikado braids of type $B_n$ and $D_n$}

\subsection{Mikado braids of type $B_n$}\label{typeB}

We recall from \cite{DG} the following

\begin{definition}
Let $(W,S)$ be a finite Coxeter system with Artin group $A_W$. An element $\beta\in A_W$ is a \defn{Mikado braid} if there exist $x,y\in W$ such that $\beta=\mathbf{x}^{-1}\mathbf{y}$. We denote by $\mathrm{Mik}(W)$ (or $\mathrm{Mik}(X)$ if $W$ is of type $X$) the set of Mikado braids in $A_W$. 
\end{definition}

We briefly recall results from \cite[Section 6.2]{DG} on topological realizations of Mikado braids in type $B_n$ which will be needed later on. The Artin group $A_{B_n}$ embeds into $A_{A_{2n-1}}$, which is isomorphic to the Artin braid group on $2n$ strands. Labeling the strands by $-n, \dots, -1, 1, \dots, n$, every simple generator in $S_{B_n}\subseteq S_{n, -n}$ is then lifted to an Artin braid as follows. The generator $\bs_0$ exchanges the strands $1$ and $-1$, while the generator $\bs_i$, $i=1,\dots, n-1$ exchanges the strands $i$ and $i+1$ as well as the strands $-i$ and $-i-1$ (in both crossings, the strand coming from the right passes over the strand coming from the left, like in the right picture in Figure~\ref{figure:ref1}). Those braids in $A_{A_{2n-1}}$ which are in $A_{B_n}$ are precisely those braids which are fixed by the automorphism which exchanges each crossing $i, i+1$ by a crossing $-i, -i-1$ of the same type, for all $i$. We call these braids \defn{symmetric}.  

There is the following graphical characterization of Mikado braids in $A_{B_n}$

\begin{theorem}[{\cite[Theorem 6.3]{DG}}]\label{thm:dg_b}
Let $\beta\in A_{B_n}$. The following are equivalent
\begin{enumerate}
\item The braid $\beta$ is a Mikado braid, that is, there are $x, y\in W_{B_n}$ such that $\beta=\mathbf{x}^{-1}\mathbf{y}$.
\item There is an Artin braid in $A_{A_{2n-1}}$ representing $\beta$, such that one can inductively remove pairs of symmetric strands, one of the two strands being above all the other strands (so that the symmetric one is under all the other strands).  
\end{enumerate}
\end{theorem}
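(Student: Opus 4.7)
The plan is to prove the two implications separately, using as input the analogous characterization of Mikado braids in type $A_{2n-1}$ established in \cite[Section 4]{DG}: a braid $\beta \in A_{A_{2n-1}}$ is a Mikado braid in type $A$ if and only if it admits a $2n$-strand Artin diagram from which strands can be peeled off one by one, each time removing a strand that lies above all of the other strands.

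For the implication $(1) \Rightarrow (2)$, given $\beta = \mathbf{x}^{-1}\mathbf{y}$ with $x,y \in W_{B_n}$, I would first view $\beta$ as a symmetric braid in $A_{A_{2n-1}}$ via the embedding recalled in the subsection above. Since $x,y \in W_{B_n} \subseteq W_{A_{2n-1}}$, the braid $\beta$ is a Mikado braid in type $A$, so applying the type $A$ characterization yields a $2n$-strand diagram where strands can be peeled off one at a time. The additional work is to arrange this peeling in symmetric pairs. For this I would rely on the following observation: in any symmetric diagram, a strand labeled $k>0$ can be isotoped to lie above all of the other strands if and only if its mirror $-k$ can be isotoped to lie below all of them (apply the $B_n$-symmetry of the ambient cylinder to any such isotopy). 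Hence whenever a single strand can be peeled off the top, its mirror can be simultaneously peeled off the bottom, yielding the required symmetric pair.

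For the converse $(2) \Rightarrow (1)$, I would argue by induction on $n$. Given a symmetric diagram $D$ for $\beta$ admitting the removal of a symmetric pair $(k,-k)$ with $k$ above everything (and thus $-k$ below everything), let $\beta'$ denote the braid obtained by removing this pair. Then $\beta'$ is a symmetric braid on $2(n-1)$ strands, hence lies in $A_{B_{n-1}}$, and by the induction hypothesis $\beta' = (\mathbf{x}')^{-1}\mathbf{y}'$ for some $x',y' \in W_{B_{n-1}}$. To recover an expression for $\beta$ of the desired form, I would track the crossings made by strand $k$ (and, symmetrically, by strand $-k$) with the remaining strands: since $k$ lies strictly above everything, these crossings collect cleanly at the top and at the bottom of the diagram for $\beta'$. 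Bundled with the mirrored contributions of $-k$, they assemble into symmetric positive lifts of two signed permutations $u,v \in W_{B_n}$ whose non-trivial support only involves $\pm k$ and some of the other indices, giving
\[
\beta = \mathbf{u}^{-1}(\mathbf{x}')^{-1}\mathbf{y}'\mathbf{v} = \mathbf{x}^{-1}\mathbf{y},
\]
where $x:=x'u$ and $y:=y'v$ (viewing $x',y'$ in $W_{B_n}$ via $W_{B_{n-1}} \subseteq W_{B_n}$), provided one verifies the length-additivity $\ell_{W_{B_n}}(x'u) = \ell_{W_{B_{n-1}}}(x') + \ell_{W_{B_n}}(u)$ and the analogous identity for $y$.

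The main obstacle is the second direction, and within it the length-additivity verification: one must show that the concatenation of a reduced expression of $x'$ with a reduced expression of $u$ remains reduced in $W_{B_n}$, so that the product of canonical positive lifts coincides with the canonical positive lift of the product. This should ultimately follow from the topological hypothesis that strand $k$ lies above all other strands in $D$, which geometrically prevents any crossing of the inserted symmetric pair from duplicating a crossing already present in $D$ minus the pair $\{k,-k\}$. Transcribing this geometric non-duplication into a length statement in $W_{B_n}$ (while keeping track of the symmetry so that $u$ and $v$ land in $W_{B_n}$ rather than merely in the larger symmetric group $W_{A_{2n-1}}$) is the technical heart of the argument.
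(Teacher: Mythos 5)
First, a point of order: the paper does not prove this statement at all --- it is imported verbatim as \cite[Theorem 6.3]{DG} and used as a black box, so there is no in-paper proof to compare against. Your overall strategy (view $\beta$ as a symmetric braid in $A_{A_{2n-1}}$, invoke the type $A$ topological characterization of Mikado braids, and use the half-turn symmetry to organize the peeling into symmetric pairs) is indeed the route taken in \cite{DG}, so the plan is sound. However, as written the argument has genuine gaps in both directions.

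In $(1)\Rightarrow(2)$ you silently use that the canonical positive lift $\mathbf{x}$ of $x\in W_{B_n}$, computed with respect to $S_{B_n}$ and then pushed through the embedding $A_{B_n}\hookrightarrow A_{A_{2n-1}}$, coincides with the canonical positive lift of $x$ viewed as an element of $W_{A_{2n-1}}=\mathfrak{S}_{2n}$. Without this, the image of $\mathbf{x}^{-1}\mathbf{y}$ need not be a quotient of two type $A$ canonical lifts, and the type $A$ characterization cannot be applied. This compatibility is true (a $B_n$-reduced word maps to an $A_{2n-1}$-reduced word under the substitution $\bs_0\mapsto\sigma$, $\bs_i\mapsto\sigma\sigma'$), but it is precisely the kind of statement that must be proved, not assumed; it is established separately in \cite{DG} before their Theorem 6.3. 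The symmetric-pairing observation itself (strand $k$ liftable to the top iff strand $-k$ pushable to the bottom) is correct, though you should state the type $A$ characterization in its two-sided form (a strand removable from above \emph{or} below) for the pairing to go through.

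In $(2)\Rightarrow(1)$ the proof is not complete: you yourself defer the reassembly step --- showing that the crossings of the removed pair bundle into lifts $\mathbf{u},\mathbf{v}$ of elements of $W_{B_n}$ and that $\mathbf{u}^{-1}(\mathbf{x}')^{-1}=\mathbf{x}^{-1}$ with $\ell(x'u)=\ell(x')+\ell(u)$ --- and this is exactly where all the work lies. There is an additional unaddressed wrinkle: after removing the pair $\{k,-k\}$ with $k\neq n$, the copy of $W_{B_{n-1}}$ containing $x',y'$ sits inside $W_{B_n}$ as a \emph{non-standard} parabolic subgroup, so even the claim that the canonical lift of $x'$ in $A_{B_{n-1}}$ equals the canonical lift of $x'$ viewed in $W_{B_n}$ requires justification (or one must first argue that the removable pair can be taken to be the outermost one, which itself costs multiplication by lifts on both sides). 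As it stands the second implication is a plan, not a proof.
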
 
Note that in the second item above, we remove pairs of strands instead of single strands so that at each step of the process, the obtained braid is still symmetric (hence in $A_{B_n}$). 

\subsection{Mikado braids of type $D_n$ inside $\Ab$}

The aim of this subsection is to prove the following result, relating Mikado braids of type $D_n$ to Mikado braids of type $B_n$:

\begin{theorem}\label{thm:mikado_bd}
The Mikado braids of type $D_n$ viewed inside $\Ab$ are precisely the images of those Mikado braids of type $B_n$ which surject onto elements of $W_{D_n}$, that is, we have $$\mathrm{Mik}(D_n)=\{ \pi_{n,1}(\beta)~|~\beta\in\mathrm{Mik}(B_n)~\text{and}~\pi_n^B(\beta)\in W_{D_n}\}.$$
\end{theorem}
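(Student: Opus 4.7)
\medskip

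\noindent\textbf{Proof proposal.} I will prove the two inclusions separately. The inclusion $\supseteq$ in the easy direction: given $\gamma = (\mathbf{x}^D)^{-1}\mathbf{y}^D \in \mathrm{Mik}(D_n)$ with $x,y\in W_{D_n}$, I set $\beta := (\mathbf{x}^B)^{-1}\mathbf{y}^B\in A_{B_n}$. Then $\beta\in\mathrm{Mik}(B_n)$ by definition, $\pi_n^B(\beta) = x^{-1}y\in W_{D_n}$, and by Proposition~\ref{prop:lifts} we get $\pi_{n,1}(\beta) = (\mathbf{x}^D)^{-1}\mathbf{y}^D = \gamma$.

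The nontrivial direction is $\subseteq$. Let $\beta = (\mathbf{x}^B)^{-1}\mathbf{y}^B \in \mathrm{Mik}(B_n)$ with $\pi_n^B(\beta) = x^{-1}y \in W_{D_n}$. Since $W_{D_n}$ is an index-two (hence normal) subgroup of $W_{B_n}$, the condition $x^{-1}y\in W_{D_n}$ forces $x$ and $y$ to lie in the same coset of $W_{D_n}$; so either both are in $W_{D_n}$ or neither is. In the first case, Proposition~\ref{prop:lifts} immediately gives $\pi_{n,1}(\beta) = (\mathbf{x}^D)^{-1}\mathbf{y}^D \in \mathrm{Mik}(D_n)$. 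In the second case, both $s_0 x$ and $s_0 y$ lie in $W_{D_n}$, and my plan is to replace $(x,y)$ by $(s_0 x, s_0 y)$ inside $\Ab$.

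The key technical lemma I will establish first is the following: for every $x\in W_{B_n}$,
\[
\pi_{n,1}(\mathbf{s_0 x}^B) = s_0'\cdot \pi_{n,1}(\mathbf{x}^B) \qquad\text{in } \Ab.
\]
This will be proved by a short case analysis on $\ell(s_0 x)$. If $\ell(s_0 x) = \ell(x)+1$, then concatenating $\bs_0$ to the left of a reduced expression for $x$ yields a reduced expression for $s_0 x$, so $\mathbf{s_0 x}^B = \bs_0\mathbf{x}^B$ already in $A_{B_n}$ and the claim is immediate. If $\ell(s_0 x)=\ell(x)-1$, I set $x' := s_0 x$ and apply the previous case to $x'$ to get $\mathbf{x}^B = \bs_0\mathbf{x'}^B$, so $\pi_{n,1}(\mathbf{x}^B) = s_0'\pi_{n,1}(\mathbf{x'}^B)$; using $s_0'^2 = 1$ in $\Ab$ then gives $\pi_{n,1}(\mathbf{s_0 x}^B) = \pi_{n,1}(\mathbf{x'}^B) = s_0'\cdot\pi_{n,1}(\mathbf{x}^B)$.

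Granted the lemma, the second case is finished as follows. Setting $x' := s_0 x$, $y' := s_0 y \in W_{D_n}$, the lemma and Proposition~\ref{prop:lifts} give $\pi_{n,1}(\mathbf{x}^B) = s_0'\mathbf{x'}^D$ and $\pi_{n,1}(\mathbf{y}^B) = s_0'\mathbf{y'}^D$; hence
\[
\pi_{n,1}(\beta) = (s_0'\mathbf{x'}^D)^{-1}(s_0'\mathbf{y'}^D) = (\mathbf{x'}^D)^{-1}\mathbf{y'}^D \in \mathrm{Mik}(D_n).
\]
The main (and only) obstacle is the auxiliary lemma, and as sketched it reduces to an elementary length-based argument exploiting the defining relation $s_0'^2 = 1$ of $\Ab$; everything else is routine bookkeeping with Proposition~\ref{prop:lifts} and the coset structure of $W_{D_n}\subseteq W_{B_n}$.
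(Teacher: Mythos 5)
Your proposal is correct and follows essentially the same route as the paper: both directions rest on Proposition~\ref{prop:lifts}, and the nontrivial inclusion uses the index-two coset structure of $W_{D_n}$ in $W_{B_n}$ together with the relation $s_0'^2=1$ in $\Ab$ to replace $(x,y)$ by $(s_0x,s_0y)$. Your auxiliary lemma $\pi_{n,1}(\mathbf{s_0x}^B)=s_0'\,\pi_{n,1}(\mathbf{x}^B)$ is just a cleanly packaged version of the paper's observation that $\mathbf{x}^B=\mathbf{s}_0^{\pm1}\mathbf{x'}^B$ with the sign depending on $\ell(s_0x)$ versus $\ell(x)$.
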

\begin{proof}
Let $\gamma\in\mathrm{Mik}(D_n)\subseteq\Ab$. Then there exist $x,y\in W_{D_n}$ such that $\gamma=(\mathbf{x}^D)^{-1}\mathbf{y}^D$. Note that by Lemma~\ref{lem:diag} we have $\pi_{n,2}(\gamma)=x^{-1} y\in W_{D_n}$. But by Proposition~\ref{prop:lifts} we have $\gamma=\pi_{n,1}(\beta)$ where $\beta=(\mathbf{x}^B)^{-1}\mathbf{y}^B\in\mathrm{Mik}(B_n)$, which shows the first inclusion.

Conversely, let $\beta\in \mathrm{Mik}(B_n)$ such that $\pi_{n}^B(\beta)\in W_{D_n}$. We have to show that $\pi_{n,1}(\beta)\in\mathrm{Mik}(D_n)$. By definition there are $x,y\in W_{B_n}$ such that $\beta=(\mathbf{x}^B)^{-1} \mathbf{y}^B$. Since $\pi_n^B(\beta)=x^{-1}y\in W_{D_n}$, if either $x$ or $y$ is in $W_{D_n}$ then both of them are in $W_{D_n}$ in which case we are done by Proposition~\ref{prop:lifts}. Hence assume that $x, y\notin W_{D_n}$. Since $W_{D_n}$ is a subgroup of $W_{B_n}$ of index two and $s_0\notin W_{D_n}$ there are $x', y'\in W_{D_n}$ such that $x=s_0 x'$, $y=s_0 y'$. If follows that $\mathbf{x}^B= \mathbf{s}_0^{\pm 1} \mathbf{x'}^B$ (the exponent depending on whether $s_0 x > x$ or not) and $\mathbf{y}^B=\mathbf{s}_0^{\pm 1} \mathbf{y'}^B$. Hence since the image of $\mathbf{s}_0$ in $\Ab$ has order two, using Proposition~\ref{prop:lifts} again we have $\pi_{n,1}(\beta)=(\mathbf{x'}^D)^{-1} \mathbf{y'}^D$ which concludes. 

\end{proof}

\section{Dual braid monoids}

\subsection{Noncrossing partitions}Let $(W,S)$ be a Coxeter system of spherical type. Let $T=\bigcup_{w\in W} w S w^{-1}$ denote the set of reflections in $W$ and $\lt:W\longrightarrow \mathbb{Z}_{\geq 0}$ the corresponding length function. A \defn{standard Coxeter element} in $(W,S)$ is a product of all the elements of $S$. Given $u,v\in W$, we can define a partial order $\leq_T$ on $W$ by $$u\leq_T v\Leftrightarrow \lt(u)+\lt(u^{-1}v)=\lt(v).$$ 
In this case we say that $u$ is a \defn{prefix} of $v$. 

Let $c$ be a standard Coxeter element. The set $\NC(W, c)$ of \defn{c-noncrossing partitions} consists of all the $x\in W$ such that $x\leq_T c$. The poset $(\NC(W,c),\leq_T)$ is a lattice, isomorphic to the lattice of noncrossing partitions when $W= W_{A_n} \cong \mathfrak{S}_{n+1}$. See~\cite{Arm} for more on the topic.

\begin{rmq}
There are several (unequivalent) definitions of Coxeter elements (see for instance~\cite[Section 2.2]{BGRW}). The above definitions still make sense for more general Coxeter elements, but for the realization of the dual braid monoids (which are introduced in the next section) inside Artin groups the Coxeter element is required to be standard (see~\cite[Remark 5.11]{DG}).
\end{rmq}

\subsection{Dual braid monoids}\label{Sub:DualBraid}

We recall the definition and properties of dual braid monoids. For a detailed introduction to the topic the reader is referred to \cite{Dual,DG} or \cite{Garside}. Dual braid monoids were introduced by Bessis \cite{Dual}, generalizing definitions of Birman, Ko and Lee \cite{BKL} and Bessis, Digne and Michel \cite{BDM} to all the spherical types. Let $(W, S)$ be a finite Coxeter system. Denote by $T$ the set of reflections in $W$ and by $A_W$ the corresponding Artin-Tits group. Let $c$ be a standard Coxeter element in $W$. Bessis 
defined the \defn{dual braid monoid} attached to the triple $(W, T, c)$ as follows. Take as generating set a copy  $T_c:= \{t_c~|~t \in T\}$ of $T$ and set 
$$B_c^*:=\langle t_c\in T_c~ |~t_c \in T_c, t_c t'_c= (tt't)_c t_c~\text{if }tt'\leq_T c\rangle$$ 
The defining relations of $B_c^*$ are called the \defn{dual braid relations} with respect to $c$. We mention some properties of $B_c^*$, which can be found in \cite{Dual}. The monoid $B_c^*$ is infinite and embeds into $A_W$. In fact, $B_c^*$ is a Garside monoid, hence it embeds into its group of fractions $\mathrm{Frac}(B_c^*)$ and the word problem in $\mathrm{Frac}(B_c^*)$ is solvable. Bessis showed that $\mathrm{Frac}(B_c^*)$ is isomorphic to $A_W$, but his proof requires a case-by-case analysis (see \cite[Fact 2.2.4]{Dual}) and the isomorphism is difficult to understand explicitly.  

More precisely, the embedding $B_c^*\subseteq A_W$ sends $s_c$ to $\mathbf{s}$ for every $s\in S$. In \cite[Proposition 3.13]{DG}, a formula for the elements of $T_c$ (which are the atoms of the monoid $B_c^*$) as products of the Artin generators is given, but it does not give in general a braid word of shortest possible length. 

\begin{exple}\label{ex:dual}
Let $(W,S)$ be of type $A_2$ and $c\in W$ be the Coxeter element $s_1 s_2$ where $s_i=(i,i+1)$. Then we have the dual braid relation $(s_1)_c (s_2)_c=(s_1 s_2 s_1)_c (s_1)_c$. Hence inside $A_W$, the atom $(s_1 s_2 s_1)_c$ corresponding to the non-simple reflection $s_1 s_2 s_1$ is equal to $\mathbf{s}_1\mathbf{s}_2 \mathbf{s}_1^{-1}$. 
\end{exple}

As every Garside monoid, $B_c^*$ has a finite set of \defn{simple elements}, which form a lattice under left divisibility. They are defined as follows. For $x\in \NC(W,c)$, let $x=t_1 t_2\cdots t_k$ be a \defn{$T$-reduced expression} of $x$, that is, a reduced expression as product of reflections. Then Bessis showed that the element $x_c:= (t_1)_c (t_2)_c\cdots (t_k)_c\in B_c^*$ is independent of the 
choice of the reduced expression of $x$ and therefore well-defined as a consequence of a dual Matsumoto property \cite[Section 1.6]{Dual}. The Garside element is the lift $c_c$ of $c$ and the set $\mathrm{Div}(c)$ of simple elements (that is, of (left) divisors of $c_c$) is given by $\mathrm{Div}(c):=\{ x_c~|~ x\in\NC(W,c)\}$. There is an isomorphism of posets $(\NC(W,c),\leq_T)\cong(\mathrm{Div}(c),\leq), x\mapsto x_c$, where $\leq$ is the left-divisibility order in $B_c^*$. In general, we are only able to determine the elements of $\mathrm{Div}(c)$ as words in the classical Artin generators $\mathbf{S}$ of $A_W$ by an inductive application of the dual braid relations. It is therefore difficult to study properties of elements of $\mathrm{Div}(c)$ viewed inside $A_W$. Note that the composition $B_c^*\hookrightarrow A_W\twoheadrightarrow W$ sends every product $(t_1)_c (t_2)_c\cdots (t_k)_c$, $t_i\in T$ to $t_1 t_2\cdots t_k$. 

\subsection{Standard Coxeter elements in $W_{D_n}$}\label{sec:cox}

In this subsection, we characterize standard Coxeter elements in $W_{D_n}$ in terms of signed permutations. This will be needed to introduce graphical representations of $c$-noncrossing partitions of type $D_n$ in Section~\ref{graphical}.

Recall that  $W_{D_n} \subseteq W_{B_n}$ and that $w(-i)=-w(i),~\mbox{for all}~ i\in[-n,n]$ and all $w \in W_{B_n}$.
In $W_{B_n}$, cycles of the shape $(i_1, \dots, i_r, -i_1, \dots, -i_r)$ are abbreviated by $[i_1, \dots i_r]$ and called \defn{balanced cycles}, and those
of type $(i_1, \dots, i_r)( -i_1, \dots, -i_r)$ by $((i_1, \dots, i_r ))$ and called \defn{paired cycles}. 
The set of reflections in $W_{D_n}$ is 
$$T:= T_{D_n} :=\{ (i,j)(-i,-j) \mid i, j\in\{-n, \dots, n\}, i\neq \pm j\},$$
and every $w \in W_{D_n}$ can be written as a product of disjoint cycles in which there is an even number of balanced cycles (see~\cite[Section 2]{AR}).

\begin{lemma}\label{lem:std}
An element $c\in W_{D_n}$ ($n\geq 3$) is a standard Coxeter element if and only if $c=(i_1, -i_1)(i_2, \dots, i_{n}, -i_2, \dots, -i_{n})$ where $\{ i_1, \dots, i_{n}\}=\{1, 2, 3, \dots, n\}$, $i_1\in\{1,2\}$ and the sequence $i_2\cdots i_{n}$ is first increasing, then decreasing. 
\end{lemma}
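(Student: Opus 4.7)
The plan is to proceed by induction on $n$, with base case $n=3$. For $n=3$, one directly enumerates the four distinct standard Coxeter elements, obtained from the six orderings $t_{\sigma(0)}t_{\sigma(1)}t_{\sigma(2)}$ modulo the single commutation $t_0t_1=t_1t_0$, and computes each as a signed permutation; the four resulting cycle shapes $(1,-1)(2,3,-2,-3)$, $(1,-1)(3,2,-3,-2)$, $(2,-2)(1,3,-1,-3)$, $(2,-2)(3,1,-3,-1)$ are precisely the four signed permutations allowed by the claimed form (sequences of length two are trivially first increasing then decreasing).

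For the inductive step, assume $n\geq 4$ and the statement for $W_{D_{n-1}}=\langle t_0,\dots,t_{n-2}\rangle$. The key algebraic observation is that $t_{n-1}$ commutes with every $t_j$ for $j\neq n-2$, so in any product $t_{i_1}\cdots t_{i_n}$ defining a standard Coxeter element, $t_{n-1}$ can be moved via commutations all the way to the left or all the way to the right, according to whether it occurs before or after $t_{n-2}$. This yields $c=t_{n-1}c'$ in the former case and $c=c't_{n-1}$ in the latter, where $c'$ is a standard Coxeter element of $W_{D_{n-1}}$. By the inductive hypothesis $c'=(i_1,-i_1)(i_2,\dots,i_{n-1},-i_2,\dots,-i_{n-1})$ is of the claimed form, and since $i_1\in\{1,2\}$ and $n\geq 4$, the peak of the sequence $i_2,\dots,i_{n-1}$ is necessarily the maximum value $n-1$, sitting at some position $j$.

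A direct cycle computation, using that $c'$ fixes $\pm n$ while $t_{n-1}$ interchanges the pairs $\{n-1,n\}$ and $\{-(n-1),-n\}$, shows that right multiplication by $t_{n-1}$ inserts $n$ immediately after $n-1$ in the balanced cycle of $c'$, whereas left multiplication inserts $n$ immediately before $n-1$. In both cases the enlarged balanced cycle has new peak $n$ adjacent to the old peak $n-1$, so the sequence remains first increasing then decreasing, and $i_1$ is unchanged. This proves the forward implication.

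For the converse, given $c$ of the claimed form with peak $n$ at position $l$, the monotonicity of the two halves forces the value $n-1$ (present in the sequence since $i_1\in\{1,2\}$ and $n\geq 4$) to occupy position $l-1$ or $l+1$; placing $n-1$ anywhere else would contradict strict monotonicity of the increasing or decreasing portion. Deleting $n$ from the sequence yields a shorter sequence of the claimed form, corresponding by induction to a standard Coxeter element $c'$ of $W_{D_{n-1}}$, and reversing the cycle calculation from the forward step expresses $c$ as $c't_{n-1}$ (if $i_{l-1}=n-1$) or as $t_{n-1}c'$ (if $i_{l+1}=n-1$). The main technical obstacle is the cycle computation itself, which must carefully track how the fixed points $\pm n$ of $c'$ are grafted into the balanced $(2n-4)$-cycle of $c'$; the boundary cases $l\in\{2,n\}$ amount to the statement that a monotone sequence has a single extremum, and require only minor notational adjustments.
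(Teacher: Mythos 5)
Your proof is correct and follows essentially the same route as the paper's: induction on $n$ with the base case $n=3$ checked by hand, and the inductive step obtained by moving $t_{n-1}$ (which commutes with every generator except $t_{n-2}$) to the far left or far right so that $c=t_{n-1}c'$ or $c=c't_{n-1}$ with $c'$ a standard Coxeter element of $W_{D_{n-1}}$, in both directions of the equivalence. You simply spell out the ``straightforward computation'' the paper leaves implicit, namely that multiplying by $t_{n-1}$ inserts $n$ next to $n-1$ in the balanced cycle and preserves unimodality.
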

\begin{proof}
The proof is by induction on $n$. The case $n=3$ is easy to check by hand. Let $c$ be a standard Coxeter element in $W_{D_n}$, $n\geq 4$. Then either $s_n c$ or $c s_n$ is a standard Coxeter element in $W_{D_{n-1}}$, in which case induction and a straightforward computation shows that $c$ is of the required form. Conversely if $c$ is of the above form, then since $(i_1, -i_1)$ commutes with $s_n$ either $s_n c$ or $c s_n$ is of the above form in $W_{D_{n-1}}$, hence is a standard Coxeter element in $W_{D_{n-1}}$, implying that $c$ is a standard Coxeter element in $W_{D_n}$. 
\end{proof}

Elements in $\NC(W_{D_n}, c)$ will be described below via a graphical representation. 

\section{Simple dual braids of type $D_n$ are Mikado braids}\label{main}

The aim of this section is to show Theorem~\ref{theorem:main}, that is, that simple dual braids of type $D_n$ are Mikado braids.

\subsection{Outline of the proof}

The proof proceeds as follows. 
\begin{itemize}
\item\textbf{Step 1.} We describe in Section~\ref{graphical} a pictural model for the elements $x\in\NC(W_{D_n}, c)$ which is due to Athanasiadis and Reiner \cite{AR}. 
In this model the element $x$ is represented by a diagram consisting of non-intersecting polygons joining labeled points on a circle. The labeling depends on the choice of the standard Coxeter element $c$, more precisely, we first require to write the Coxeter element as a signed permutation (as in Lemma~\ref{lem:std}). 

\item\textbf{Step 2.} We slightly modify the diagram from Step $1$ associated to $x\in\NC(W_{D_n}, c)$ to obtain a new diagram $N_x$ consisting of non-intersecting polygons joining labeled points on a circle. The only difference with the Athanasiadis-Reiner model is that there is a point with two labels in the latter, which we split in two different points. As we will see, the diagram $N_x$ is not unique in general, but we will show that all the information which we will use from the diagram $N_x$ is independent of the chosen diagram representing $x$. From this new diagram $N_x$, we build a topological braid $\beta_x$ lying in an Artin group $A_{B_n}$ of type $B_n$ (viewed inside $A_{A_{2n-1}}$, hence $\beta_x$ is a symmetric braid on $2n$ strands). We first explain how to define the diagram $N_x$ for elements of $T_{D_n}\subseteq\NC(W_{D_n}, c)$ and we then do it for all $x\in\NC(W_{D_n}, c)$. 

\item\textbf{Step 3.} We show that the braids $\pi_{n, 1}(\beta_t)\in \Ab$, for $t\in T_{D_n}$,  lie in $A_{D_n}$ and satisfy the dual braid relations with respect to $c$. This will follow  from the more general statement that if $x \leq_T xt\leq_T c$ with $t\in T_{D_n}$, then $\pi_{n, 1}(\beta_x) \pi_{n, 1}(\beta_t)=\pi_{n,1}(\beta_{xt})$. This property and the fact that $\pi_{n, 1}(\beta_s)=\mathbf{s}$ for all $s\in S_{D_n}$ will be enough to conclude that $\pi_{n, 1}(\beta_x)$ is equal to the simple dual braid $x_c$ for all $x\in \NC(W_{D_n}, c)$ (this is explained in the proof of Corollary~\ref{cor:sdb}). In particular we also show that $\pi_{n,1}(\beta_x)$ does not depend on the choice of the diagram $N_x$. 


\item\textbf{Step 4.} We show that the braid $\beta_x$, $x\in \NC(W_{D_n}, c)$ is a Mikado braid in $A_{B_n}$ by using the topological characterization of \cite{DG}. Recall that $\beta_x$ is defined graphically, as an Artin braid on $2n$ strands. Together with Step $3$ and Theorem~\ref{thm:mikado_bd}, it follows that $x_c=\pi_{n, 1}(\beta_x)$ is a Mikado braid, which proves Theorem~\ref{theorem:main}. 

\end{itemize}



\subsection{Graphical model for noncrossing partitions}\label{graphical}

Athanasiadis and Reiner found a graphical model for noncrossing partitions of type $D_n$. We present it here (with slightly different conventions). First we explain how to label a circle depending on the choice of the standard Coxeter element $c$. 

Given a standard Coxeter element $c=(i_1, -i_1)(i_2, \dots, i_{n}, -i_2, \dots, -i_{n})$ in $W_{D_n}$, where the  notation is as in Lemma~\ref{lem:std} and where $i_2=-n$, we place $2n-2$ points (labeled by $i_2, \dots, i_n, -i_2, \dots, -i_n$) on a circle as follows: point $-n$ is at the top of the circle while point $n$ is at the bottom. The remaining points all have distinct height depending on their label: if $i<j$ then point $i$ is higher than point $j$. Moreover, when going along the circle in clockwise order starting at $i_2=-n$, the points must be met in the order $i_2 i_3 \cdots i_n (-i_2) (-i_3)\cdots (-i_n)$. Finally, we add a point at the center of the circle, labeled by $\pm i_1$. 


Athanasiadis and Reiner showed that $c$-noncrossing partitions are those for which there exists a graphical representation as follows (in their description, we have $i_1= n$; this corresponds to a choice of Coxeter element which is not standard, however by conjugation we can assume it to be standard an to have $i_1\in\{ 1, 2\}$. The $c$-noncrossing partition lattices are isomorphic for all Coxeter elements $c$). Given  $x\in\NC(W_{D_n}, c)$, consider its cycle decomposition inside $S_{-n,n}$ and associate to each cycle the polygon given by the convex hull of the points labeled by elements in the support of the cycle. It results in a noncrossing diagram, i.e., the various obtained polygons do not intersect, with two possible exceptions: if there is a polygon $Q$ of $x$ with $i_1\in Q$, $-i_1\notin Q$, then $-Q$ is also a polygon of $x$. Thus the two polygons $Q$ and $-Q$ will have the middle point in common (Note that since $x$ is a signed permutation, for every polygon $P$ of $x$ we have that $-P$ is also a polygon of $x$, possibly with $P=-P$). The second case appears when the decomposition of $x$ has a product of factors of the form $[j][i_1]$ for some $j\neq \pm i_1$. In this case to avoid confusion with the noncrossing representation of the reflection $((j, i_1))$ (or $((j, -i_1))$) we have to choose an alternative way of representing this product. Note that the cycle $[j]$ should be considered as a polygon $P$ such that $P=-P$. By analogy with the situation where there is such a polygon and where the point $\pm i_1$ lies inside $P$, we represent $[j]$ by two curves both joining $j$ to $-j$ and not intersecting except at the points $\pm j$, in such a way that the point $\pm i_1$ lies between these two curves. 

Conversely, to every noncrossing diagram with the above properties, one can associate an element $x$ of $\NC(W_{D_n},c)$ as follows: we send each polygon $P$ with labels $j_1, j_2, \dots, j_k$ (read in clockwise order) to the cycle $(j_1, j_2, \dots, j_k)$ except in case $P=-P$. Each single point with label $i$ is sent to the one-cycle $(i)$ except $i_1$ in case there is a polygon $P$ with $P=-P$ (in which case $\pm i_1$ lie inside $P$). In this last case, if $P$ is labeled by $j_1,j_2,\dots,j_k$ then we send it to the product of cycles $(i_1, -i_1)(j_1,j_2,\dots,j_k)$ (like in the middle example of Figure~\ref{figure:ref4}). The element $x$ is then the product of all the cycles associated to all the polygons of the noncrossing diagram (note that they are disjoint). Note that when the middle point lies in two different polygons, one has to specify in which polygon the label $i_1$ lies. Examples are given in Figure~\ref{figure:ref4} and we refer to~\cite{AR} for more details. 




\begin{figure}[h!]

\begin{tabular}{ccc}

\begin{pspicture}(-2,-1.92)(2.5,1.92)
\pscircle[linecolor=gray, linewidth=0.2pt](0,0){1.8}

\psline[linecolor=red](0.897,1.56)(1.44,1.08)(0,0)(0.897,1.56)
\psline[linecolor=red](-0.897,-1.56)(-1.44,-1.08)(0,0)(-0.897,-1.56)
\psline[linecolor=red](0,1.8)(-1.796,-0.12)
\psline[linecolor=red](0,-1.8)(1.796,0.12)

\psdots(0,1.8)(0.897,1.56)(-1.2237,1.32)(1.44,1.08)(-1.59197,0.84)(1.697,0.6)(0,0)(1.796,0.12)(-1.796,-0.12)(-1.697,-0.6)(1.59197,-0.84)(-1.44,-1.08)(1.2237,-1.32)(-0.897,-1.56)(0,-1.8)


\psline[linestyle=dotted, linewidth=0.4pt](-2,1.8)(2,1.8)
\psline[linestyle=dotted, linewidth=0.4pt](-2,1.56)(2,1.56)
\psline[linestyle=dotted, linewidth=0.4pt](-2,1.32)(2,1.32)
\psline[linestyle=dotted, linewidth=0.4pt](-2,1.08)(2,1.08)
\psline[linestyle=dotted, linewidth=0.4pt](-2,0.84)(2,0.84)
\psline[linestyle=dotted, linewidth=0.4pt](-2,0.6)(2,0.6)
\psline[linestyle=dotted, linewidth=0.4pt](-2,0.36)(2,0.36)
\psline[linestyle=dotted, linewidth=0.4pt](-2,0.12)(2,0.12)
\psline[linestyle=dotted, linewidth=0.4pt](-2,-1.8)(2,-1.8)
\psline[linestyle=dotted, linewidth=0.4pt](-2,-1.56)(2,-1.56)
\psline[linestyle=dotted, linewidth=0.4pt](-2,-1.32)(2,-1.32)
\psline[linestyle=dotted, linewidth=0.4pt](-2,-1.08)(2,-1.08)
\psline[linestyle=dotted, linewidth=0.4pt](-2,-0.84)(2,-0.84)
\psline[linestyle=dotted, linewidth=0.4pt](-2,-0.6)(2,-0.6)
\psline[linestyle=dotted, linewidth=0.4pt](-2,-0.36)(2,-0.36)
\psline[linestyle=dotted, linewidth=0.4pt](-2,-0.12)(2,-0.12)

\rput(2.2, 1.8){\tiny $-8$}
\rput(2.2, 1.56){\tiny $-7$}
\rput(2.2, 1.32){\tiny $-6$}
\rput(2.2, 1.08){\tiny $-5$}
\rput(2.2, 0.84){\tiny $-4$}
\rput(2.2, 0.6){\tiny $-3$}
\rput(-0.4,0){\tiny $-2$}
\rput(0.4,0){\tiny $2$}
\rput(2.2, 0.12){\tiny $-1$}
\rput(2.2, -0.12){\tiny $1$}
\rput(2.2, -0.6){\tiny $3$}
\rput(2.2, -0.84){\tiny $4$}
\rput(2.2, -1.08){\tiny $5$}
\rput(2.2, -1.32){\tiny $6$}
\rput(2.2, -1.56){\tiny $7$}
\rput(2.2, -1.8){\tiny $8$}
\end{pspicture}

&

\begin{pspicture}(-2,-1.92)(2.5,1.92)
\pscircle[linecolor=gray, linewidth=0.2pt](0,0){1.8}

\psline[linecolor=red](-1.2237,1.32)(1.697,0.6)(1.796,0.12)(1.2237,-1.32)(-1.697,-0.6)(-1.796,-0.12)(-1.2237,1.32)
\psline[linecolor=red](0,1.8)(0.897,1.56)(1.44,1.08)(0,1.8)
\psline[linecolor=red](0,-1.8)(-0.897,-1.56)(-1.44,-1.08)(0,-1.8)

\psdots(0,1.8)(0.897,1.56)(-1.2237,1.32)(1.44,1.08)(-1.59197,0.84)(1.697,0.6)(0,0)(1.796,0.12)(-1.796,-0.12)(-1.697,-0.6)(1.59197,-0.84)(-1.44,-1.08)(1.2237,-1.32)(-0.897,-1.56)(0,-1.8)


\psline[linestyle=dotted, linewidth=0.4pt](-2,1.8)(2,1.8)
\psline[linestyle=dotted, linewidth=0.4pt](-2,1.56)(2,1.56)
\psline[linestyle=dotted, linewidth=0.4pt](-2,1.32)(2,1.32)
\psline[linestyle=dotted, linewidth=0.4pt](-2,1.08)(2,1.08)
\psline[linestyle=dotted, linewidth=0.4pt](-2,0.84)(2,0.84)
\psline[linestyle=dotted, linewidth=0.4pt](-2,0.6)(2,0.6)
\psline[linestyle=dotted, linewidth=0.4pt](-2,0.36)(2,0.36)
\psline[linestyle=dotted, linewidth=0.4pt](-2,0.12)(2,0.12)
\psline[linestyle=dotted, linewidth=0.4pt](-2,-1.8)(2,-1.8)
\psline[linestyle=dotted, linewidth=0.4pt](-2,-1.56)(2,-1.56)
\psline[linestyle=dotted, linewidth=0.4pt](-2,-1.32)(2,-1.32)
\psline[linestyle=dotted, linewidth=0.4pt](-2,-1.08)(2,-1.08)
\psline[linestyle=dotted, linewidth=0.4pt](-2,-0.84)(2,-0.84)
\psline[linestyle=dotted, linewidth=0.4pt](-2,-0.6)(2,-0.6)
\psline[linestyle=dotted, linewidth=0.4pt](-2,-0.36)(2,-0.36)
\psline[linestyle=dotted, linewidth=0.4pt](-2,-0.12)(2,-0.12)

\rput(2.2, 1.8){\tiny $-8$}
\rput(2.2, 1.56){\tiny $-7$}
\rput(2.2, 1.32){\tiny $-6$}
\rput(2.2, 1.08){\tiny $-5$}
\rput(2.2, 0.84){\tiny $-4$}
\rput(2.2, 0.6){\tiny $-3$}
\rput(-0.4,0){\tiny $-2$}
\rput(0.4,0){\tiny $2$}
\rput(2.2, 0.12){\tiny $-1$}
\rput(2.2, -0.12){\tiny $1$}
\rput(2.2, -0.6){\tiny $3$}
\rput(2.2, -0.84){\tiny $4$}
\rput(2.2, -1.08){\tiny $5$}
\rput(2.2, -1.32){\tiny $6$}
\rput(2.2, -1.56){\tiny $7$}
\rput(2.2, -1.8){\tiny $8$}
\end{pspicture}

&

\begin{pspicture}(-2,-1.92)(2.5,1.92)
\pscircle[linecolor=gray, linewidth=0.2pt](0,0){1.8}

\psline[linecolor=red](-1.59197,0.84)(1.2237,-1.32)(-1.697,-0.6)(-1.59197,0.84)
\psline[linecolor=red](1.59197,-0.84)(-1.2237,1.32)(1.697,0.6)(1.59197,-0.84)

\psdots(0,1.8)(0.897,1.56)(-1.2237,1.32)(1.44,1.08)(-1.59197,0.84)(1.697,0.6)(0,0)(1.796,0.12)(-1.796,-0.12)(-1.697,-0.6)(1.59197,-0.84)(-1.44,-1.08)(1.2237,-1.32)(-0.897,-1.56)(0,-1.8)


\psline[linestyle=dotted, linewidth=0.4pt](-2,1.8)(2,1.8)
\psline[linestyle=dotted, linewidth=0.4pt](-2,1.56)(2,1.56)
\psline[linestyle=dotted, linewidth=0.4pt](-2,1.32)(2,1.32)
\psline[linestyle=dotted, linewidth=0.4pt](-2,1.08)(2,1.08)
\psline[linestyle=dotted, linewidth=0.4pt](-2,0.84)(2,0.84)
\psline[linestyle=dotted, linewidth=0.4pt](-2,0.6)(2,0.6)
\psline[linestyle=dotted, linewidth=0.4pt](-2,0.36)(2,0.36)
\psline[linestyle=dotted, linewidth=0.4pt](-2,0.12)(2,0.12)
\psline[linestyle=dotted, linewidth=0.4pt](-2,-1.8)(2,-1.8)
\psline[linestyle=dotted, linewidth=0.4pt](-2,-1.56)(2,-1.56)
\psline[linestyle=dotted, linewidth=0.4pt](-2,-1.32)(2,-1.32)
\psline[linestyle=dotted, linewidth=0.4pt](-2,-1.08)(2,-1.08)
\psline[linestyle=dotted, linewidth=0.4pt](-2,-0.84)(2,-0.84)
\psline[linestyle=dotted, linewidth=0.4pt](-2,-0.6)(2,-0.6)
\psline[linestyle=dotted, linewidth=0.4pt](-2,-0.36)(2,-0.36)
\psline[linestyle=dotted, linewidth=0.4pt](-2,-0.12)(2,-0.12)

\rput(2.2, 1.8){\tiny $-8$}
\rput(2.2, 1.56){\tiny $-7$}
\rput(2.2, 1.32){\tiny $-6$}
\rput(2.2, 1.08){\tiny $-5$}
\rput(2.2, 0.84){\tiny $-4$}
\rput(2.2, 0.6){\tiny $-3$}
\rput(-0.4,0){\tiny $-2$}
\rput(0.4,0){\tiny $2$}
\rput(2.2, 0.12){\tiny $-1$}
\rput(2.2, -0.12){\tiny $1$}
\rput(2.2, -0.6){\tiny $3$}
\rput(2.2, -0.84){\tiny $4$}
\rput(2.2, -1.08){\tiny $5$}
\rput(2.2, -1.32){\tiny $6$}
\rput(2.2, -1.56){\tiny $7$}
\rput(2.2, -1.8){\tiny $8$}
\end{pspicture}

\end{tabular}

\caption{Examples of noncrossing diagrams for $x_1=((1,-8))((7,5,-2))$, $x_2=((8,7,5))[6,3,1][2]$, $x_3=((6,3,-4))\in\NC(W_{D_n},c)$.}
\label{figure:ref4}

\end{figure}
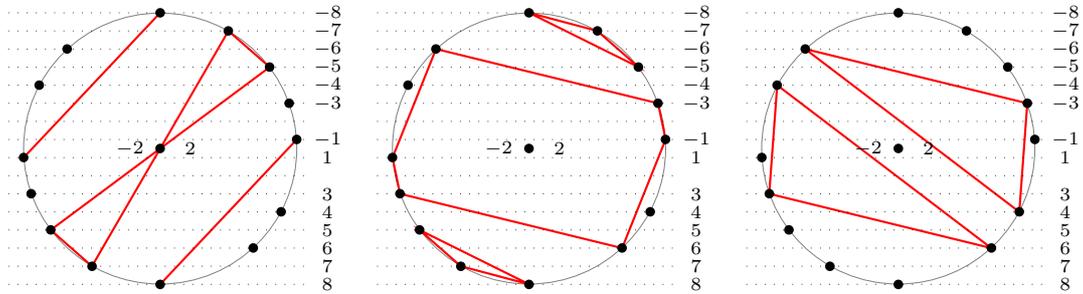

\subsection{The diagram $N_x$ and the braid $\beta_x$}

To define the diagram $N_x$, we slightly modify the labeling of the circle given in the previous section by splitting the point $\pm i_1$ into two points placed on the vertical axis of the circle consistently with their labels (all the points should be placed such that the point $i$ is higher than the point $j$ if $i<j$). An example is given in Figure~\ref{figure:ordre} and we call this labeling the \defn{$c$-labeling} of the circle. The idea is then to start from Athanasiadis and Reiner's graphical representation of $x\in\NC(W_{D_n}, c)$ and just split the middle point into two points. For convenience we may represent the polygons by curvilinear polygons since in some cases, because of the splitting it might not be possible to have the polygons not intersecting each other. Depending on the situation we will add an edge joining the two points $i_1$ and $-i_1$:  we explain more in details below how to draw the diagrams $N_x$, first when $x$ is a reflection, then in general. 

\begin{figure}[h!]
\centering
{\begin{minipage}{7cm}
\psscalebox{1.3}{\begin{pspicture}(-2,-1.92)(2.5,1.92)
\pscircle[linecolor=gray, linewidth=0.2pt](0,0){1.8}

\psdots(0,1.8)(0.897,1.56)(-1.2237,1.32)(1.44,1.08)(-1.59197,0.84)(1.697,0.6)(0,0.36)(1.796,0.12)(-1.796,-0.12)(0,-0.36)(-1.697,-0.6)(1.59197,-0.84)(-1.44,-1.08)(1.2237,-1.32)(-0.897,-1.56)(0,-1.8)


\psline[linestyle=dotted, linewidth=0.4pt](-2,1.8)(2,1.8)
\psline[linestyle=dotted, linewidth=0.4pt](-2,1.56)(2,1.56)
\psline[linestyle=dotted, linewidth=0.4pt](-2,1.32)(2,1.32)
\psline[linestyle=dotted, linewidth=0.4pt](-2,1.08)(2,1.08)
\psline[linestyle=dotted, linewidth=0.4pt](-2,0.84)(2,0.84)
\psline[linestyle=dotted, linewidth=0.4pt](-2,0.6)(2,0.6)
\psline[linestyle=dotted, linewidth=0.4pt](-2,0.36)(2,0.36)
\psline[linestyle=dotted, linewidth=0.4pt](-2,0.12)(2,0.12)
\psline[linestyle=dotted, linewidth=0.4pt](-2,-1.8)(2,-1.8)
\psline[linestyle=dotted, linewidth=0.4pt](-2,-1.56)(2,-1.56)
\psline[linestyle=dotted, linewidth=0.4pt](-2,-1.32)(2,-1.32)
\psline[linestyle=dotted, linewidth=0.4pt](-2,-1.08)(2,-1.08)
\psline[linestyle=dotted, linewidth=0.4pt](-2,-0.84)(2,-0.84)
\psline[linestyle=dotted, linewidth=0.4pt](-2,-0.6)(2,-0.6)
\psline[linestyle=dotted, linewidth=0.4pt](-2,-0.36)(2,-0.36)
\psline[linestyle=dotted, linewidth=0.4pt](-2,-0.12)(2,-0.12)

\rput(2.2, 1.8){\tiny $-8$}
\rput(2.2, 1.56){\tiny $-7$}
\rput(2.2, 1.32){\tiny $-6$}
\rput(2.2, 1.08){\tiny $-5$}
\rput(2.2, 0.84){\tiny $-4$}
\rput(2.2, 0.6){\tiny $-3$}
\rput(2.2, 0.36){\tiny $-2$}
\rput(2.2, 0.12){\tiny $-1$}
\rput(2.2, -0.12){\tiny $1$}
\rput(2.2, -0.36){\tiny $2$}
\rput(2.2, -0.6){\tiny $3$}
\rput(2.2, -0.84){\tiny $4$}
\rput(2.2, -1.08){\tiny $5$}
\rput(2.2, -1.32){\tiny $6$}
\rput(2.2, -1.56){\tiny $7$}
\rput(2.2, -1.8){\tiny $8$}
\end{pspicture}}
 \end{minipage}
\begin{minipage}{6.5cm} 
\caption{}
\label{figure:ordre}
Example of a $c$-labeling in type $D_8$. Here $c=t_1 t_3 t_5 t_7 t_6 t_4 t_2 t_0=(2, -2)[-8,-7,-5,-3,-1,4,6]$ and $i_1=2$. \end{minipage}}

\end{figure}



\subsubsection{Pictures for reflections}\label{pic:ref}

Reflections are all of the form $t= c_1 c_2$, where $c_1$ and $c_2$ are two $2$-cycles with opposite support. If $c_1=(i,j)$, we will draw a curvilinear ``polygon'' with two edges both joining $i$ to $j$. We then orient the polygon in counterclockwise order. We do the same for $c_2=(-i,-j)$ in such a way that the second curvilinear polygon does not intersect the first one. 
In some cases, there is not a unique way of drawing two such curvilinear polygons with the condition that the resulting diagram should be noncrossing. We explain how to do it in the next paragraph by separating the set of reflections into three classes. 

 Firstly, assume that $\supp(c_1)=\{i, j\}\subseteq \{1, \dots, n\}$, then $N_t$ is drawn as in the left picture of Figure~\ref{figure:ref1}. Now assume that $\supp(c_1)=\{i, -j\}$ with $i\in \{1,\dots, n\}\backslash \{i_1\}$, $j\in\{-1,\dots, -n\}\backslash \{-i_1\}$. In that case, we draw the two curvilinear polygons in such a way that the two middle points labeled by $\pm i_1$ lie between them, as done in Figure~\ref{figure:ref2}. The last case is the case where $c_1=(i_1, j)$ with $j\in\{-1,\dots,-n\}\backslash\{-i_1\}$. In that case, there are two ways of drawing the curvilinear polygon (see the left pictures of Figure~\ref{figure:ref3}). We can choose any of the two pictures for $N_t$. 


Starting from such a noncrossing diagram, we then associate an Artin braid $\beta_t$ on $2n$ strands to it, by first projecting the noncrossing diagram to the right (as done in the left pictures of Figures~\ref{figure:ref1} and \ref{figure:ref2}), i.e., putting all the points on the same vertical line, obtaining a new graph for the noncrossing partition. This new graph can then be viewed as a braid diagram, viewed from the bottom: a curve joining point $k$ to point $\ell$ corresponds to a $k$-th strand ending at $\ell$, while single points without a curve starting or ending at them correspond to unbraided strands. If a point has nothing at its right (resp. at its left), it means that the corresponding unbraided strand is above all the others (resp. below all the others). The points lying right to (resp. left to) a curve correspond to an unbraided strand lying above (resp. below) the strand corresponding to that curve. See the above mentioned Figures. Note that in the case of Figure~\ref{figure:ref3}, the two braids $\beta_t$ obtained from the two different diagrams $N_t$ are distinct in $A_{B_n}$, but their images $\pi_{n, 1}(\beta_t)$ in the quotient $\Ab$ are the same because we can invert the crossings corresponding to the generator $\mathbf{s}_0$ (because of the relation $\mathbf{s}_0^2=1$ which holds in the quotient).

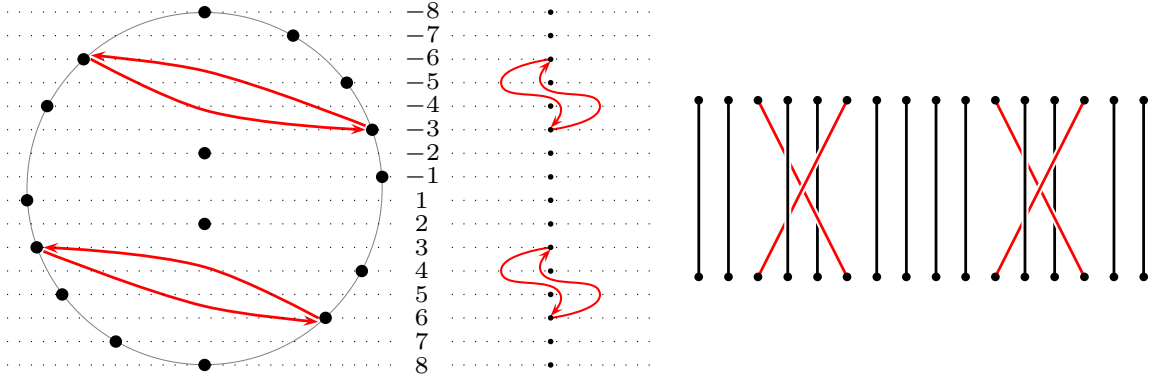
\begin{figure}[h!]

\psscalebox{1.3}{\begin{pspicture}(-2,-1.92)(10,1.92)
\pscircle[linecolor=gray, linewidth=0.2pt](0,0){1.8}

\pscurve[linecolor=red, linewidth=0.6pt]{->}(3.5,0.6)(4,0.84)(3.4,1.08)(3.5,1.3)
\pscurve[linecolor=red, linewidth=0.6pt]{<-}(3.5,0.62)(3.6,0.84)(3,1.08)(3.5,1.32)

\pscurve[linecolor=red, linewidth=0.6pt]{->}(3.5,-0.6)(3,-0.84)(3.6,-1.08)(3.5,-1.3)
\pscurve[linecolor=red, linewidth=0.6pt]{<-}(3.5,-0.62)(3.4,-0.84)(4,-1.08)(3.5,-1.32)

\pscurve[linecolor=red]{<-}(-1.15,1.36)(0,1.2)(1.63,0.64)
\pscurve[linecolor=red]{<-}(1.15,-1.36)(0,-1.2)(-1.63,-0.64)

\pscurve[linecolor=red]{->}(-1.15,1.32)(0,0.8)(1.63,0.6)
\pscurve[linecolor=red]{->}(1.15,-1.32)(0,-0.8)(-1.63,-0.6)

\psdots(0,1.8)(0.897,1.56)(-1.2237,1.32)(1.44,1.08)(-1.59197,0.84)(1.697,0.6)(0,0.36)(1.796,0.12)(-1.796,-0.12)(0,-0.36)(-1.697,-0.6)(1.59197,-0.84)(-1.44,-1.08)(1.2237,-1.32)(-0.897,-1.56)(0,-1.8)

\psdots[dotsize=1.6pt](3.5,1.8)(3.5,1.56)(3.5,1.32)(3.5,1.08)(3.5,0.84)(3.5,0.6)(3.5,0.36)(3.5,0.12)(3.5,-0.12)(3.5,-0.36)(3.5,-0.6)(3.5,-0.84)(3.5,-1.08)(3.5,-1.32)(3.5,-1.56)(3.5,-1.8)


\psline[linestyle=dotted, linewidth=0.4pt](-2,1.8)(2,1.8)
\psline[linestyle=dotted, linewidth=0.4pt](-2,1.56)(2,1.56)
\psline[linestyle=dotted, linewidth=0.4pt](-2,1.32)(2,1.32)
\psline[linestyle=dotted, linewidth=0.4pt](-2,1.08)(2,1.08)
\psline[linestyle=dotted, linewidth=0.4pt](-2,0.84)(2,0.84)
\psline[linestyle=dotted, linewidth=0.4pt](-2,0.6)(2,0.6)
\psline[linestyle=dotted, linewidth=0.4pt](-2,0.36)(2,0.36)
\psline[linestyle=dotted, linewidth=0.4pt](-2,0.12)(2,0.12)
\psline[linestyle=dotted, linewidth=0.4pt](-2,-1.8)(2,-1.8)
\psline[linestyle=dotted, linewidth=0.4pt](-2,-1.56)(2,-1.56)
\psline[linestyle=dotted, linewidth=0.4pt](-2,-1.32)(2,-1.32)
\psline[linestyle=dotted, linewidth=0.4pt](-2,-1.08)(2,-1.08)
\psline[linestyle=dotted, linewidth=0.4pt](-2,-0.84)(2,-0.84)
\psline[linestyle=dotted, linewidth=0.4pt](-2,-0.6)(2,-0.6)
\psline[linestyle=dotted, linewidth=0.4pt](-2,-0.36)(2,-0.36)
\psline[linestyle=dotted, linewidth=0.4pt](-2,-0.12)(2,-0.12)

\psline[linestyle=dotted, linewidth=0.4pt](2.5,1.8)(4.5,1.8)
\psline[linestyle=dotted, linewidth=0.4pt](2.5,1.56)(4.5,1.56)
\psline[linestyle=dotted, linewidth=0.4pt](2.5,1.32)(4.5,1.32)
\psline[linestyle=dotted, linewidth=0.4pt](2.5,1.08)(4.5,1.08)
\psline[linestyle=dotted, linewidth=0.4pt](2.5,0.84)(4.5,0.84)
\psline[linestyle=dotted, linewidth=0.4pt](2.5,0.6)(4.5,0.6)
\psline[linestyle=dotted, linewidth=0.4pt](2.5,0.36)(4.5,0.36)
\psline[linestyle=dotted, linewidth=0.4pt](2.5,0.12)(4.5,0.12)
\psline[linestyle=dotted, linewidth=0.4pt](2.5,-1.8)(4.5,-1.8)
\psline[linestyle=dotted, linewidth=0.4pt](2.5,-1.56)(4.5,-1.56)
\psline[linestyle=dotted, linewidth=0.4pt](2.5,-1.32)(4.5,-1.32)
\psline[linestyle=dotted, linewidth=0.4pt](2.5,-1.08)(4.5,-1.08)
\psline[linestyle=dotted, linewidth=0.4pt](2.5,-0.84)(4.5,-0.84)
\psline[linestyle=dotted, linewidth=0.4pt](2.5,-0.6)(4.5,-0.6)
\psline[linestyle=dotted, linewidth=0.4pt](2.5,-0.36)(4.5,-0.36)
\psline[linestyle=dotted, linewidth=0.4pt](2.5,-0.12)(4.5,-0.12)

\rput(2.2, 1.8){\tiny $-8$}
\rput(2.2, 1.56){\tiny $-7$}
\rput(2.2, 1.32){\tiny $-6$}
\rput(2.2, 1.08){\tiny $-5$}
\rput(2.2, 0.84){\tiny $-4$}
\rput(2.2, 0.6){\tiny $-3$}
\rput(2.2, 0.36){\tiny $-2$}
\rput(2.2, 0.12){\tiny $-1$}
\rput(2.2, -0.12){\tiny $1$}
\rput(2.2, -0.36){\tiny $2$}
\rput(2.2, -0.6){\tiny $3$}
\rput(2.2, -0.84){\tiny $4$}
\rput(2.2, -1.08){\tiny $5$}
\rput(2.2, -1.32){\tiny $6$}
\rput(2.2, -1.56){\tiny $7$}
\rput(2.2, -1.8){\tiny $8$}

\psline(5,-0.9)(5,0.9)
\psline(5.3,-0.9)(5.3,0.9)
\psline(6.2,-0.9)(6.2,0.9)
\psline(6.8,-0.9)(6.8,0.9)
\psline(7.1,-0.9)(7.1,0.9)
\psline(7.4,-0.9)(7.4,0.9)
\psline(7.7,-0.9)(7.7,0.9)
\psline(8.6,-0.9)(8.6,0.9)
\psline(9.2,-0.9)(9.2,0.9)
\psline(9.5,-0.9)(9.5,0.9)
\psline[linecolor=white, linewidth=2pt](6.5, -0.9)(5.6,0.9)
\psline[linecolor=red](6.5, -0.9)(5.6,0.9)

\psline[linecolor=white, linewidth=2pt](6.5, 0.9)(5.6,-0.9)
\psline[linecolor=red](6.5, 0.9)(5.6,-0.9)

\psline[linecolor=white, linewidth=2pt](8.9,- 0.9)(8,0.9)
\psline[linecolor=red](8.9, -0.9)(8,0.9)
\psline[linecolor=white, linewidth=2pt](8.9, 0.9)(8,-0.9)
\psline[linecolor=red](8.9, 0.9)(8,-0.9)

\psline[linecolor=white, linewidth=2pt](5.9,0.9)(5.9,-0.9)
\psline(5.9,0.9)(5.9,-0.9)
\psline[linecolor=white, linewidth=2pt](8.3,0.9)(8.3,-0.9)
\psline(8.3,0.9)(8.3,-0.9)

\psdots[dotsize=2.5pt](5,0.9)(5.3,0.9)(5.6,0.9)(5.9,0.9)(6.2,0.9)(6.5,0.9)(6.8,0.9)(7.1,0.9)(7.4,0.9)(7.7,0.9)(8,0.9)(8.3,0.9)(8.6,0.9)(8.9,0.9)(9.2,0.9)(9.5,0.9)

\psdots[dotsize=2.5pt](5,-0.9)(5.3,-0.9)(5.6,-0.9)(5.9,-0.9)(6.2,-0.9)(6.5,-0.9)(6.8,-0.9)(7.1,-0.9)(7.4,-0.9)(7.7,-0.9)(8,-0.9)(8.3,-0.9)(8.6,-0.9)(8.9,-0.9)(9.2,-0.9)(9.5,-0.9)

\end{pspicture}}

\caption{The diagram $N_t$ for $t=(3,6)(-3,-6)$ and the braid $\beta_t$.}
\label{figure:ref1}

\end{figure}

\begin{figure}[h!]

\centering
\psscalebox{1.3}{\begin{pspicture}(-2,-1.92)(10,1.92)
\pscircle[linecolor=gray, linewidth=0.2pt](0,0){1.8}

\pscurve[linecolor=red, linewidth=0.6pt]{<-}(3.52,1.08)(3.9,0.84)(3.2,0.66)(3,0.46)(3,0.26)(3.2,0)(3.7,-0.12)(3.35,-0.36)(3.5,-0.6)
\pscurve[linecolor=red, linewidth=0.6pt]{->}(3.5,1.08)(3.65,0.89)(2.9,0.65)(2.8,0.42)(3.1,-0.01)(3.25,-0.06)(3.35,-0.07)(3.6,-0.12)(3.2,-0.36)(3.48,-0.6)

\pscurve[linecolor=red, linewidth=0.6pt]{<-}(3.48,-1.08)(3.1,-0.84)(3.8,-0.66)(4,-0.46)(4,-0.26)(3.8,0)(3.3,0.12)(3.65,0.36)(3.5,0.6)
\pscurve[linecolor=red, linewidth=0.6pt]{->}(3.5,-1.08)(3.35,-0.89)(4.1,-0.65)(4.2,-0.42)(3.9,0.01)(3.75,0.06)(3.65,0.07)(3.4,0.12)(3.8,0.36)(3.52,0.6)

\pscurve[linecolor=red]{->}(1.44,1.08)(1.3,1.13)(0,0.9)(-1.62,-0.4)(-1.69,-0.56)
\pscurve[linecolor=red]{->}(-1.44,-1.08)(-1.3,-1.13)(0,-0.9)(1.62,0.4)(1.69,0.56)

\pscurve[linecolor=red]{<-}(1.395,1.08)(0,0.6)(-1.69,-0.58)
\pscurve[linecolor=red]{<-}(-1.395,-1.08)(0,-0.6)(1.69,0.58)

\psdots(0,1.8)(0.897,1.56)(-1.2237,1.32)(1.44,1.08)(-1.59197,0.84)(1.697,0.6)(0,0.36)(1.796,0.12)(-1.796,-0.12)(0,-0.36)(-1.697,-0.6)(1.59197,-0.84)(-1.44,-1.08)(1.2237,-1.32)(-0.897,-1.56)(0,-1.8)

\psdots[dotsize=1.6pt](3.5,1.8)(3.5,1.56)(3.5,1.32)(3.5,1.08)(3.5,0.84)(3.5,0.6)(3.5,0.36)(3.5,0.12)(3.5,-0.12)(3.5,-0.36)(3.5,-0.6)(3.5,-0.84)(3.5,-1.08)(3.5,-1.32)(3.5,-1.56)(3.5,-1.8)

\psline[linestyle=dotted, linewidth=0.4pt](-2,1.8)(2,1.8)
\psline[linestyle=dotted, linewidth=0.4pt](-2,1.56)(2,1.56)
\psline[linestyle=dotted, linewidth=0.4pt](-2,1.32)(2,1.32)
\psline[linestyle=dotted, linewidth=0.4pt](-2,1.08)(2,1.08)
\psline[linestyle=dotted, linewidth=0.4pt](-2,0.84)(2,0.84)
\psline[linestyle=dotted, linewidth=0.4pt](-2,0.6)(2,0.6)
\psline[linestyle=dotted, linewidth=0.4pt](-2,0.36)(2,0.36)
\psline[linestyle=dotted, linewidth=0.4pt](-2,0.12)(2,0.12)
\psline[linestyle=dotted, linewidth=0.4pt](-2,-1.8)(2,-1.8)
\psline[linestyle=dotted, linewidth=0.4pt](-2,-1.56)(2,-1.56)
\psline[linestyle=dotted, linewidth=0.4pt](-2,-1.32)(2,-1.32)
\psline[linestyle=dotted, linewidth=0.4pt](-2,-1.08)(2,-1.08)
\psline[linestyle=dotted, linewidth=0.4pt](-2,-0.84)(2,-0.84)
\psline[linestyle=dotted, linewidth=0.4pt](-2,-0.6)(2,-0.6)
\psline[linestyle=dotted, linewidth=0.4pt](-2,-0.36)(2,-0.36)
\psline[linestyle=dotted, linewidth=0.4pt](-2,-0.12)(2,-0.12)

\rput(2.2, 1.8){\tiny $-8$}
\rput(2.2, 1.56){\tiny $-7$}
\rput(2.2, 1.32){\tiny $-6$}
\rput(2.2, 1.08){\tiny $-5$}
\rput(2.2, 0.84){\tiny $-4$}
\rput(2.2, 0.6){\tiny $-3$}
\rput(2.2, 0.36){\tiny $-2$}
\rput(2.2, 0.12){\tiny $-1$}
\rput(2.2, -0.12){\tiny $1$}
\rput(2.2, -0.36){\tiny $2$}
\rput(2.2, -0.6){\tiny $3$}
\rput(2.2, -0.84){\tiny $4$}
\rput(2.2, -1.08){\tiny $5$}
\rput(2.2, -1.32){\tiny $6$}
\rput(2.2, -1.56){\tiny $7$}
\rput(2.2, -1.8){\tiny $8$}

\psline[linestyle=dotted, linewidth=0.4pt](2.5,1.8)(4.5,1.8)
\psline[linestyle=dotted, linewidth=0.4pt](2.5,1.56)(4.5,1.56)
\psline[linestyle=dotted, linewidth=0.4pt](2.5,1.32)(4.5,1.32)
\psline[linestyle=dotted, linewidth=0.4pt](2.5,1.08)(4.5,1.08)
\psline[linestyle=dotted, linewidth=0.4pt](2.5,0.84)(4.5,0.84)
\psline[linestyle=dotted, linewidth=0.4pt](2.5,0.6)(4.5,0.6)
\psline[linestyle=dotted, linewidth=0.4pt](2.5,0.36)(4.5,0.36)
\psline[linestyle=dotted, linewidth=0.4pt](2.5,0.12)(4.5,0.12)
\psline[linestyle=dotted, linewidth=0.4pt](2.5,-1.8)(4.5,-1.8)
\psline[linestyle=dotted, linewidth=0.4pt](2.5,-1.56)(4.5,-1.56)
\psline[linestyle=dotted, linewidth=0.4pt](2.5,-1.32)(4.5,-1.32)
\psline[linestyle=dotted, linewidth=0.4pt](2.5,-1.08)(4.5,-1.08)
\psline[linestyle=dotted, linewidth=0.4pt](2.5,-0.84)(4.5,-0.84)
\psline[linestyle=dotted, linewidth=0.4pt](2.5,-0.6)(4.5,-0.6)
\psline[linestyle=dotted, linewidth=0.4pt](2.5,-0.36)(4.5,-0.36)
\psline[linestyle=dotted, linewidth=0.4pt](2.5,-0.12)(4.5,-0.12)

\rput(2.2, 1.8){\tiny $-8$}
\rput(2.2, 1.56){\tiny $-7$}
\rput(2.2, 1.32){\tiny $-6$}
\rput(2.2, 1.08){\tiny $-5$}
\rput(2.2, 0.84){\tiny $-4$}
\rput(2.2, 0.6){\tiny $-3$}
\rput(2.2, 0.36){\tiny $-2$}
\rput(2.2, 0.12){\tiny $-1$}
\rput(2.2, -0.12){\tiny $1$}
\rput(2.2, -0.36){\tiny $2$}
\rput(2.2, -0.6){\tiny $3$}
\rput(2.2, -0.84){\tiny $4$}
\rput(2.2, -1.08){\tiny $5$}
\rput(2.2, -1.32){\tiny $6$}
\rput(2.2, -1.56){\tiny $7$}
\rput(2.2, -1.8){\tiny $8$}

\psline(5,-0.9)(5,0.9)
\psline(5.3,-0.9)(5.3,0.9)
\psline(5.6,-0.9)(5.6,0.9)
\psline(6.2,-0.9)(6.2,0.9)
\psline(7.4,-0.9)(7.4,0.9)
\psline(8.9,-0.9)(8.9,0.9)
\psline(9.2,-0.9)(9.2,0.9)
\psline(9.5,-0.9)(9.5,0.9)
\psline[linecolor=white, linewidth=2pt](5.9, 0.9)(8,-0.9)
\psline[linecolor=red](5.9, 0.9)(8,-0.9)
\psline[linecolor=white, linewidth=2pt](5.9, -0.9)(8,0.9)
\psline[linecolor=red](5.9, -0.9)(8,0.9)

\pscurve[linecolor=white, linewidth=2pt](6.8,0.9)(6.65,0)(6.8,-0.9)
\pscurve(6.8,0.9)(6.65,0)(6.8,-0.9)

\pscurve[linecolor=white, linewidth=2pt](7.7,0.9)(7.85,0)(7.7,-0.9)
\pscurve(7.7,0.9)(7.85,0)(7.7,-0.9)

\psline[linecolor=white, linewidth=2pt](6.5, 0.9)(8.6,-0.9)
\psline[linecolor=red](6.5, 0.9)(8.6,-0.9)
\psline[linecolor=white, linewidth=2pt](6.5, -0.9)(8.6,0.9)
\psline[linecolor=red](6.5, -0.9)(8.6,0.9)

\psline[linecolor=white, linewidth=2pt](7.1,0.9)(7.1,-0.9)
\psline(7.1,0.9)(7.1,-0.9)

\psline[linecolor=white, linewidth=2pt](8.3,0.9)(8.3,-0.9)
\psline(8.3,0.9)(8.3,-0.9)

\psdots[dotsize=2.5pt](5,0.9)(5.3,0.9)(5.6,0.9)(5.9,0.9)(6.2,0.9)(6.5,0.9)(6.8,0.9)(7.1,0.9)(7.4,0.9)(7.7,0.9)(8,0.9)(8.3,0.9)(8.6,0.9)(8.9,0.9)(9.2,0.9)(9.5,0.9)

\psdots[dotsize=2.5pt](5,-0.9)(5.3,-0.9)(5.6,-0.9)(5.9,-0.9)(6.2,-0.9)(6.5,-0.9)(6.8,-0.9)(7.1,-0.9)(7.4,-0.9)(7.7,-0.9)(8,-0.9)(8.3,-0.9)(8.6,-0.9)(8.9,-0.9)(9.2,-0.9)(9.5,-0.9)

\end{pspicture}}

\caption{The diagram $N_t$ for $t=(3,-5)(-3,5)$ and the braid $\beta_t$.}
\label{figure:ref2}

\end{figure}


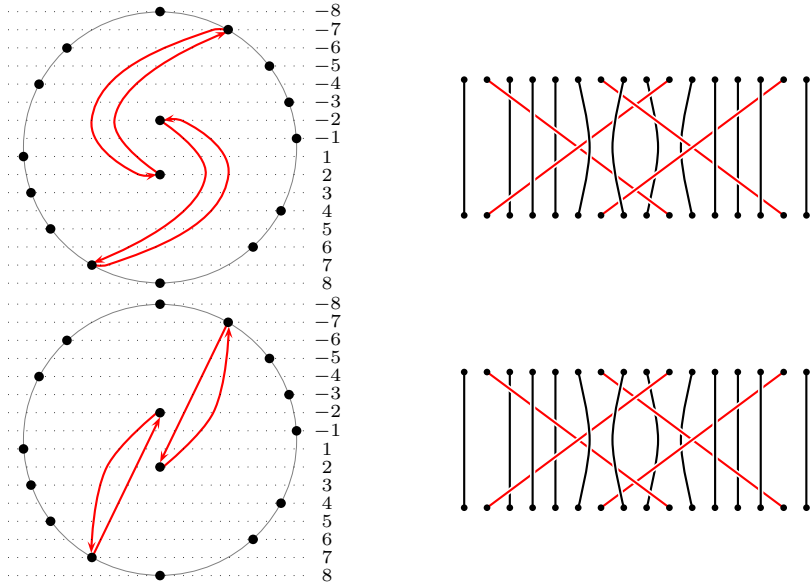
\begin{figure}[h!]

\begin{pspicture}(-2,-1.92)(10,1.92)
\pscircle[linecolor=gray, linewidth=0.2pt](0,0){1.8}

\pscurve[linecolor=red]{->}(0,0.36)(0.6,-0.36)(-0.88,-1.53)
\pscurve[linecolor=red]{<-}(0.04,0.36)(0.3, 0.35)(0.9,-0.36)(-0.7, -1.56)(-0.897,-1.56)

\pscurve[linecolor=red]{->}(0,-0.36)(-0.6,0.36)(0.88,1.53)
\pscurve[linecolor=red]{<-}(-0.04,-0.36)(-0.3,-0.35)(-0.9,0.36)(0.7, 1.56)(0.897,1.56)

\psdots(0,1.8)(0.897,1.56)(-1.2237,1.32)(1.44,1.08)(-1.59197,0.84)(1.697,0.6)(0,0.36)(1.796,0.12)(-1.796,-0.12)(0,-0.36)(-1.697,-0.6)(1.59197,-0.84)(-1.44,-1.08)(1.2237,-1.32)(-0.897,-1.56)(0,-1.8)


\psline[linestyle=dotted, linewidth=0.4pt](-2,1.8)(2,1.8)
\psline[linestyle=dotted, linewidth=0.4pt](-2,1.56)(2,1.56)
\psline[linestyle=dotted, linewidth=0.4pt](-2,1.32)(2,1.32)
\psline[linestyle=dotted, linewidth=0.4pt](-2,1.08)(2,1.08)
\psline[linestyle=dotted, linewidth=0.4pt](-2,0.84)(2,0.84)
\psline[linestyle=dotted, linewidth=0.4pt](-2,0.6)(2,0.6)
\psline[linestyle=dotted, linewidth=0.4pt](-2,0.36)(2,0.36)
\psline[linestyle=dotted, linewidth=0.4pt](-2,0.12)(2,0.12)
\psline[linestyle=dotted, linewidth=0.4pt](-2,-1.8)(2,-1.8)
\psline[linestyle=dotted, linewidth=0.4pt](-2,-1.56)(2,-1.56)
\psline[linestyle=dotted, linewidth=0.4pt](-2,-1.32)(2,-1.32)
\psline[linestyle=dotted, linewidth=0.4pt](-2,-1.08)(2,-1.08)
\psline[linestyle=dotted, linewidth=0.4pt](-2,-0.84)(2,-0.84)
\psline[linestyle=dotted, linewidth=0.4pt](-2,-0.6)(2,-0.6)
\psline[linestyle=dotted, linewidth=0.4pt](-2,-0.36)(2,-0.36)
\psline[linestyle=dotted, linewidth=0.4pt](-2,-0.12)(2,-0.12)

\rput(2.2, 1.8){\tiny $-8$}
\rput(2.2, 1.56){\tiny $-7$}
\rput(2.2, 1.32){\tiny $-6$}
\rput(2.2, 1.08){\tiny $-5$}
\rput(2.2, 0.84){\tiny $-4$}
\rput(2.2, 0.6){\tiny $-3$}
\rput(2.2, 0.36){\tiny $-2$}
\rput(2.2, 0.12){\tiny $-1$}
\rput(2.2, -0.12){\tiny $1$}
\rput(2.2, -0.36){\tiny $2$}
\rput(2.2, -0.6){\tiny $3$}
\rput(2.2, -0.84){\tiny $4$}
\rput(2.2, -1.08){\tiny $5$}
\rput(2.2, -1.32){\tiny $6$}
\rput(2.2, -1.56){\tiny $7$}
\rput(2.2, -1.8){\tiny $8$}

\psline(4,0.9)(4, -0.9)
\psline(8.5,0.9)(8.5,-0.9)
\psline(4.6,0.9)(4.6,-0.9)
\psline(5.2,0.9)(5.2,-0.9)
\pscurve(6.4,0.9)(6.55,0)(6.4,-0.9)
\pscurve(7,0.9)(6.85,0)(7,-0.9)
\psline(7.6,0.9)(7.6,-0.9)

\psline[linecolor=white, linewidth=2pt](4.3, 0.9)(6.7,-0.9)
\psline[linecolor=red](4.3, 0.9)(6.7,-0.9)
\psline[linecolor=white, linewidth=2pt](4.3, -0.9)(6.7,0.9)
\psline[linecolor=red](4.3, -0.9)(6.7,0.9)

\psline[linecolor=white, linewidth=2pt](5.8, 0.9)(8.2,-0.9)
\psline[linecolor=red](5.8, 0.9)(8.2,-0.9)
\psline[linecolor=white, linewidth=2pt](5.8, -0.9)(8.2,0.9)
\psline[linecolor=red](5.8, -0.9)(8.2,0.9)

\psline[linecolor=white, linewidth=2pt](4.9,0.9)(4.9,-0.9)
\psline(4.9,0.9)(4.9,-0.9)

\pscurve[linecolor=white, linewidth=2pt](5.5,0.9)(5.65,0)(5.5,-0.9)
\pscurve(5.5,0.9)(5.65,0)(5.5,-0.9)

\pscurve[linecolor=white, linewidth=2pt](6.1,0.9)(5.95,0)(6.1,-0.9)
\pscurve(6.1,0.9)(5.95,0)(6.1,-0.9)

\psline[linecolor=white, linewidth=2pt](7.3,0.9)(7.3,-0.9)
\psline(7.3,0.9)(7.3,-0.9)

\psline[linecolor=white, linewidth=2pt](7.9,0.9)(7.9,-0.9)
\psline(7.9,0.9)(7.9,-0.9)

\psdots[dotsize=2.5pt](4,0.9)(4.3,0.9)(4.6,0.9)(4.9,0.9)(5.2,0.9)(5.5,0.9)(5.8,0.9)(6.1,0.9)(6.4,0.9)(6.7,0.9)(7,0.9)(7.3,0.9)(7.6,0.9)(7.9,0.9)(8.2,0.9)(8.5,0.9)

\psdots[dotsize=2.5pt](4,-0.9)(4.3,-0.9)(4.6,-0.9)(4.9,-0.9)(5.2,-0.9)(5.5,-0.9)(5.8,-0.9)(6.1,-0.9)(6.4,-0.9)(6.7,-0.9)(7,-0.9)(7.3,-0.9)(7.6,-0.9)(7.9,-0.9)(8.2,-0.9)(8.5,-0.9)

\end{pspicture}

\begin{pspicture}(-2,-1.92)(10,1.92)
\pscircle[linecolor=gray, linewidth=0.2pt](0,0){1.8}

\psline[linecolor=red]{<-}(0,0.3)(-0.88,-1.53)
\pscurve[linecolor=red]{->}(-0.04,0.36)(-0.7,-0.36)(-0.905,-1.5)

\psline[linecolor=red]{<-}(0,-0.3)(0.88,1.53)
\pscurve[linecolor=red]{->}(0.04,-0.36)(0.7,0.36)(0.905,1.5)

\psdots(0,1.8)(0.897,1.56)(-1.2237,1.32)(1.44,1.08)(-1.59197,0.84)(1.697,0.6)(0,0.36)(1.796,0.12)(-1.796,-0.12)(0,-0.36)(-1.697,-0.6)(1.59197,-0.84)(-1.44,-1.08)(1.2237,-1.32)(-0.897,-1.56)(0,-1.8)


\psline[linestyle=dotted, linewidth=0.4pt](-2,1.8)(2,1.8)
\psline[linestyle=dotted, linewidth=0.4pt](-2,1.56)(2,1.56)
\psline[linestyle=dotted, linewidth=0.4pt](-2,1.32)(2,1.32)
\psline[linestyle=dotted, linewidth=0.4pt](-2,1.08)(2,1.08)
\psline[linestyle=dotted, linewidth=0.4pt](-2,0.84)(2,0.84)
\psline[linestyle=dotted, linewidth=0.4pt](-2,0.6)(2,0.6)
\psline[linestyle=dotted, linewidth=0.4pt](-2,0.36)(2,0.36)
\psline[linestyle=dotted, linewidth=0.4pt](-2,0.12)(2,0.12)
\psline[linestyle=dotted, linewidth=0.4pt](-2,-1.8)(2,-1.8)
\psline[linestyle=dotted, linewidth=0.4pt](-2,-1.56)(2,-1.56)
\psline[linestyle=dotted, linewidth=0.4pt](-2,-1.32)(2,-1.32)
\psline[linestyle=dotted, linewidth=0.4pt](-2,-1.08)(2,-1.08)
\psline[linestyle=dotted, linewidth=0.4pt](-2,-0.84)(2,-0.84)
\psline[linestyle=dotted, linewidth=0.4pt](-2,-0.6)(2,-0.6)
\psline[linestyle=dotted, linewidth=0.4pt](-2,-0.36)(2,-0.36)
\psline[linestyle=dotted, linewidth=0.4pt](-2,-0.12)(2,-0.12)

\rput(2.2, 1.8){\tiny $-8$}
\rput(2.2, 1.56){\tiny $-7$}
\rput(2.2, 1.32){\tiny $-6$}
\rput(2.2, 1.08){\tiny $-5$}
\rput(2.2, 0.84){\tiny $-4$}
\rput(2.2, 0.6){\tiny $-3$}
\rput(2.2, 0.36){\tiny $-2$}
\rput(2.2, 0.12){\tiny $-1$}
\rput(2.2, -0.12){\tiny $1$}
\rput(2.2, -0.36){\tiny $2$}
\rput(2.2, -0.6){\tiny $3$}
\rput(2.2, -0.84){\tiny $4$}
\rput(2.2, -1.08){\tiny $5$}
\rput(2.2, -1.32){\tiny $6$}
\rput(2.2, -1.56){\tiny $7$}
\rput(2.2, -1.8){\tiny $8$}

\psline(4,0.9)(4, -0.9)
\psline(8.5,0.9)(8.5,-0.9)
\psline(4.6,0.9)(4.6,-0.9)
\psline(5.2,0.9)(5.2,-0.9)
\pscurve(6.4,0.9)(6.55,0)(6.4,-0.9)
\pscurve(7,0.9)(6.85,0)(7,-0.9)
\psline(7.6,0.9)(7.6,-0.9)

\psline[linecolor=white, linewidth=2pt](5.8, 0.9)(8.2,-0.9)
\psline[linecolor=red](5.8, 0.9)(8.2,-0.9)
\psline[linecolor=white, linewidth=2pt](5.8, -0.9)(8.2,0.9)
\psline[linecolor=red](5.8, -0.9)(8.2,0.9)

\psline[linecolor=white, linewidth=2pt](4.3, 0.9)(6.7,-0.9)
\psline[linecolor=red](4.3, 0.9)(6.7,-0.9)
\psline[linecolor=white, linewidth=2pt](4.3, -0.9)(6.7,0.9)
\psline[linecolor=red](4.3, -0.9)(6.7,0.9)

\psline[linecolor=white, linewidth=2pt](4.9,0.9)(4.9,-0.9)
\psline(4.9,0.9)(4.9,-0.9)

\pscurve[linecolor=white, linewidth=2pt](5.5,0.9)(5.65,0)(5.5,-0.9)
\pscurve(5.5,0.9)(5.65,0)(5.5,-0.9)

\pscurve[linecolor=white, linewidth=2pt](6.1,0.9)(5.95,0)(6.1,-0.9)
\pscurve(6.1,0.9)(5.95,0)(6.1,-0.9)

\psline[linecolor=white, linewidth=2pt](7.3,0.9)(7.3,-0.9)
\psline(7.3,0.9)(7.3,-0.9)

\psline[linecolor=white, linewidth=2pt](7.9,0.9)(7.9,-0.9)
\psline(7.9,0.9)(7.9,-0.9)

\psdots[dotsize=2.5pt](4,0.9)(4.3,0.9)(4.6,0.9)(4.9,0.9)(5.2,0.9)(5.5,0.9)(5.8,0.9)(6.1,0.9)(6.4,0.9)(6.7,0.9)(7,0.9)(7.3,0.9)(7.6,0.9)(7.9,0.9)(8.2,0.9)(8.5,0.9)

\psdots[dotsize=2.5pt](4,-0.9)(4.3,-0.9)(4.6,-0.9)(4.9,-0.9)(5.2,-0.9)(5.5,-0.9)(5.8,-0.9)(6.1,-0.9)(6.4,-0.9)(6.7,-0.9)(7,-0.9)(7.3,-0.9)(7.6,-0.9)(7.9,-0.9)(8.2,-0.9)(8.5,-0.9)

\end{pspicture}

\caption{Two diagrams $N_t$ for $t=(2,-7)(-2,7)$ and the corresponding Artin braids $\beta_t$. Note that the two Artin braids on the right are equal in the quotient $\widetilde{A}_{B_n}$.}
\label{figure:ref3}

\end{figure}

We now generalize the above picturial process, by associating a (possibly non unique) noncrossing diagram $N_x$ and an Artin braid $\beta_x$ to \textit{every} $x\in\NC(W_{D_n}, c)$. 

\subsubsection{Pictures for noncrossing partitions}\label{pic:all}

To obtain a noncrossing diagram $N_x$ with oriented curvilinear polygons from $x$ as we did for reflections in the previous section, we proceed as follows: we orient every polygon of the noncrossing partition in counterclockwise order (note that this is the opposite orientation to the one given by the corresponding cycle of $x$, that is, an arrow $j_2\rightarrow j_1$ means that the cycle of $x$ sends $j_1$ to $j_2$; hence this orientation corresponds to $x^{-1}$). Polygons reduced to a single edge are replaced by curvilinear polygons with two edges as we did for reflections in Section~\ref{pic:ref}. Again we split the points with labels $\pm i_1$ into two points with labels $-i_1$ and $i_1$ respectively as in Figure~\ref{figure:ordre}. 

In the case where the middle point in the Athanasiadis-Reiner model has no edge starting at it and does not lie inside a symmetric polygon, then the two points $i_1$ and $-i_1$ have no edge starting at them in the new diagram. In the case where there are two distinct polygons $P$ and $-P$ sharing the middle point, they are separated so
 that each point lies in the correct curvilinear polygon (see Figure~\ref{figure:split}): there might be several non-isotopic diagrams which work when separating $P$
  from $-P$ (in case $P$ is a $2$-cycle we precisely get what we already noticed and explained in Figure~\ref{figure:ref3}).  
A similar argument to the one given in Figure~\ref{figure:ref3} shows that the images in $\Ab$ of the various Artin braids $\beta_x$ obtained from the distinct diagrams $N_x$ at the end of the process explained below will be equal. In case there is a symmetric polygon $P=-P$ or a factor $[j][i_1]$ in $x$, we add a curvilinear polygon with two edges joining $-i_1$ to $i_1$, oriented in counterclockwise order (Recall that in the noncrossing representation of $[j][i_1]$, the factor $[j]$ is already represented by a curvilinear ``polygon'' with two edges and the point $i_1$ inside it. Here we orient this polygon in counterclockwise order as in all other cases).   

\begin{figure}[h!]

\begin{tabular}{cc}

\begin{pspicture}(-2,-1.92)(2.5,1.8)
\pscircle[linecolor=gray, linewidth=0.2pt](0,0){1.8}

\psline[linecolor=red](0.897,1.56)(1.44,1.08)(0,0)(0.897,1.56)
\psline[linecolor=red](-0.897,-1.56)(-1.44,-1.08)(0,0)(-0.897,-1.56)
\psline[linecolor=red](0,1.8)(-1.796,-0.12)
\psline[linecolor=red](0,-1.8)(1.796,0.12)

\psdots(0,1.8)(0.897,1.56)(-1.2237,1.32)(1.44,1.08)(-1.59197,0.84)(1.697,0.6)(0,0)(1.796,0.12)(-1.796,-0.12)(-1.697,-0.6)(1.59197,-0.84)(-1.44,-1.08)(1.2237,-1.32)(-0.897,-1.56)(0,-1.8)


\psline[linestyle=dotted, linewidth=0.4pt](-2,1.8)(2,1.8)
\psline[linestyle=dotted, linewidth=0.4pt](-2,1.56)(2,1.56)
\psline[linestyle=dotted, linewidth=0.4pt](-2,1.32)(2,1.32)
\psline[linestyle=dotted, linewidth=0.4pt](-2,1.08)(2,1.08)
\psline[linestyle=dotted, linewidth=0.4pt](-2,0.84)(2,0.84)
\psline[linestyle=dotted, linewidth=0.4pt](-2,0.6)(2,0.6)
\psline[linestyle=dotted, linewidth=0.4pt](-2,0.36)(2,0.36)
\psline[linestyle=dotted, linewidth=0.4pt](-2,0.12)(2,0.12)
\psline[linestyle=dotted, linewidth=0.4pt](-2,-1.8)(2,-1.8)
\psline[linestyle=dotted, linewidth=0.4pt](-2,-1.56)(2,-1.56)
\psline[linestyle=dotted, linewidth=0.4pt](-2,-1.32)(2,-1.32)
\psline[linestyle=dotted, linewidth=0.4pt](-2,-1.08)(2,-1.08)
\psline[linestyle=dotted, linewidth=0.4pt](-2,-0.84)(2,-0.84)
\psline[linestyle=dotted, linewidth=0.4pt](-2,-0.6)(2,-0.6)
\psline[linestyle=dotted, linewidth=0.4pt](-2,-0.36)(2,-0.36)
\psline[linestyle=dotted, linewidth=0.4pt](-2,-0.12)(2,-0.12)

\rput(2.2, 1.8){\tiny $-8$}
\rput(2.2, 1.56){\tiny $-7$}
\rput(2.2, 1.32){\tiny $-6$}
\rput(2.2, 1.08){\tiny $-5$}
\rput(2.2, 0.84){\tiny $-4$}
\rput(2.2, 0.6){\tiny $-3$}
\rput(-0.4,0){\tiny $-2$}
\rput(0.4,0){\tiny $2$}
\rput(2.2, 0.12){\tiny $-1$}
\rput(2.2, -0.12){\tiny $1$}
\rput(2.2, -0.6){\tiny $3$}
\rput(2.2, -0.84){\tiny $4$}
\rput(2.2, -1.08){\tiny $5$}
\rput(2.2, -1.32){\tiny $6$}
\rput(2.2, -1.56){\tiny $7$}
\rput(2.2, -1.8){\tiny $8$}
\end{pspicture}

&

\begin{pspicture}(-2,-1.92)(2.5,1.8)
\pscircle[linecolor=gray, linewidth=0.2pt](0,0){1.8}

\psline[linecolor=red]{->}(-0.897,-1.56)(0,0.32)
\pscurve[linecolor=red]{<-}(-1.44,-1.04)(-0.8,-0.35)(0,0.36)
\pscurve[linecolor=red]{<-}(-0.92,-1.53)(-1.15,-1.1)(-1.44,-1.08)

\psline[linecolor=red]{->}(0.897,1.56)(0,-0.32)
\pscurve[linecolor=red]{<-}(1.44,1.04)(0.8,0.35)(0,-0.36)
\pscurve[linecolor=red]{<-}(0.92,1.53)(1.15,1.1)(1.44,1.08)

\pscurve[linecolor=red]{->}(0,1.8)(-1.1,0.85)(-1.796,-0.08)
\pscurve[linecolor=red]{<-}(0.02,1.76)(-0.65,0.85)(-1.78,-0.12)

\pscurve[linecolor=red]{->}(0,-1.8)(1.1,-0.85)(1.796,0.08)
\pscurve[linecolor=red]{<-}(-0.02,-1.76)(0.65,-0.85)(1.78,0.12)

\psdots(0,1.8)(0.897,1.56)(-1.2237,1.32)(1.44,1.08)(-1.59197,0.84)(1.697,0.6)(0,0.36)(1.796,0.12)(-1.796,-0.12)(0,-0.36)(-1.697,-0.6)(1.59197,-0.84)(-1.44,-1.08)(1.2237,-1.32)(-0.897,-1.56)(0,-1.8)


\psline[linestyle=dotted, linewidth=0.4pt](-2,1.8)(2,1.8)
\psline[linestyle=dotted, linewidth=0.4pt](-2,1.56)(2,1.56)
\psline[linestyle=dotted, linewidth=0.4pt](-2,1.32)(2,1.32)
\psline[linestyle=dotted, linewidth=0.4pt](-2,1.08)(2,1.08)
\psline[linestyle=dotted, linewidth=0.4pt](-2,0.84)(2,0.84)
\psline[linestyle=dotted, linewidth=0.4pt](-2,0.6)(2,0.6)
\psline[linestyle=dotted, linewidth=0.4pt](-2,0.36)(2,0.36)
\psline[linestyle=dotted, linewidth=0.4pt](-2,0.12)(2,0.12)
\psline[linestyle=dotted, linewidth=0.4pt](-2,-1.8)(2,-1.8)
\psline[linestyle=dotted, linewidth=0.4pt](-2,-1.56)(2,-1.56)
\psline[linestyle=dotted, linewidth=0.4pt](-2,-1.32)(2,-1.32)
\psline[linestyle=dotted, linewidth=0.4pt](-2,-1.08)(2,-1.08)
\psline[linestyle=dotted, linewidth=0.4pt](-2,-0.84)(2,-0.84)
\psline[linestyle=dotted, linewidth=0.4pt](-2,-0.6)(2,-0.6)
\psline[linestyle=dotted, linewidth=0.4pt](-2,-0.36)(2,-0.36)
\psline[linestyle=dotted, linewidth=0.4pt](-2,-0.12)(2,-0.12)

\rput(2.2, 1.8){\tiny $-8$}
\rput(2.2, 1.56){\tiny $-7$}
\rput(2.2, 1.32){\tiny $-6$}
\rput(2.2, 1.08){\tiny $-5$}
\rput(2.2, 0.84){\tiny $-4$}
\rput(2.2, 0.6){\tiny $-3$}
\rput(2.2, 0.36){\tiny $-2$}
\rput(2.2, 0.12){\tiny $-1$}
\rput(2.2, -0.12){\tiny $1$}
\rput(2.2, -0.36){\tiny $2$}
\rput(2.2, -0.6){\tiny $3$}
\rput(2.2, -0.84){\tiny $4$}
\rput(2.2, -1.08){\tiny $5$}
\rput(2.2, -1.32){\tiny $6$}
\rput(2.2, -1.56){\tiny $7$}
\rput(2.2, -1.8){\tiny $8$}
\end{pspicture}

\end{tabular}

\caption{Splitting of two polygons with common middle point.}
\label{figure:split}

\end{figure}
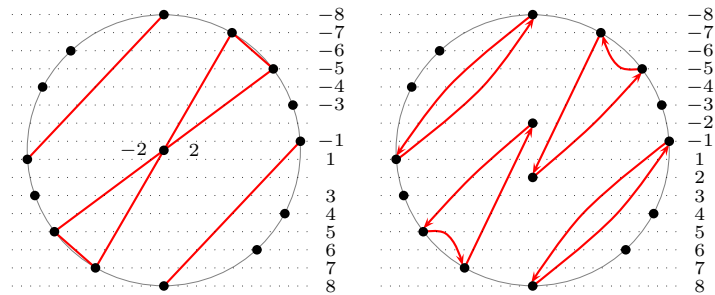 

If one has the diagram $N_x$ with oriented curvilinear polygons as in Figure~\ref{figure:split} on the right, we proceed exactly as we did for reflections in Section~\ref{pic:ref} to obtain $\beta_x$: firstly, we put all the black points on a vertical line and project the noncrossing diagram to obtain a picture as in the left pictures in Figures~\ref{figure:ref1} and \ref{figure:ref2}; this diagram gives the Artin braid $\beta_x$ viewed from the bottom. We illustrate this process for the noncrossing diagram of the element $x_2=((8,7,5))[6,3,1][2]$ of Figure~\ref{figure:ref4} in Figure~\ref{figure:split_braid}. Note that as a consequence of this procedure, the orientation we put on polygons, which as we already noticed at the beginning of the subsection is not the one corresponding to $x$ but to $x^{-1}$, defines the permutation induced by the strands of $\beta_x$. The fact that the permutation induced by the strands of $\beta_x$ is $x^{-1}$ rather than $x$ comes from the fact that our convention is to concatenate Artin braids from top to bottom. 

Note that we can always recover the braid from the middle diagram without ambiguity, because all the strands either strictly go up or down, except possibly in one case: in case $i_1= 2$ and $x=(1,-1)(2,-2)\in \NC(W_{D_n},c)$, then the strands joining $1$ to $-1$ and $-1$ to $1$ do not strictly go up or down. In that case we represent the braid as done in Figure~\ref{figure:1_2} in the next subsection.

\begin{figure}[h!]

\centering
\psscalebox{1.8}{\begin{pspicture}(2,-1.92)(10,1.92)

\pscurve[linecolor=red, linewidth=0.5pt]{->}(3.5,-0.36)(3.8,-0.12)(3.4,0.12)(3.5,0.34)
\pscurve[linecolor=red, linewidth=0.5pt]{<-}(3.5,-0.34)(3.6,-0.12)(3.2,0.12)(3.5,0.36)

\pscurve[linecolor=blue, linewidth=0.5pt]{->}(3.5,0.12)(3.8,0.36)(3.52,0.58)
\pscurve[linecolor=blue, linewidth=0.5pt]{->}(3.5,0.6)(3.8,0.9)(3.35,1.13)(3.5,1.3)
\pscurve[linecolor=blue, linewidth=0.5pt]{->}(3.49, 1.32)(2.8,1.08)(3.65,0.84)(2.8,0.36)(3.47,-0.12)

\pscurve[linecolor=blue, linewidth=0.5pt]{->}(3.5,-0.12)(3.2,-0.36)(3.48,-0.58)
\pscurve[linecolor=blue, linewidth=0.5pt]{->}(3.5,-0.6)(3.2,-0.9)(3.65,-1.13)(3.5,-1.3)
\pscurve[linecolor=blue, linewidth=0.5pt]{->}(3.51,-1.32)(4.2,-1.08)(3.35,-0.84)(4.2,-0.36)(3.53,0.12)

\pscurve[linecolor=red, linewidth=0.5pt]{<-}(3.5,1.11)(3.65,1.32)(3.35,1.56)(3.5,1.8)
\pscurve[linecolor=red, linewidth=0.5pt]{->}(3.5,1.08)(3.9,1.32)(3.52,1.55)
\psline[linecolor=red, linewidth=0.5pt]{->}(3.5,1.56)(3.5,1.77)

\pscurve[linecolor=red, linewidth=0.5pt]{<-}(3.5,-1.11)(3.35,-1.32)(3.65,-1.56)(3.5,-1.8)
\pscurve[linecolor=red, linewidth=0.5pt]{->}(3.5,-1.08)(3.1,-1.32)(3.48,-1.55)
\psline[linecolor=red, linewidth=0.5pt]{->}(3.5,-1.56)(3.5,-1.77)


\psdots[dotsize=1.6pt](3.5,1.8)(3.5,1.56)(3.5,1.32)(3.5,1.08)(3.5,0.84)(3.5,0.6)(3.5,0.36)(3.5,0.12)(3.5,-0.12)(3.5,-0.36)(3.5,-0.6)(3.5,-0.84)(3.5,-1.08)(3.5,-1.32)(3.5,-1.56)(3.5,-1.8)

\rput(2.2, 1.8){\tiny $-8$}
\rput(2.2, 1.56){\tiny $-7$}
\rput(2.2, 1.32){\tiny $-6$}
\rput(2.2, 1.08){\tiny $-5$}
\rput(2.2, 0.84){\tiny $-4$}
\rput(2.2, 0.6){\tiny $-3$}
\rput(2.2, 0.36){\tiny $-2$}
\rput(2.2, 0.12){\tiny $-1$}
\rput(2.2, -0.12){\tiny $1$}
\rput(2.2, -0.36){\tiny $2$}
\rput(2.2, -0.6){\tiny $3$}
\rput(2.2, -0.84){\tiny $4$}
\rput(2.2, -1.08){\tiny $5$}
\rput(2.2, -1.32){\tiny $6$}
\rput(2.2, -1.56){\tiny $7$}
\rput(2.2, -1.8){\tiny $8$}

\psline[linestyle=dotted, linewidth=0.4pt](2.5,1.8)(4.5,1.8)
\psline[linestyle=dotted, linewidth=0.4pt](2.5,1.56)(4.5,1.56)
\psline[linestyle=dotted, linewidth=0.4pt](2.5,1.32)(4.5,1.32)
\psline[linestyle=dotted, linewidth=0.4pt](2.5,1.08)(4.5,1.08)
\psline[linestyle=dotted, linewidth=0.4pt](2.5,0.84)(4.5,0.84)
\psline[linestyle=dotted, linewidth=0.4pt](2.5,0.6)(4.5,0.6)
\psline[linestyle=dotted, linewidth=0.4pt](2.5,0.36)(4.5,0.36)
\psline[linestyle=dotted, linewidth=0.4pt](2.5,0.12)(4.5,0.12)
\psline[linestyle=dotted, linewidth=0.4pt](2.5,-1.8)(4.5,-1.8)
\psline[linestyle=dotted, linewidth=0.4pt](2.5,-1.56)(4.5,-1.56)
\psline[linestyle=dotted, linewidth=0.4pt](2.5,-1.32)(4.5,-1.32)
\psline[linestyle=dotted, linewidth=0.4pt](2.5,-1.08)(4.5,-1.08)
\psline[linestyle=dotted, linewidth=0.4pt](2.5,-0.84)(4.5,-0.84)
\psline[linestyle=dotted, linewidth=0.4pt](2.5,-0.6)(4.5,-0.6)
\psline[linestyle=dotted, linewidth=0.4pt](2.5,-0.36)(4.5,-0.36)
\psline[linestyle=dotted, linewidth=0.4pt](2.5,-0.12)(4.5,-0.12)

\rput(2.2, 1.8){\tiny $-8$}
\rput(2.2, 1.56){\tiny $-7$}
\rput(2.2, 1.32){\tiny $-6$}
\rput(2.2, 1.08){\tiny $-5$}
\rput(2.2, 0.84){\tiny $-4$}
\rput(2.2, 0.6){\tiny $-3$}
\rput(2.2, 0.36){\tiny $-2$}
\rput(2.2, 0.12){\tiny $-1$}
\rput(2.2, -0.12){\tiny $1$}
\rput(2.2, -0.36){\tiny $2$}
\rput(2.2, -0.6){\tiny $3$}
\rput(2.2, -0.84){\tiny $4$}
\rput(2.2, -1.08){\tiny $5$}
\rput(2.2, -1.32){\tiny $6$}
\rput(2.2, -1.56){\tiny $7$}
\rput(2.2, -1.8){\tiny $8$}

\pscurve(6.2,-0.9)(6.5,0.5)(6.2,0.9)

\pscurve[linecolor=red](8.6,0.9)(8.6,0.6)(9.2,-0.9)

\psline[linecolor=red](9.2,0.9)(9.5,-0.9)
\psline[linecolor=white, linewidth=2pt](9.5,0.9)(8.6,-0.9)
\psline[linecolor=red](9.5,0.9)(8.6,-0.9)

\psline[linecolor=white, linewidth=2pt](5.6,0.9)(7.4,-0.9)
\psline[linecolor=blue](5.6,0.9)(7.4,-0.9)

\psline[linecolor=white, linewidth=2pt](6.5,0.9)(5.6,-0.9)
\psline[linecolor=blue](6.5,0.9)(5.6,-0.9)

\psline[linecolor=white, linewidth=2pt](6.8,0.9)(7.7,-0.9)
\psline[linecolor=red](6.8,0.9)(7.7,-0.9)

\psline[linecolor=white, linewidth=2pt](7.4,0.9)(8,-0.9)
\psline[linecolor=blue](7.4,0.9)(8,-0.9)

\psline[linecolor=white, linewidth=2pt](8,0.9)(8.9,-0.9)
\psline[linecolor=blue](8,0.9)(8.9,-0.9)

\psline[linecolor=white, linewidth=2pt](8.9,0.9)(7.1,-0.9)
\psline[linecolor=blue](8.9,0.9)(7.1,-0.9)

\psline[linecolor=white, linewidth=2pt](7.7,0.9)(6.8,-0.9)
\psline[linecolor=red](7.7,0.9)(6.8,-0.9)

\psline[linecolor=white, linewidth=2pt](7.1,0.9)(6.5,-0.9)
\psline[linecolor=blue](7.1,0.9)(6.5,-0.9)

\pscurve[linecolor=white, linewidth=2pt](8.3,0.9)(8,0.5)(8.3,-0.9)
\pscurve(8.3,0.9)(8,0.5)(8.3,-0.9)

\psline[linecolor=white, linewidth=2pt](5,0.9)(5.9,-0.9)
\psline[linecolor=red](5,0.9)(5.9,-0.9)

\pscurve[linecolor=white, linewidth=2pt](5.9,0.9)(5.9,0.6)(5.3,-0.9)
\pscurve[linecolor=red](5.9,0.9)(5.9,0.6)(5.3,-0.9)

\psline[linecolor=white, linewidth=2pt](5.3,0.9)(5,-0.9)
\psline[linecolor=red](5.3,0.9)(5,-0.9)

\psdots[dotsize=2.5pt](5,0.9)(5.3,0.9)(5.6,0.9)(5.9,0.9)(6.2,0.9)(6.5,0.9)(6.8,0.9)(7.1,0.9)(7.4,0.9)(7.7,0.9)(8,0.9)(8.3,0.9)(8.6,0.9)(8.9,0.9)(9.2,0.9)(9.5,0.9)

\psdots[dotsize=2.5pt](5,-0.9)(5.3,-0.9)(5.6,-0.9)(5.9,-0.9)(6.2,-0.9)(6.5,-0.9)(6.8,-0.9)(7.1,-0.9)(7.4,-0.9)(7.7,-0.9)(8,-0.9)(8.3,-0.9)(8.6,-0.9)(8.9,-0.9)(9.2,-0.9)(9.5,-0.9)

\end{pspicture}}

\caption{The Artin braid $\beta_{x_2}$ where $x_2=((8,7,5))[6,3,1][2]$ is as in Figure~\ref{figure:ref4}. The strands corresponding to the cycle $[6,3,1]$ are drawn in blue. Note that it is a Mikado braid in $A_{B_n}$.}
\label{figure:split_braid}

\end{figure}
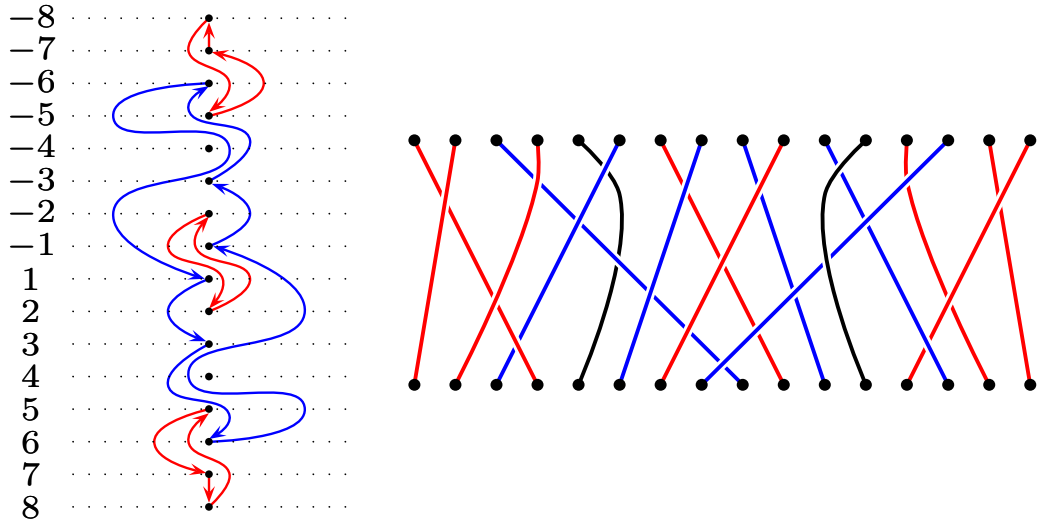

\begin{figure}[h!]

\centering
\psscalebox{1.8}{\begin{pspicture}(2,-1.92)(10,1.92)

\pscurve[linecolor=red, linewidth=0.5pt]{->}(3.5,-0.36)(3.8,-0.12)(3.4,0.12)(3.5,0.34)
\pscurve[linecolor=red, linewidth=0.5pt]{<-}(3.5,-0.34)(3.6,-0.12)(3.2,0.12)(3.5,0.36)

\pscurve[linecolor=red, linewidth=0.5pt]{->}(3.5, -0.12)(3.3, -0.36)(3.5,-0.48)(4, -0.36)(4,-0.1)(3.53,0.12)
\pscurve[linecolor=red, linewidth=0.5pt]{->}(3.5, 0.12)(3.7, 0.36)(3.5,0.48)(3, 0.36)(3,0.1)(3.47,-0.12)

\psdots[dotsize=1.6pt](3.5,1.8)(3.5,1.56)(3.5,1.32)(3.5,1.08)(3.5,0.84)(3.5,0.6)(3.5,0.36)(3.5,0.12)(3.5,-0.12)(3.5,-0.36)(3.5,-0.6)(3.5,-0.84)(3.5,-1.08)(3.5,-1.32)(3.5,-1.56)(3.5,-1.8)

\rput(2.2, 1.8){\tiny $-8$}
\rput(2.2, 1.56){\tiny $-7$}
\rput(2.2, 1.32){\tiny $-6$}
\rput(2.2, 1.08){\tiny $-5$}
\rput(2.2, 0.84){\tiny $-4$}
\rput(2.2, 0.6){\tiny $-3$}
\rput(2.2, 0.36){\tiny $-2$}
\rput(2.2, 0.12){\tiny $-1$}
\rput(2.2, -0.12){\tiny $1$}
\rput(2.2, -0.36){\tiny $2$}
\rput(2.2, -0.6){\tiny $3$}
\rput(2.2, -0.84){\tiny $4$}
\rput(2.2, -1.08){\tiny $5$}
\rput(2.2, -1.32){\tiny $6$}
\rput(2.2, -1.56){\tiny $7$}
\rput(2.2, -1.8){\tiny $8$}

\psline[linestyle=dotted, linewidth=0.4pt](2.5,1.8)(4.5,1.8)
\psline[linestyle=dotted, linewidth=0.4pt](2.5,1.56)(4.5,1.56)
\psline[linestyle=dotted, linewidth=0.4pt](2.5,1.32)(4.5,1.32)
\psline[linestyle=dotted, linewidth=0.4pt](2.5,1.08)(4.5,1.08)
\psline[linestyle=dotted, linewidth=0.4pt](2.5,0.84)(4.5,0.84)
\psline[linestyle=dotted, linewidth=0.4pt](2.5,0.6)(4.5,0.6)
\psline[linestyle=dotted, linewidth=0.4pt](2.5,0.36)(4.5,0.36)
\psline[linestyle=dotted, linewidth=0.4pt](2.5,0.12)(4.5,0.12)
\psline[linestyle=dotted, linewidth=0.4pt](2.5,-1.8)(4.5,-1.8)
\psline[linestyle=dotted, linewidth=0.4pt](2.5,-1.56)(4.5,-1.56)
\psline[linestyle=dotted, linewidth=0.4pt](2.5,-1.32)(4.5,-1.32)
\psline[linestyle=dotted, linewidth=0.4pt](2.5,-1.08)(4.5,-1.08)
\psline[linestyle=dotted, linewidth=0.4pt](2.5,-0.84)(4.5,-0.84)
\psline[linestyle=dotted, linewidth=0.4pt](2.5,-0.6)(4.5,-0.6)
\psline[linestyle=dotted, linewidth=0.4pt](2.5,-0.36)(4.5,-0.36)
\psline[linestyle=dotted, linewidth=0.4pt](2.5,-0.12)(4.5,-0.12)

\rput(2.2, 1.8){\tiny $-8$}
\rput(2.2, 1.56){\tiny $-7$}
\rput(2.2, 1.32){\tiny $-6$}
\rput(2.2, 1.08){\tiny $-5$}
\rput(2.2, 0.84){\tiny $-4$}
\rput(2.2, 0.6){\tiny $-3$}
\rput(2.2, 0.36){\tiny $-2$}
\rput(2.2, 0.12){\tiny $-1$}
\rput(2.2, -0.12){\tiny $1$}
\rput(2.2, -0.36){\tiny $2$}
\rput(2.2, -0.6){\tiny $3$}
\rput(2.2, -0.84){\tiny $4$}
\rput(2.2, -1.08){\tiny $5$}
\rput(2.2, -1.32){\tiny $6$}
\rput(2.2, -1.56){\tiny $7$}
\rput(2.2, -1.8){\tiny $8$}

\psline[linecolor=white, linewidth=2pt](6.8,0.9)(7.7,-0.9)
\psline[linecolor=red](6.8,0.9)(7.7,-0.9)

\pscurve[linecolor=white, linewidth=2pt](7.1,0.9)(7,0.6)(6.75,0)(7.4,-0.9)
\pscurve[linecolor=red](7.1,0.9)(7,0.6)(6.75,0)(7.4,-0.9)

\pscurve[linecolor=white, linewidth=2pt](7.4,0.9)(7.5,0.6)(7.75,0)(7.1,-0.9)
\pscurve[linecolor=red](7.4,0.9)(7.5,0.6)(7.75,0)(7.1,-0.9)

\psline[linecolor=white, linewidth=2pt](7.7,0.9)(6.8,-0.9)
\psline[linecolor=red](7.7,0.9)(6.8,-0.9)

\psline(5,0.9)(5,-0.9)
\psline(5.3, 0.9)(5.3,-0.9)
\psline(5.6, 0.9)(5.6,-0.9)
\psline(5.9,0.9)(5.9,-0.9)
\psline(6.2,0.9)(6.2,-0.9)
\psline(6.5, 0.9)(6.5,-0.9)

\psline(8,0.9)(8,-0.9)
\psline(8.3,0.9)(8.3,-0.9)
\psline(8.6,0.9)(8.6,-0.9)
\psline(8.9,0.9)(8.9,-0.9)
\psline(9.2,0.9)(9.2,-0.9)
\psline(9.5,0.9)(9.5,-0.9)

\psdots[dotsize=2.5pt](5,0.9)(5.3,0.9)(5.6,0.9)(5.9,0.9)(6.2,0.9)(6.5,0.9)(6.8,0.9)(7.1,0.9)(7.4,0.9)(7.7,0.9)(8,0.9)(8.3,0.9)(8.6,0.9)(8.9,0.9)(9.2,0.9)(9.5,0.9)

\psdots[dotsize=2.5pt](5,-0.9)(5.3,-0.9)(5.6,-0.9)(5.9,-0.9)(6.2,-0.9)(6.5,-0.9)(6.8,-0.9)(7.1,-0.9)(7.4,-0.9)(7.7,-0.9)(8,-0.9)(8.3,-0.9)(8.6,-0.9)(8.9,-0.9)(9.2,-0.9)(9.5,-0.9)

\end{pspicture}}

\caption{The Artin braid $\beta_t$ for $t=(1,-1)(2,-2)$ in case $i_1=2$.}
\label{figure:1_2}

\end{figure}
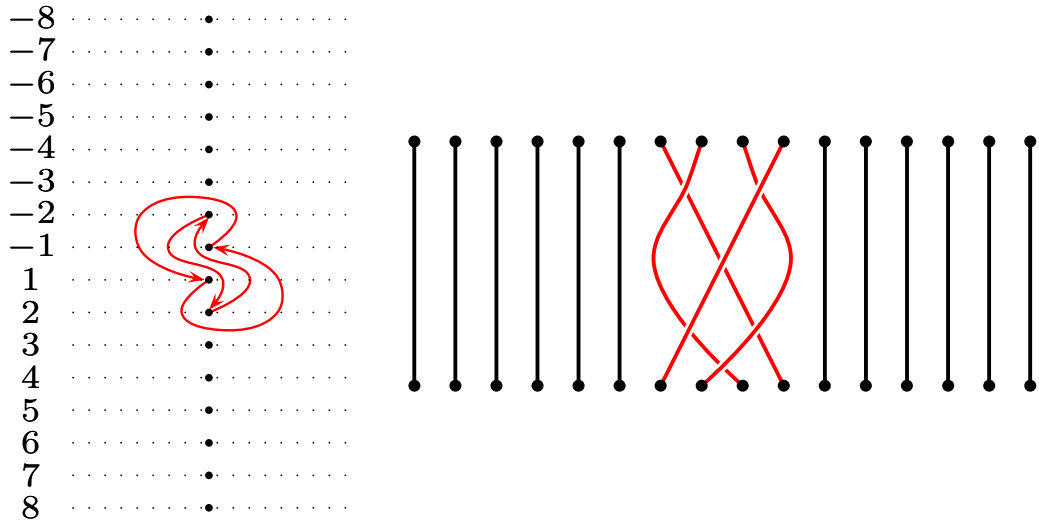

In this way, we associate to every noncrossing partition $x\in\NC(W_{D_n},c)$ an Artin braid $\beta_x\in A_{B_n}$. For some $x$ there are several possible $\beta_x\in A_{B_n}$ as illustrated in Figure~\ref{figure:ref3}, but they have the same image under $\pi_{n,1}$, hence $\pi_{n, 1}(\beta_x)$ is well-defined. We have

\begin{proposition}\label{prop:dual_diagrams}
Let $x\in W_{D_n}$, $t\in T_{D_n}$ such that $x\leq_T xt\leq_T c$. Then $$\pi_{n,1}(\beta_x \beta_{t})=\pi_{n,1}(\beta_{xt}).$$

\end{proposition}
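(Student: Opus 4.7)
The plan is to proceed by a case analysis on how the reflection $t = (a,b)(-a,-b) \in T_{D_n}$ modifies the Athanasiadis--Reiner diagram of $x$: since $t$ is a reflection and $x <_T xt$, multiplication by $t$ performs an elementary move that either merges two cycles of $x$ or splits one cycle into two. In each case I would construct a diagram $N_{xt}$ compatible with the conventions of Subsection~\ref{pic:all} and compare the stacked braid $\beta_x\beta_t$ with $\beta_{xt}$ topologically. Since $N_x$ and $N_{xt}$ agree outside a neighborhood of the polygons touched by $t$, the verification reduces to a purely local picture around the strands labeled by $\pm a,\pm b$ (and possibly $\pm i_1$).

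Enumerating the elementary moves, the list to treat is the following: (i) $a$ and $b$ lie in two distinct paired cycles of $x$, merged into a single paired cycle; (ii) $a$ and $b$ lie in the same paired cycle of $x$, which splits off a balanced cycle (absorbing $\pm i_1$ when appropriate); (iii) $a$ and $b$ lie in two distinct balanced cycles, which merge into a paired cycle, consistent with the parity requirement in $\NC(W_{D_n},c)$; (iv) one of $a,b$ lies in a balanced cycle and the other in a paired cycle. To these one must add the variants where $\pm i_1$ is in the support of $t$, and where $x$ contains a factor of the form $[j][i_1]$ (treated by the special convention of Subsection~\ref{graphical}). For each of these cases, I would draw compatible representatives $N_x$ and $N_{xt}$, project them as in Subsection~\ref{pic:ref} to read off the braids, and verify directly by an isotopy that $\beta_x\beta_t = \beta_{xt}$ inside the local band where $t$ acts. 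In all cases that steer clear of the central points, this already gives the equality in $A_{B_n}$, a fortiori in $\Ab$.

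The main obstacle will be the cases where $t$ interacts with the central points $\pm i_1$, because the splitting conventions introduce genuine ambiguity: as illustrated in Figure~\ref{figure:ref3} and discussed after Figure~\ref{figure:split}, distinct legal diagrams $N_x$ yield Artin braids $\beta_x$ which differ by a factor $\mathbf{s}_0^{\pm 2}$ localized on the two central strands, and which coincide only after applying the quotient relation $\mathbf{s}_0^2 = 1$. In those cases, the stacked braid $\beta_x\beta_t$ will typically fail to be isotopic to $\beta_{xt}$ in $A_{B_n}$, but will differ from $\beta_{xt}$ by such a localized $\mathbf{s}_0^{\pm 2}$; this is exactly why the identity is asserted after applying $\pi_{n,1}$ rather than in $A_{B_n}$ itself. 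The delicate subcase $i_1 = 2$ with $t = (1,-1)(2,-2)$, whose braid is drawn ad hoc in Figure~\ref{figure:1_2}, will require a separate direct check. Collecting all the cases yields the proposition.
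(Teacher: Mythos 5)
Your proposal follows essentially the same route as the paper: reduce to the elementary moves (the cover relations in $\NC(W_{D_n},c)$), verify $\beta_x\beta_t=\beta_{xt}$ by a local isotopy in the band where $t$ acts, and absorb the residual ambiguity at the central points $\pm i_1$ into the relation $\mathbf{s}_0^2=1$ after applying $\pi_{n,1}$ --- which is exactly how the paper argues. The only substantive difference is that the paper takes its case list ready-made from Athanasiadis and Reiner's explicit description of the cover relations as three families, which guarantees exhaustiveness and would correct the orientation slip in your enumeration (for instance, merging two balanced cycles into a paired cycle \emph{decreases} the reflection length, so it describes the passage from $xt$ down to $x$, not from $x$ up to $xt$).
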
         

\begin{proof}

The situation $x\in\NC(W_{D_n},c)$, $t\in T$ and $x\leq_T xt\leq_T c$ precisely corresponds to a cover relation in the noncrossing partition lattice of type $D_n$. These covering relations were described by Athanasiadis and Reiner \cite[Section 3]{AR}: there are three families of covering relations. Setting $y=xt$, we have that $x$ is obtained from $y$ by replacing one or two balanced cycles or one paired cycle as follows: 
$$[j_1, j_2,\dots, j_k]\mapsto [j_1, \dots, j_{\ell}] ((j_{\ell+1}, \dots, j_k)), ~1\leq \ell < k \leq n-1,$$
$$((j_1, j_2,\dots, j_k))\mapsto ((j_1,\dots, j_{\ell}))((j_{\ell+1}, \dots, j_k)), ~1\leq \ell < k \leq n-1,$$
$$[j_1, \dots, j_{\ell}][j_{\ell+1}, \dots, j_k]\mapsto ((j_1, \dots, j_k)), ~1\leq \ell < k \leq n-1.$$

 Note that in the last case, we have either $\ell=1$ and $j_1=\pm i_1$ or $k=\ell+1$ and $j_k=\pm i_1$ since $x$ is a noncrossing partition. Indeed, the noncrossing partition has at most one polygon $P$ with $P=-P$, in which case the middle point lies inside $P$. 

We have to show that the braid that we obtain by the concatenation  $\beta_x\star\beta_t$ has the same image in $\widetilde{A}_{B_n}$ as $\beta_{xt}$. It is easy to deduce from the noncrossing representations $N_x$ what the result of the concatenation of two such braids is. By the process explained above, the noncrossing diagram itself can be considered as an Artin braid, viewed inside a circle or rather a cylinder. An edge of a curvilinear polygon represents a strand, and the orientation indicates the startpoint and the endpoint of that strand. 

Consider the case where the cover relation $xt\mapsto x$ is the first one above, that is, it consists of breaking a symmetric polygon into a symmetric polygon and two opposite cycles. This means that  $xt$ has the two symmetric factors $[i_1]$ and $[j_1,j_2,\dots, j_k]$ while $x$ has the same factors as $xt$ except that the two symmetric factors are replaced by $$[i_1] [j_1, \dots, j_\ell] (( j_{\ell+1}, \dots, j_k))$$

for some $\ell\in \{1, \dots, k-1\}$ and $t=((j_\ell, j_k))$. We have $k\geq 2$. All the other polygons of $x$ and $xt$ have support disjoint from $\{\pm j_1, \ldots , \pm j_k\}$, hence when concatenating $\beta_x\star \beta_t$ it is graphically clear that they will stay unchanged: indeed, these polygons are disjoint from the two curvilinear polygons associated to the reflection $t$.  Hence we can assume that $xt=[i_1][j_1,j_2,\dots, j_k]$ and $x=[i_1] [j_1, \dots, j_\ell] (( j_{\ell+1}, \dots, j_k))$. The situation is depicted in Figure~\ref{figure:cover} below. 

\begin{figure}[h!]

\begin{tabular}{cc}

\psscalebox{1.15}{\begin{pspicture}(-3,-1.92)(2.5,2.2)


\psdots(0.897,1.56)(-1.2237,1.32)(1.44,1.08)(-1.59197,0.84)(1.697,0.6)(0,0.36)(1.796,0.12)(-1.796,-0.12)(0,-0.36)(-1.697,-0.6)(1.59197,-0.84)(-1.44,-1.08)(1.2237,-1.32)(-0.897,-1.56)

\pscurve(-0.897,-1.56)(-0.5,-1.6)(-0.2,-1.6)
\pscurve{->}(0.4,-1.6)(0.7,-1.55)(1.2237,-1.32)
\psline{->}(1.2237,-1.32)(-1.44,-1.08)
\psline{->}(-1.44,-1.08)(-0.897,-1.56)

\pscurve(0.897,1.56)(0.5,1.6)(0.2,1.6)
\pscurve{->}(-0.4,1.6)(-0.7,1.55)(-1.2237,1.32)
\psline{->}(-1.2237,1.32)(1.44,1.08)
\psline{->}(1.44,1.08)(0.897,1.56)

\psline{->}(-1.697,-0.6)(1.59197,-0.84)
\psline(1.59197,-0.84)(1.72,-0.46)
\psline{->}(1.75,-0.2)(1.796,0.12)
\psline{->}(1.796,0.12)(1.697,0.6)
\psline{->}(1.697,0.6)(-1.59197,0.84)
\psline(-1.59197,0.84)(-1.72,0.46)
\psline{->}(-1.75,0.2)(-1.796,-0.12)
\psline{->}(-1.796,-0.12)(-1.697,-0.6)

\pscurve{->}(0,-0.36)(0.25,0)(0,0.36)
\pscurve{->}(0,0.36)(-0.25,0)(0,-0.36)

\pscurve[linecolor=red, linewidth=0.6pt]{->}(-1.36,-1)(0,-1.1)(1.52,-0.9)
\pscurve[linecolor=red, linewidth=0.6pt]{<-}(-1.36,-0.95)(0,-0.9)(1.32,-0.9)

\pscurve[linecolor=red, linewidth=0.6pt]{->}(1.36,1)(0,1.1)(-1.52,0.9)
\pscurve[linecolor=red, linewidth=0.6pt]{<-}(1.36,0.95)(0,0.9)(-1.32,0.9)

\rput(-0.39, 0.36){\tiny $-i_1$}
\rput(-0.3, -0.36){\tiny $i_1$}
\rput(-1.72, 0.34){\tiny ...}
\rput(1.72, -0.34){\tiny ...}
\rput(-0.1, 1.6){\tiny ...}
\rput(0.1, -1.6){\tiny ...}
\rput(1.6, 1.56){\tiny $-j_{k-1}$}
\rput(-1.9, 1.32){\tiny $-j_{\ell+1}$}
\rput(1.8, 1.08){\tiny $-j_k$}
\rput(-2.1, 0.84){\tiny $-j_\ell$}
\rput(2.2, 0.6){\tiny $j_1$}
\rput(2.2, 0.12){\tiny $j_2$}
\rput(-2.2, -0.12){\tiny $-j_2$}
\rput(-2.1, -0.6){\tiny $-j_1$}
\rput(2, -0.84){\tiny $j_\ell$}
\rput(-1.8, -1.08){\tiny $j_k$}
\rput(1.7, -1.32){\tiny $j_{\ell+1}$}
\rput(-1.5, -1.56){\tiny $j_{k-1}$}
\end{pspicture}}

&

\psscalebox{1.15}{\begin{pspicture}(-3,-1.92)(2.5,1.92)


\pscurve(-0.897,-1.56)(-0.5,-1.6)(-0.2,-1.6)
\pscurve{->}(0.4,-1.6)(0.7,-1.55)(1.2237,-1.32)
\psline{->}(-1.44,-1.08)(-0.897,-1.56)

\pscurve(0.897,1.56)(0.5,1.6)(0.2,1.6)
\pscurve{->}(-0.4,1.6)(-0.7,1.55)(-1.2237,1.32)
\psline{->}(1.44,1.08)(0.897,1.56)

\psline{->}(1.2237,-1.32)(1.59197,-0.84)
\psline{->}(-1.2237,1.32)(-1.59197,0.84)
\psline{->}(1.57,0.87)(1.44,1.08)
\psline{->}(-1.57,-0.87)(-1.44,-1.08)

\psline(1.59197,-0.84)(1.72,-0.46)
\psline{->}(1.75,-0.2)(1.796,0.12)
\psline{->}(1.796,0.12)(1.697,0.6)
\psline(-1.59197,0.84)(-1.72,0.46)
\psline{->}(-1.75,0.2)(-1.796,-0.12)
\psline{->}(-1.796,-0.12)(-1.697,-0.6)

\pscurve{->}(0,-0.36)(0.25,0)(0,0.36)
\pscurve{->}(0,0.36)(-0.25,0)(0,-0.36)

\rput(-0.39, 0.36){\tiny $-i_1$}
\rput(-0.3, -0.36){\tiny $i_1$}
\rput(-1.72, 0.34){\tiny ...}
\rput(1.72, -0.34){\tiny ...}
\rput(-0.1, 1.6){\tiny ...}
\rput(0.1, -1.6){\tiny ...}
\rput(1.61, 0.74){\tiny ...}
\rput(-1.61, -0.77){\tiny ...}

\psdots(0.897,1.56)(-1.2237,1.32)(1.44,1.08)(-1.59197,0.84)(1.697,0.6)(0,0.36)(1.796,0.12)(-1.796,-0.12)(0,-0.36)(-1.697,-0.6)(1.59197,-0.84)(-1.44,-1.08)(1.2237,-1.32)(-0.897,-1.56)

\rput(1.6, 1.56){\tiny $-j_{k-1}$}
\rput(-1.9, 1.32){\tiny $-j_{\ell+1}$}
\rput(1.8, 1.08){\tiny $-j_k$}
\rput(-2.1, 0.84){\tiny $-j_\ell$}
\rput(2.2, 0.6){\tiny $j_1$}
\rput(2.2, 0.12){\tiny $j_2$}
\rput(-2.2, -0.12){\tiny $-j_2$}
\rput(-2.1, -0.6){\tiny $-j_1$}
\rput(2, -0.84){\tiny $j_\ell$}
\rput(-1.8, -1.08){\tiny $j_k$}
\rput(1.7, -1.32){\tiny $j_{\ell+1}$}
\rput(-1.5, -1.56){\tiny $j_{k-1}$}
\end{pspicture}}

\end{tabular}

\caption{Concatenating diagrams corresponding to the cover relation $$[i_1] [j_1, \dots, j_k]\mapsto[i_1] [j_1, \dots, j_\ell] (( j_{\ell+1}, \dots, j_k).$$}
\label{figure:cover}

\end{figure}

In the concatenated diagram, the strand starting at $j_1$ first goes to $-j_\ell$ inside $\beta_x$, then the strand starting at $-j_\ell$ goes to $-j_k$ inside $\beta_t$. Hence the result is that the strand starting at $j_1$ goes to $-j_k$, and can be drawn as in the diagram on the right since there is no obstruction for such an isotopy. Similarly, the strand starting at $-j_{\ell+1}$ first goes to $-j_k$, then to $j_{\ell}$, hence is isotopic to the strand which goes directly from $-j_{\ell+1}$ to $-j_{\ell}$ as drawn in the picture on the right. The same happens on the other side, while all other strands stay unchanged. It follows that the result of the concatenation corresponds to the diagram on the right, which is precisely the diagram $N_{xt}$ associated to $xt$.

Hence we have the claim in the case where the cover relation is the one described, with $k\geq 2$. We have to show the same for the other two cover relations. We also treat the case of the last cover relation and leave the second one to the reader. Note that in the case where the cover relation is given by $$[j_1, \dots, j_{\ell}][j_{\ell+1}, \dots, j_k]\mapsto ((j_1, \dots, j_k)),$$

we have either $\ell=1$ and $j_1=\pm i_1$ or $\ell+1=k$ and $j_k=\pm i_1$. Assume that $\ell+1=k$ and $j_k=-i_1$, the case where $j_k=i_1$ as well as the cases where $\ell=1$, $j_1=\pm i_1$ are similar. We have $x=((j_1, \dots, j_\ell, -i_1))$, $t=((j_\ell, i_1))$. In this case, there are two possible diagrams $N_x$ for $x$ and the same holds for $N_t$ (see Figure~\ref{figure:ref3} for an illustration in the case where the noncrossing partition is a reflection). Since the corresponding braids $\beta_x$ obtained from the two different diagrams $N_x$ have the same image under $\pi_{n,1}$ we can choose any diagrams among the two, but the diagram $N_t$ has to be chosen to be compatible with the diagram $N_x$ if we want to do the same proof as for the first cover relation. One of the two situations is represented in Figure~\ref{figure:cover2}. Arguing as in the first case we then get the diagram on the right of the figure for the concatenation $\beta_x \star \beta_t$. This diagram is the diagram $N_{xt}$ up to the orientation of the two curves joining $i_1$ to $-i_1$: but changing their orientation corresponds to inverting a middle crossing in $\beta_{xt}$ which gives rise to a braid which has the same image in $\Ab$ thanks to the relation $\mathbf{s}_0^2=1$. This proves the claim.    

\begin{figure}[h!]

\begin{tabular}{cc}

\psscalebox{1.15}{\begin{pspicture}(-3,-1.92)(2.5,2.2)


\psline[linecolor=red, linewidth=0.6pt]{->}(-0.897,-1.5)(0, -0.41)
\pscurve[linecolor=red, linewidth=0.6pt]{<-}(-0.93,-1.5)(-0.55, -0.8)(-0.1, -0.43)

\psline[linecolor=red, linewidth=0.6pt]{->}(0.897,1.5)(0, 0.41)
\pscurve[linecolor=red, linewidth=0.6pt]{<-}(0.93,1.5)(0.55, 0.8)(0.1, 0.43)

\psdots(0.897,1.56)(-1.2237,1.32)(0.4,-1.6)(-0.4,1.6)(0,0.36)(0,-0.36)(1.2237,-1.32)(-0.897,-1.56)

\pscurve{->}(0, 0.36)(0.42,-0.3)(-0.897,-1.56)
\pscurve{->}(1.2237,-1.32)(0.65,0)(0.03,0.36)

\pscurve{->}(0, -0.36)(-0.42,0.3)(0.897,1.56)
\pscurve{->}(-1.2237,1.32)(-0.65,0)(-0.03,-0.36)

\pscurve{->}(-0.897,-1.56)(-0.5,-1.6)(-0.2,-1.6)
\pscurve{->}(0.4,-1.6)(0.7,-1.55)(1.2237,-1.32)

\pscurve{->}(0.897,1.56)(0.5,1.6)(0.2,1.6)
\pscurve{->}(-0.4,1.6)(-0.7,1.55)(-1.2237,1.32)



\rput(0.5, 0.36){\tiny $-i_1$}
\rput(-0.45, -0.36){\tiny $i_1$}
\rput(-0.1, 1.6){\tiny ...}
\rput(0.1, -1.6){\tiny ...}
\rput(1.5, 1.56){\tiny $-j_{\ell}$}
\rput(-1.8, 1.32){\tiny $-j_{1}$}
\rput(0.4, -1.88){\tiny $j_2$}
\rput(-0.4, 1.88){\tiny $-j_2$}
\rput(1.6, -1.32){\tiny $j_{1}$}
\rput(-1.4, -1.56){\tiny $j_{\ell}$}
\end{pspicture}}

&

\psscalebox{1.15}{\begin{pspicture}(-3,-1.92)(2.5,2.2)


\psdots(0.897,1.56)(-1.2237,1.32)(0,0.36)(0.4,-1.6)(-0.4,1.6)(0,-0.36)(1.2237,-1.32)(-0.897,-1.56)

\psline{->}(1.2237,-1.32)(0.897,1.56)

\psline{->}(-1.2237,1.32)(-0.897,-1.56)

\pscurve{->}(-0.897,-1.56)(-0.5,-1.6)(-0.2,-1.6)
\pscurve{->}(0.4,-1.6)(0.7,-1.55)(1.2237,-1.32)

\pscurve{->}(0.897,1.56)(0.5,1.6)(0.2,1.6)
\pscurve{->}(-0.4,1.6)(-0.7,1.55)(-1.2237,1.32)

\pscurve{<-}(0,-0.36)(0.25,0)(0,0.36)
\pscurve{<-}(0,0.36)(-0.25,0)(0,-0.36)


\rput(0.4, -1.88){\tiny $j_2$}
\rput(-0.4, 1.88){\tiny $-j_2$}
\rput(0.5, 0.36){\tiny $-i_1$}
\rput(-0.45, -0.36){\tiny $i_1$}
\rput(-0.1, 1.6){\tiny ...}
\rput(0.1, -1.6){\tiny ...}
\rput(1.5, 1.56){\tiny $-j_{\ell}$}
\rput(-1.8, 1.32){\tiny $-j_{1}$}
\rput(1.6, -1.32){\tiny $j_{1}$}
\rput(-1.4, -1.56){\tiny $j_{\ell}$}
\end{pspicture}}

\end{tabular}

\caption{Concatenating diagrams corresponding to the cover relation $$[j_1,\dots, j_{\ell}][-i_1]\mapsto ((j_1,\dots, j_\ell, -i_1)).$$}
\label{figure:cover2}

\end{figure}

\end{proof}

\begin{corollary}\label{cor:sdb}
Let $x\in\NC(W_{D_n}, c)$. Then $\pi_{n,1}(\beta_x)=x_c$. 
\end{corollary}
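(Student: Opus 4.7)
My plan is to promote the map on reflections $t \mapsto \pi_{n,1}(\beta_t)$ to a monoid homomorphism out of $B_c^*$ and compare it with the canonical inclusion on simple reflections. Combining the dual braid presentation of $B_c^*$ recalled in Section~\ref{Sub:DualBraid} with the fact that $A_{D_n} = \mathrm{Frac}(B_c^*)$ reduces the corollary to two assertions: (a) for every simple reflection $s \in S_{D_n}$, $\pi_{n,1}(\beta_s) = \mathbf{s}$; and (b) the family $\{\pi_{n,1}(\beta_t) : t \in T_{D_n}\}$ satisfies the dual braid relations in $\Ab$, i.e. $\pi_{n,1}(\beta_t)\pi_{n,1}(\beta_{t'}) = \pi_{n,1}(\beta_{tt't})\pi_{n,1}(\beta_t)$ whenever $tt' \leq_T c$.

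Claim (a) is a direct inspection of the pictorial construction of Section~\ref{pic:ref}: for $i \geq 1$ the simple reflection $s_i = t_i$ yields the symmetric crossing representing $\mathbf{s}_i$, while $t_0 = s_0 s_1 s_0$ is represented by the diagram that lifts to $\mathbf{s}_0\mathbf{s}_1\mathbf{s}_0$, which is exactly the image of $\mathbf{t}_0$ in $\Ab$. For claim (b) fix $t, t' \in T_{D_n}$ with $tt' \leq_T c$; then $t \leq_T tt'$ and $tt't \leq_T tt'$ (both forced by $\lt(tt') = 2$ when $t \neq t'$, and the relation is trivial when $t = t'$), so two applications of Proposition~\ref{prop:dual_diagrams}---first with $x = t$ and reflection $t'$, then with $x = tt't$ and reflection $t$---give
$$\pi_{n,1}(\beta_t)\,\pi_{n,1}(\beta_{t'}) = \pi_{n,1}(\beta_{tt'}) = \pi_{n,1}(\beta_{tt't})\,\pi_{n,1}(\beta_t),$$
which is the desired dual braid relation in $\Ab$.

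With (a) and (b) in hand, Bessis' presentation ensures that $t_c \mapsto \pi_{n,1}(\beta_t)$ extends to a monoid homomorphism $\phi : B_c^* \to \Ab$, and the universal property of the Ore group of fractions extends it further to a group homomorphism $\tilde\phi : A_{D_n} \to \Ab$. By (a), $\tilde\phi$ agrees with the inclusion $\iota_n : A_{D_n} \hookrightarrow \Ab$ on the generating set $\{\mathbf{s} : s \in S_{D_n}\}$, so $\tilde\phi = \iota_n$; in particular $\phi(x_c) = x_c$ inside $\Ab$ for every $x \in \NC(W_{D_n}, c)$.

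Finally, fix a $T$-reduced expression $x = t_1 t_2 \cdots t_k$ of $x \in \NC(W_{D_n}, c)$. Each prefix $t_1 \cdots t_i$ is itself $T$-reduced and bounded above by $x$, hence still $\leq_T c$, so iterating Proposition~\ref{prop:dual_diagrams} one reflection at a time yields
$$\pi_{n,1}(\beta_x) \;=\; \pi_{n,1}(\beta_{t_1})\pi_{n,1}(\beta_{t_2})\cdots\pi_{n,1}(\beta_{t_k}) \;=\; \phi\bigl((t_1)_c (t_2)_c \cdots (t_k)_c\bigr) \;=\; \phi(x_c) \;=\; x_c,$$
which concludes the proof. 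The only genuinely non-formal step is the diagrammatic base case (a); everything else is a formal consequence of Proposition~\ref{prop:dual_diagrams} together with the standard identification $A_{D_n} = \mathrm{Frac}(B_c^*)$.
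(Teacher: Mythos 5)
Your proof is correct, and its skeleton matches the paper's: both arguments rest on the same two inputs, namely that $\pi_{n,1}(\beta_s)=\mathbf{s}$ for $s\in S_{D_n}$ (read off the diagrams) and that the elements $\pi_{n,1}(\beta_t)$, $t\in T_{D_n}$, satisfy the dual braid relations as a consequence of Proposition~\ref{prop:dual_diagrams}, and both conclude by running Proposition~\ref{prop:dual_diagrams} along a $T$-reduced expression of $x$. Where you genuinely diverge is the middle step, establishing $\pi_{n,1}(\beta_t)=t_c$ for every reflection $t$. The paper proves this by induction on $\ell_S(t)$: it chooses a simple reflection $s$ with $st\leq_T c$ or $ts\leq_T c$ and $\ell_S(sts)<\ell_S(t)$ and cancels $\pi_{n,1}(\beta_s)=s_c$ in the dual braid relation; the existence of such an $s$ is only asserted (``this can be seen using the noncrossing representation of $t$''). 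You instead invoke the universal property of Bessis' presentation to obtain a monoid homomorphism $\phi\colon B_c^*\to\Ab$, extend it to the group of fractions $\mathrm{Frac}(B_c^*)\cong A_{D_n}$, and note that the extension agrees with $\iota_n$ on the Artin generators, hence everywhere; this sidesteps the paper's unproved combinatorial claim at the cost of leaning explicitly on the (case-by-case) isomorphism $\mathrm{Frac}(B_c^*)\cong A_{D_n}$ normalized by $s_c\mapsto\mathbf{s}$ --- an identification that the statement of the corollary already presupposes in order to view $x_c$ inside $\Ab$, so nothing extra is really assumed. Your explicit check that $tt'\leq_T c$ with $t\neq t'$ forces $\ell_T(tt')=2$, hence $t\leq_T tt'$ and $tt't\leq_T tt'$, so that Proposition~\ref{prop:dual_diagrams} applies twice to yield the dual braid relation, is a detail the paper leaves implicit and is worth keeping.
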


\begin{proof}
Recall that $S_{D_n}=\{(1,-2)(-1,2)\}\cup\{ (i, i+1)(-i,-i-1)~|~i=1,\dots, n-1\}$. By construction of the braid $\beta_t$ from the diagram $N_t$ we have that $\pi_{n,1}(\beta_s)=\mathbf{s}$ for all $s\in S_{D_n}$, and it is a general fact that $s_c=\mathbf{s}$ for every simple reflection $s$. Hence we have the claim in case $x$ is in $S_{D_n}$ and in particular $\pi_{n,1}(x)$ lies in $A_{D_n}$. Since by Proposition~\ref{prop:dual_diagrams} the elements $\pi_{n,1}(\beta_t)$ with $t\in T_{D_n}$ satisfy the dual braid relations with respect to $c$, we claim that $\pi_{n,1}(\beta_t)=t_c$ for all $t\in T_{D_n}$. Indeed, for all $t\in T_{D_n}$, we can always find $s\in S_{D_n}$ such that either $st\leq_T c$ or $ts\leq_T c$, say, $st\leq_T c$,  and $\ell_S(sts) < \ell_S(t)$ (this can be seen for instance using the noncrossing representation of $t$). It follows that we have the dual braid relation $$\pi_{n,1}(\beta_s)\pi_{n, 1}(\beta_t)=\pi_{n,1}(\beta_{sts}) \pi_{n,1}(\beta_s).$$
Arguing by induction on $\ell_S(t)$, we have that $\pi_{n,1}(\beta_q)=q_c$ for every reflection $q$ occurring in the above equality except possibly $t$. Thanks to the dual braid relation $s_c t_c= (sts)_c s_c$ we get that $\pi_{n,1}(\beta_t)=t_c$ and in particular that $\pi_{n,1}(\beta_t)\in A_{D_n}$. 

Now for $x \in NC(W_{D_n}, c)$ arbitrary we can use Proposition~\ref{prop:dual_diagrams} as well as the fact that  $x\leq_T xt \leq_T c$, $t \in T_{D_n}$, implies that $(xt)_c = x_ct_c$ (see the end of 
Subsection~\ref{Sub:DualBraid}) to get by induction on $\ell_T(x)$ that $\pi_{n,1}(\beta_x)=x_c$.
\end{proof}

\subsection{Simple dual braids are Mikado braids}\label{end}

In all the examples drawn in the figures given in the previous sections, we see that the Artin braids $\beta_x$ resulting from simple dual braids are Mikado braids: they indeed satisfy the topological condition given by the point $(2)$ of Theorem~\ref{thm:dg_b}. This is the main statement which we want to prove here. 

\begin{proposition}\label{beta_mik}
Let $x\in\NC(W_{D_n}, c)$. Then $\beta_x\in A_{B_n}$ is a Mikado braid.  
\end{proposition}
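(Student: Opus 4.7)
The plan is to verify the topological characterization of Theorem~\ref{thm:dg_b}(2) directly on the symmetric $2n$-strand Artin braid representing $\beta_x$. I will exhibit an iterative procedure that removes pairs of symmetric strands, one strand of each pair lying above all other remaining strands (and its mirror therefore below). The argument proceeds by induction on $n$, the base cases being trivial.

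For the inductive step, I locate a peel-off pair in $\beta_x$. First, if some $i \in \{1, \ldots, n\}$ is a fixed point of $x$, that is, $i$ lies in no polygon of $N_x$, then the strands of $\beta_x$ labeled $i$ and $-i$ are unbraided and trivially form such a pair; their removal yields a braid of the form $\beta_{x'}$ for some $x' \in \NC(W_{D_{n-1}}, c')$ with a suitable standard Coxeter element $c'$, and induction applies. Otherwise, every vertex on the circle of $N_x$ belongs to some polygon. Exploiting the noncrossing nature of the polygons, I locate an edge $jj'$ of some polygon $P$ for which one of the two circular arcs between $j$ and $j'$ is free of any other polygon vertex; such an edge exists because one can always descend through the nesting structure of the polygons to find an outermost one. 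Isotoping the chord $jj'$ in $N_x$ close to this empty arc forces the corresponding strand of $\beta_x$ to lie above all the other strands, by the ``right-above-left'' convention from Subsection~\ref{pic:ref}. The $B_n$-symmetric strand $(-j)(-j')$ (which belongs to $-P$, or to $P$ itself in case $P = -P$) then lies below all others, yielding the desired peel-off pair. After removal, the residual braid can be identified with $\beta_{x'}$, where $x'$ is obtained from $x$ by contracting the edge $jj'$ in $P$ together with its mirror, and induction concludes the step.

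The main obstacle is to handle the configurations involving the middle points $\pm i_1$: self-symmetric polygons $P = -P$ (which introduce an auxiliary curvilinear polygon joining $-i_1$ to $i_1$ in $N_x$), polygons whose support contains $\pm i_1$, and factorizations of the shape $[j][i_1]$. In such configurations the ``empty arc'' may run adjacent to the vertical axis of the disk, and one must verify that the peeled-off strand still lies above all others and that the residual braid coincides with $\beta_{x'}$ up to the relation $\mathbf{s}_0^2 = 1$ that holds in $\Ab$ (as already encountered in Figure~\ref{figure:ref3}). This reduces to a finite number of local configurations near the middle of $N_x$, each of which can be checked by a direct pictorial inspection analogous to those carried out in Figures~\ref{figure:ref1}--\ref{figure:cover2}.
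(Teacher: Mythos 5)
Your overall strategy is the same as the paper's: verify condition (2) of Theorem~\ref{thm:dg_b} by repeatedly peeling off a symmetric pair of strands, using the noncrossing structure of $N_x$ to find an ``outermost'' strand. However, your inductive step contains a genuine error. You claim that if $i$ is a fixed point of $x$, then the unbraided strands labelled $i$ and $-i$ ``trivially form such a pair'', i.e.\ that one of them lies above all other strands. This is false in general: an unbraided strand is above exactly those strands whose curves it lies to the right of in the projected (vertical) diagram, and below those it lies to the left of. A fixed point nested inside a polygon of $N_x$ is sandwiched between edges of that polygon, so its strand is above some strands and below others, and the pair $\{i,-i\}$ cannot legally be removed at that stage. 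This is visible already for $x=t=(3,-5)(-3,5)$ in Figure~\ref{figure:ref2}: the strands $\pm 1,\pm 2$ are unbraided but each passes over one strand of the crossing and under the other. Since your induction begins by deleting all fixed-point strands, the very first move of your procedure is in general not a move permitted by Theorem~\ref{thm:dg_b}(2). The paper avoids this by choosing first a curve (going up) with \emph{no other curve to its right}, removing only the single points lying to the right of that curve (these genuinely have nothing to their right, hence are above everything), and only then removing the curve itself.

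Two further points need tightening even after repairing the first step. Your ``empty arc'' criterion must require the arc to be free of \emph{all} marked points (including fixed points), not merely of polygon vertices, and the arc must lie on the correct side relative to the rightward projection; otherwise the isotoped chord still has strands to its right and is not topmost. Finally, your induction relies on identifying the residual braid with $\beta_{x'}$ for some $x'\in\NC(W_{D_{n-1}},c')$ obtained by contracting an edge; this identification is delicate near the middle points $\pm i_1$ (your own last paragraph concedes this) and is not actually needed: the paper's argument stays inside a single vertical diagram, where the two structural facts --- every curve is strictly monotone and distinct curves never cross --- suffice to iterate the peeling without ever re-interpreting the intermediate braids as simple dual braids of lower rank.
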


\begin{proof}

As $\beta_x \in A_{B_n}$, it suffices to verify the point $(2)$ of Theorem~\ref{thm:dg_b}. Note that except in case $x=(1,-1)(2,-2)$ and $i_1=2$ (in which case the braid $\beta_x$ which is drawn in Figure~\ref{figure:1_2} is obviously Mikado), the diagram which we obtained from $N_x$ by putting all the dots on the same vertical line (as done in Figures~\ref{figure:ref1} and \ref{figure:ref2}; we call this diagram a \textit{vertical diagram}) has the following property: each oriented curve joining two points either strictly increases or strictly decreases, and every two such distinct curves never cross. The first property follows from the fact that the diagram is obtained from $N_x$ by projecting to the right a curve which is already either strictly increasing or strictly decreasing, while the second follows from the fact that the polygons in $N_x$ do not cross. 

In such a diagram, consider a curve joining two points and going up with respect to the orientation, with no other curve lying at its right. It follows from the discussion in the paragraph above that it always exists. Every single point lying at the right of such a curve corresponds to a vertical unbraided strand in $\beta_x$ which  lies above all the other strands. Therefore, every such point can be removed in the vertical diagram, and the symmetric point lying at the left of the curve which is symmetric to the original curve can be removed simultaneously: it corresponds to removing a vertical unbraided strand lying above all the other strands in $\beta_x$, and simultaneously removing the symmetric unbraided strand lying below all the other strands, giving a new braid $\beta_x'$ lying in $A_{B_{n-1}}$ since we removed a symmetric pair of strands. After removing all such points in the vertical diagram, the original curve has nothing at its right, hence corresponds to a strand which lies above all the other strands, and we can therefore remove it, as well as its symmetric strand.  Again we obtain an element which lies in an Artin group of type $B_m$ for a smaller $m$.
Going on inductively, we can remove every strand corresponding to a curve, with a braid which stays symmetric at each step. If after removing the last curve we still have points, these correspond to vertical unbraided strands which can be removed. This concludes by Theorem~\ref{thm:dg_b}. We illustrate the above procedure in Example~\ref{ex_proof} below.

\end{proof}

Note that we could define more generally vertical diagrams (not necessarily corresponding to simple dual braids) and associate to them an Artin braid, which would therefore always be Mikado.

\begin{exple}\label{ex_proof}
We illustrate the procedure given in the proof of Proposition~\ref{beta_mik} in case $x$ is the element $x_2=((8,7,5))[6,3,1][2]$ from Figure~\ref{figure:ref4}. The vertical diagram and the braid $\beta_x$ are given in Figure~\ref{figure:split_braid}. The blue curve joining $6$ to $-1$ in the vertical diagram has no other curve lying at its right. There is only the single point $4$, which corresponds in $\beta_x$ to a strand which lies above all the others, with the symmetric strand $-4$ lying below all the others. Removing the pair of strands $4$ and $-4$, we get a symmetric braid on $14$ strands, hence in $A_{B_{7}}$. We can then remove the strand corresponding to the original curve joining $6$ to $-1$ as well as its symmetric strand, since there is no remaining strand lying above it. Going on inductively we eventually remove all pairs of strands. 
\end{exple}

As a corollary we get the main result

\begin{theorem}\label{thm:main}
Let $x\in\NC(W_{D_n},c)$. Then $x_c$ is a Mikado braid. 
\end{theorem}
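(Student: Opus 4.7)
The plan is to obtain the theorem as an immediate consequence of the three main results proved in this section, by a short chaining argument. All of the genuinely technical work (the construction of the braid $\beta_x$ from Athanasiadis--Reiner's diagram, the compatibility of $\pi_{n,1}(\beta_{(\cdot)})$ with the dual braid relations, and the topological Mikado property of $\beta_x$ in $A_{B_n}$) has already been carried out; what remains is to transport the statement from type $B_n$ to type $D_n$ using the characterization provided by Theorem~\ref{thm:mikado_bd}.

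Concretely, given $x\in\NC(W_{D_n},c)$, I would first invoke Corollary~\ref{cor:sdb} to identify the simple dual braid with the image of the explicit graphical construction, namely $\pi_{n,1}(\beta_x)=x_c$. Since $x_c$ lies in $A_{D_n}\subseteq\Ab$ and since the commutative diagram of Lemma~\ref{lem:diag} sends $A_{D_n}$ onto $W_{D_n}\subseteq W_{B_n}$, I would then observe that $\pi_n^B(\beta_x)=\pi_{n,2}(\pi_{n,1}(\beta_x))=\pi_{n,2}(x_c)$ belongs to $W_{D_n}$. Next, Proposition~\ref{beta_mik} provides exactly the second hypothesis required by Theorem~\ref{thm:mikado_bd}: the braid $\beta_x\in A_{B_n}$ is itself a Mikado braid.

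With both hypotheses of Theorem~\ref{thm:mikado_bd} verified, I would apply that theorem to conclude that $\pi_{n,1}(\beta_x)=x_c$ lies in $\mathrm{Mik}(D_n)$, which is the claim. There is no real obstacle at this stage — the subtle point is the symmetry-preserving strand removal underlying Proposition~\ref{beta_mik} (one must be careful that each inductive removal keeps the braid in $A_{B_n}$ rather than merely in $A_{A_{2n-1}}$), but this has already been discharged in the preceding subsection, and no further input is needed here beyond assembling the three cited statements.
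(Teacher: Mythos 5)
Your proposal is correct and follows exactly the same route as the paper: identify $x_c$ with $\pi_{n,1}(\beta_x)$ via Corollary~\ref{cor:sdb}, apply Proposition~\ref{beta_mik} to see that $\beta_x$ is a Mikado braid in $A_{B_n}$, and conclude with Theorem~\ref{thm:mikado_bd}. Your explicit check that $\pi_n^B(\beta_x)=\pi_{n,2}(x_c)=x\in W_{D_n}$ is a small detail the paper leaves implicit, and it is verified correctly.
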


\begin{proof}

By Corollary~\ref{cor:sdb} we have that $\pi_{n,1}(\beta_x)=x_c$ for every $x\in\NC(W_{D_n},c)$. But by Proposition~\ref{beta_mik}, $\beta_x$ is a Mikado braid in $A_{B_n}$. Applying Theorem~\ref{thm:mikado_bd} we get that $x_c=\pi_{n,1}(\beta_x)$ is a Mikado braid in $A_{D_n}$. 

\end{proof}

\end{document}